\theoremstyle{plain}
\newtheorem{theorem}{Theorem}
\newtheorem{lemma}[theorem]{Lemma}
\newtheorem{corollary}[theorem]{Corollary}
\newtheorem{proposition}[theorem]{Proposition}
\newtheorem{claim}[theorem]{Claim}
\theoremstyle{definition}
\newtheorem{definition}[theorem]{Definition}
\theoremstyle{remark}
\newcommand{\BR}{\mathbb R}
\newcommand{\MA}{\mathcal{A}}
\newcommand{\MB}{\mathcal{B}}
\newcommand{\MC}{\mathcal{C}}
\newcommand{\MF}{\mathcal{F}}
\newcommand{\MW}{\mathcal{W}}
\newcommand{\BN}{\mathbb N}
\newcommand{\ra}{\rightarrow}
\newcommand{\E}{\mathbb{E}}
\newcommand{\ep}{\epsilon}
\title{\bf Coloring Chains for Compression with Uncertain Priors}
\author{Noah Golowich\\
\small Harvard University\\[-0.8ex] 
\small Cambridge, MA, U.S.A.\\
\small\tt ngolowich@college.harvard.edu\\
}
\date{{\today}\\
\small Mathematics Subject Classifications: 05C15, 05C12}
\begin{document}

\maketitle


\begin{abstract}
Haramaty and Sudan considered the problem of transmitting a message between two people, Alice and Bob, when Alice's and Bob's priors on the message are allowed to differ by at most a given factor. To find a deterministic compression scheme for this problem, they showed that it is sufficient to obtain an upper bound on the chromatic number of a graph, denoted $U(N,s,k)$ for parameters $N,s,k$, whose vertices are nested sequences of subsets and whose edges are between vertices that have similar sequences of sets. In turn, there is a close relationship between the problem of determining the chromatic number of $U(N,s,k)$ and a local graph coloring problem considered by Erd\H{o}s et al. We generalize the results of Erd\H{o}s et al.~by finding bounds on the chromatic numbers of graphs $H$ and $G$ when there is a homomorphism $\phi :H\ra G$ that satisfies a nice property. We then use these results to improve upper and lower bounds on $\chi(U(N,s,k))$.

  \bigskip\noindent \textbf{Keywords:} graph coloring; independent system; source coding
\end{abstract}

\section{Introduction}
\label{sec:introduction}
 We consider the following graph coloring problem. For a positive integer $N$, a {\it chain} of {\it length} $f$ and {\it size} $s$ is a nested sequence of sets $A_0 \subseteq A_1 \subseteq \cdots A_f \subseteq [N]$ with $|A_0| = 1$ and $|A_f| = s$. We denote such a chain by $\langle A_0, A_1, \ldots, A_f \rangle$; if $\alpha$ is the single element of $A_0$, we will also write $\langle \alpha, A_1, \ldots, A_f \rangle$. For a chain $\mathcal A$, given by $\langle A_0, \ldots, A_f \rangle$, $S^1(\mathcal A)$ is defined \cite{haramaty_deterministic_2016} to be the set of all chains $\langle B_0, \ldots, B_{f-1} \rangle$ such that for $0 \leq i \leq f-1$, $A_{i-1} \subseteq B_i \subseteq A_{i+1}$, where $A_{-1} = \emptyset$. Next, for a positive integer $j$ and a positive real number $x$, $\log^{(j)}x$ denotes the base-2 logarithm function iterated $j$ times. Moreover, $\log^*x$ denotes the minimum $j$ such that $\log^{(j)}x \leq 1$.
 Haramaty and Sudan \cite{haramaty_deterministic_2016} showed that for any $k,s \in \BN$ the set of all chains of length $2k$ and size at most $s$ can be colored with $2^{6(s+1)} \cdot \log^{(k)} N$ colors so that for chains $\mathcal A, \mathcal A'$ in this set with $S^1(\mathcal A) \cap S^1(\mathcal A') \neq \emptyset$ and $A_0 \neq A_0'$, $\mathcal A$ and $\mathcal A'$ receive different colors. As we will show in Section \ref{sec:chaingraphs}, this result is equivalent to the fact that the following graph, which we denote by $U(N,s,k)$, has a proper coloring with $2^{6(s+1)} \cdot \log^{(k)} N$ colors: $V(U(N,s,k))$ is the set of all chains of length $k$ and size at most $s$, and
\begin{eqnarray}
E(U(N,s,k)) &=&\{(\langle \alpha, A_1, \ldots, A_k \rangle, \langle \beta, B_1, \ldots, B_k \rangle) \nonumber\\
&&: \alpha \neq \beta, \alpha \in B_1, \beta \in A_1, \ \ \forall 1 \leq i \leq k - 1 : A_i \subseteq B_{i+1}, B_i \subseteq A_{i+1}\}.\nonumber
\end{eqnarray}
In this paper we prove an upper bound on the chromatic number of $U(N,s,k)$ that improves the upper bound found by Haramaty and Sudan when $s,k$ are small compared to $N$, and we also prove a lower bound on the chromatic number of $U(N,s,k)$ that greatly improves upon previous lower bounds. Before doing so, we introduce some notation and explain the motivation behind determining the chromatic number of the graphs $U(N,s,k)$.

{\bf Notation.} We use the following conventions. For a chain $\mathcal A = \ \langle A_0, A_1, \ldots, A_f \rangle$, we let $sz(\MA)$ denote the size of $\mathcal A$, i.e.~$sz(\MA) = |A_f|$. For $N \in \BN$, we let $Chain(N)$ denote the set of all chains $\langle A_0, A_1, \ldots, A_f \rangle$ with $f \in \BN$ and $A_f \subset [N]$. Given a graph $G$, we let $\chi(G)$ denote the chromatic number of $G$, $V(G)$ denote the set of vertices of $G$, $E(G)$ denote the set of edges of $G$, and for $v \in V(G)$, $N(v)$ denote the set of neighbors of $v$ (excluding $v$ itself). All of our graphs have no loops; that is, for $(u,v) \in E(G)$, $u \neq v$. For $\delta \geq 0$, we let $N_\delta(v)$ denote the set of all vertices of distance at most $\delta$ from $v$ (so that, for instance, $N(v) \cup \{v\} = N_1(v)$). For sets $S,T$ and a map $f : S \ra T$, for a subset $H \subset S$, we let $f(H) = \{ f(s) : s \in H\}$. For a graph $G$ and a subset $T \subset V(G)$, we let $G[T]$ be the subgraph of $G$ induced by $T$. We use $\log$ to denote the logarithm base 2 and $\ln$ to denote the natural logarithm.

 \subsection{Motivation}
 \label{sec:motivation}
 The purpose in \cite{haramaty_deterministic_2016} of determining the chromatic number of the graphs described above was to solve the following compression problem: for some finite universe $U$, suppose that Alice is operating under the belief that a message $m$ is chosen from $U$ according to the probability distribution $P$, and that Bob operates under the belief that $m$ is chosen according to the distribution $Q$. Both Alice and Bob know that their distributions $P,Q$ are ``close'' in the sense that they know of some $\Delta  \geq 0$ such that $\max_{m \in U} \left( \max \left( \log_2 \frac{P(m)}{Q(m)}, \log_2 \frac{Q(m)}{P(m)}\right) \right) \leq \Delta$. The smallest such $\Delta$ for which this inequality holds is denoted $\delta(P,Q)$. For $m$ drawn from $U$ according to $P$ (written as $m \sim_P U$), Alice wishes to communicate $m$ to Bob using a number of bits that is as small as possible in expectation. 

Juba et al.~\cite{juba_compression_2011} considered a version of this question when Alice and Bob are allowed to share common random bits, and showed that if so, they can communicate with $H(P) + 2\Delta + O(1)$ bits. Haramaty and Sudan considered this problem when Alice and Bob are not allowed to share common random bits. To state their setup, we let $\mathcal P(U)$ be the space of probability distributions over $U$; an {\it uncertain deterministic compression scheme} is a pair of functions $E : \mathcal P(U) \times U \ra \{0,1\}^* \cup \{\perp\}$ and $D : \mathcal P(U) \times ( \{0,1\}^* \cup \{ \perp\}) \ra U \cup \{\perp\}$ such that for $P,Q \in \mathcal P(U)$ with $\delta(P,Q) \leq \Delta$, $\forall m \in U$, either $E(P,m) = \perp$ (which happens with some small probability) or $D(Q,E(P,m)) = m$. In other words, with high probability $E(P,m) \neq \perp$, meaning that the encoding $E$ does not fail, and Bob can use the decoding function $D$ to recover the message $m$. Moreover, we want the expected length of the encoding $\E_{m \sim_P U} |E(P,m)|$ to be small. If the probability that $E(P,m) = \perp$ is 0, then the compression scheme is said to have no error.

The following compression scheme was introduced in \cite{haramaty_deterministic_2016}: if we let $N = |U|$, then given $P \in \mathcal P(U)$, $m \in U$, let $r = \lfloor - \log_2 P(m) \rfloor$ and $f = 2 \lfloor \log^* N\rfloor - 1$. 
 We now define a chain $\mathcal A$ of length $f$ by setting $A_0 = \{m\}$ and for $1 \leq k \leq f$, $A_k = \{ m' \in [N] \ : \  |\log_2(1/P(m')) - r| \leq k\Delta + 1 \}$. Since $P,Q$ are $\Delta$-close, Bob knows that $-\log_2 Q(m)$ is within $\Delta+1$ of $r$, and in general, for each $m' \in A_k$ with $k \geq 1$, that $-\log_2 Q(m')$ is within $(k+1)\Delta +1$ of $r$. Bob now constructs a chain $\mathcal B$, given by $\langle B_0, \ldots, B_{f-1} \rangle$, such that $B_0 = \{w\}$ for some $w$ with $|\log_2(1/Q(w)) - r| \leq \Delta + 1$, and for $1 \leq k \leq f-1$, $B_k = \{ m' \ : \ |\log_2(1/Q(m')) - r | \leq (k+1)\Delta + 1 \}$. Finally, Bob finds a chain $\mathcal A'$ of length $f$ and size at most $s$ such that $\mathcal B \in S^1(\mathcal A')$. 

 As $\mathcal B \in S^1(\mathcal A)$ as well, we have that $S^1(\mathcal A) \cap S^1(\mathcal A') \neq \emptyset$. Hence, if the set of chains of length $f$ and size at most $s$ can be colored so that for chains $\mathcal A, \mathcal A'$ in this set with $S^1(\mathcal A) \cap S^1(\mathcal A') \neq \emptyset$ and $A_0 \neq A_0'$, $\mathcal A$ and $\mathcal A'$ receive different colors, then Bob can recover the original message $m$ if Alice transmits the color of $\mathcal A$ (along with the integers $s$ and $r$). In particular, Bob only needs to find a chain $\mathcal A'$ as above that has the same color as $\mathcal A$, and then the single element of $A_0'$ is guaranteed to be $m$ \cite{haramaty_deterministic_2016}. Therefore, to minimize the expected length of the encoding, our goal is to color the set of such chains with as few colors as possible subject to the coloring condition above. Recall from above that Haramaty and Sudan showed that the set of chains of length $2k$ and size at most $s$ can be colored with at most $2^{6(s+1)} \cdot \log^{(k)} N$ colors in this way. This leads to an expected length of $2^{\frac{H(P)}{\epsilon} + 2\Delta \log^*N + O(1)}$ (for an error rate of at most $\epsilon$), which is not quite constant in $N$. In order to achieve an encoding of constant size, one possibility is to reduce the number of colors $2^{O(s)} \log^{(k)}N$ to $O(\log^{(k)}N)$ for all $k \leq \log^*N - c$, for some absolute constant $c$. Using this upper bound for $k = \log^*N - c$ immediately gives a constant number of colors, which corresponds to an encoding of constant size. Determining whether or not this is possible motivates our work. 

\subsection{Overview of results}

In Section \ref{sec:indepuppbnd}, we show that the upper bound of $2^{O(s)} \cdot \log^{(k)}N$ on the chromatic number of $U(N,s,k)$ can be improved to $2^{O(2^{2s})} \cdot \log^{(2k)}N$. For any $\ep > 0$, we can further improve the bound to $2^{O(2^{(1+\ep)s})} \cdot \log^{(2k)}N$ for a certain subgraph of $U(N,s,k)$ whose vertices are chains $\langle \alpha, A_1, \ldots, A_\delta \rangle$ where $|A_i|$, $1 \leq i \leq \delta$, grows exponentially with $i$. To obtain these upper bounds, we prove a result relating the chromatic numbers of graphs $G,H$ when there is a graph homomorphism $\phi : H \ra G$:
\begin{theorem}
\label{thm:homoupp}
If  $\chi(G) > 2$, $\phi : H \ra G$ is a graph homomorphism, and $r \in \BN$ such that $|\phi(N(v))| \leq r$ for each $v \in H$, then $\chi(H) \leq \lceil 2^r \log\log \chi(G) \rceil$.
\end{theorem}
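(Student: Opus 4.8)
The plan is to reduce Theorem \ref{thm:homoupp} to a statement purely about ``locally few colors'' and then prove that statement by a bit-decomposition followed by a recursion on the number of colors.

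\emph{Reduction via pull-back.} Fix an optimal proper coloring $c_G : V(G) \to [\chi(G)]$. Since $\phi$ is a homomorphism, $c := c_G \circ \phi$ is a proper coloring of $H$, and for every $v \in V(H)$ we have $c(N(v)) = c_G(\phi(N(v)))$, so $|c(N(v))| \leq |\phi(N(v))| \leq r$. Hence it suffices to prove the following: if $H$ admits a proper coloring with $m$ colors in which every vertex's neighborhood meets at most $r$ color classes, then $\chi(H) \leq \lceil 2^r \log\log m\rceil$; applying this with $m = \chi(G)$ gives the theorem.

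\emph{Bit-decomposition.} Put $d = \lceil \log m\rceil$ and identify the color set with $\{0,1\}^d$. For an edge $uv$ let $p(uv) \in [d]$ be the highest coordinate on which $c(u)$ and $c(v)$ differ, and for $1 \leq p \leq d$ let $H_p = (V(H), \{uv : p(uv) = p\})$. Then $b_p(v) := c(v)_p$ is a proper $2$-coloring of $H_p$, since edges of $H_p$ join vertices that agree above coordinate $p$ and differ at $p$ (indeed each $H_p$ is a disjoint union of complete bipartite graphs). Moreover, writing $P(v) = \{p(vu) : u \in N(v)\}$, the vertex $v$ lies in $H_p$ only for $p \in P(v)$, and $|P(v)| \leq |c(N(v))| \leq r$ because $p(vu)$ is determined by $c(u)$. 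Thus $H$ is a union of $d$ bipartite graphs in which every vertex lies in at most $r$ of them.

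\emph{The combinatorial core.} It remains to show that a union of $d$ bipartite graphs, with each vertex in at most $r$ of them, has chromatic number at most $\lceil 2^r \log d\rceil = \lceil 2^r \log\log m\rceil$. The factor $2^r$ is the cost of recording, for each vertex $v$, the vector $(b_p(v))_{p \in P(v)} \in \{0,1\}^{\leq r}$; this already separates every edge $uv$ whose active index $p(uv)$ occupies the same rank in $P(u)$ and in $P(v)$. The remaining edges require encoding enough about the sets $P(v) \subseteq [d]$, and the key point is that this costs only a further $\lceil\log d\rceil$ colors rather than the naive $\binom{d}{\leq r}$: one recurses on the index set $[d]$, splitting it according to the binary digits of the index in shift-graph fashion, or equivalently one induction on $m$ that passes from an $r$-local proper coloring with $m$ colors to one with about $\log m$ colors while keeping the locality bounded. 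Making this recursion land exactly on $\lceil 2^r \log\log m\rceil$ — controlling how the locality parameter propagates and ensuring that only one logarithm and one factor $2^r$ are accumulated along the way — is the main obstacle, and this is precisely where the generalization of the Erd\H{o}s et al.\ technique is needed. Finally, the hypothesis $\chi(G) > 2$ enters only to guarantee $\log\log\chi(G) > 0$ so that the stated bound is meaningful; if $\chi(G) \leq 2$ then $\chi(H) \leq \chi(G) \leq 2$ trivially via $\phi$.
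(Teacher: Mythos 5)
Your pull-back reduction and bit-decomposition are both sound: pulling back an optimal $\chi(G)$-coloring along $\phi$ gives $H$ a proper $m$-coloring $c$ with $|c(N(v))|\leq r$, where $m=\chi(G)$, and the highest-differing-bit decomposition does partition the edges of $H$ into $d=\lceil\log m\rceil$ classes, each forming a (disjoint union of complete) bipartite graph, with every vertex active in at most $r$ of them. But the step you label ``the combinatorial core'' is the entire content of the theorem, and you do not prove it: you assert that such a union has chromatic number at most $\lceil 2^r\log d\rceil$, sketch two possible recursions, and then explicitly concede that making either recursion close with the correct dependence on $r$ ``is the main obstacle.'' Nothing in your writeup explains why the locality parameter does not degrade when you pass from $m$ colors down to roughly $\log m$ colors, nor why only a single iteration of $\log$ and a single factor of $2^r$ accumulate; those are precisely the points where a naive recursion produces a worse bound. (There is also a small mismatch: $\log d=\log\lceil\log m\rceil$ need not equal $\log\log m$, so even the target of your core lemma does not line up exactly with the claimed bound.)

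By contrast, the paper does not recurse on the number of colors at all. It proves the theorem through the machinery of $(G,n,\mathcal F_{G,\phi})$-independent systems: Lemma~\ref{lem:colorupbound} shows that the existence of such a system forces $\chi(H)\leq n$, and Lemma~\ref{lem:makegraphindep} constructs such a system whenever $\chi(G)\leq 2^k$ and one has an $r$-independent family $A_1,\dots,A_k$ of subsets of $[n]$ in the sense of Kleitman and Spencer (Theorem~\ref{thm:kindep}), by picking $A_{u,v}$ inside the symmetric difference $Y_{\chi(v)}-Y_{\chi(u)}$ of the power-set enumeration. The $r$-independence of the family is what guarantees the nonemptiness conditions in one shot, with no recursion and no need to control how a locality parameter propagates. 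That probabilistic/existential input (Kleitman--Spencer) is exactly what your bit-decomposition approach is missing a substitute for; as written, your proposal is a correct reduction plus an IOU for the theorem's real content.
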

Haramaty and Sudan \cite{haramaty_deterministic_2016} showed that in the context of Theorem \ref{thm:homoupp}, we have that $\chi(H) \leq 2r(r+1) \log \chi(G)$. Thus Theorem \ref{thm:homoupp} is an improvement when $\chi(G)$ is large compared to $r$. Given an arbitrary graph $G$, we may construct a graph $H$ and a homomorphism $\phi$ as in Theorem \ref{thm:homoupp} as follows: we let $V(H) = \{\langle v, S \rangle\}_{v \in V(G), S \subset N(v) \cup \{ v\}, v \in S, |S| \leq r+1}$, and a pair $(\langle v, S\rangle, \langle u, T \rangle) \in E(H)$ if and only if $v \in T$ and $u \in S$ and $u \neq v$. We will call $H$ the {\it restricted neighborhood graph} of $G$ and write $H = RN(G)$. Next, we define the homomorphism $\phi: RN(G) \ra G$ that maps $\langle v, S\rangle$ to $v$. It is clear that $\phi$ is indeed a homomorphism, and moreover that $|\phi(N(\langle v, S \rangle))| \leq r$ for all $\langle v, S \rangle \in V(H)$, as each element of $\phi(N(\langle v, S \rangle))$ must be in $S - \{v\}$.

In Section \ref{sec:lowbnd}, we prove a lower bound of $2^{\frac{s - 2k - 4}{2}} \cdot \log^{(2k)}N + o(1)$ on the chromatic number of $U(N,s,k)$ as $N \ra \infty$. To establish this bound, we prove a result that is similar in nature to Theorem \ref{thm:homoupp}, except it provides a lower bound on $\chi(H)$ in terms of $\chi(G)$. To state this result, we define a graph homomorphism $\phi :H \ra G$ to be {\it complete} if it satisfies the following property: for any $x, z \in V(H)$, if $\phi(z) \in \phi(N(x))$ and $\phi(x) \in \phi(N(z))$, then $(x,z) \in E(H)$. Then we have:
\begin{theorem}
\label{thm:homolow}
Suppose $\phi : H \ra G$ is a complete graph homomorphism such that for any $w \in G$ and neighbors $u_1, \ldots, u_r$ of $w$, there is some $v \in V(H)$ such that $u_1, \ldots, u_r \in \phi(N(v))$ and $\phi(v) = w$. Then if $\chi(H) = n$, we have that $\chi(G) \leq 2^{2n + 2^{n/2^{r-2}}}$.
\end{theorem}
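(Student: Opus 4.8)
Fix a proper coloring $c : V(H) \to [n]$ of $H$; the goal is to build a proper coloring of $G$ with at most $2^{2n + 2^{n/2^{r-2}}}$ colors. For $w \in V(G)$ and a neighbor $u$ of $w$ in $G$, define
$D(w,u) = \{\, c(v) : \phi(v) = w,\ u \in \phi(N(v))\,\} \subseteq [n]$,
the set of $H$-colors of fiber vertices over $w$ that "see" $u$. Two properties should fall out immediately. First, using the hypothesis with the single neighbor $u$, $D(w,u) \neq \emptyset$, and more strongly, for any $r$ neighbors $u_1,\dots,u_r$ of $w$ the hypothesis gives a $v$ with $\phi(v) = w$ and $u_1,\dots,u_r \in \phi(N(v))$, so $c(v) \in \bigcap_{i=1}^r D(w,u_i)$; thus any $r$ of the sets $\{D(w,u)\}_{u}$ (for fixed $w$) have a common element. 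Second, if $w$ and $w'$ are adjacent in $G$, take any $x$ with $\phi(x) = w$ and $w' \in \phi(N(x))$, and any $z$ with $\phi(z) = w'$ and $w \in \phi(N(z))$; then $\phi(z) \in \phi(N(x))$ and $\phi(x) \in \phi(N(z))$, so completeness forces $(x,z) \in E(H)$ and hence $c(x) \neq c(z)$. Since $x,z$ were arbitrary, this shows $D(w,w') \cap D(w',w) = \emptyset$ whenever $(w,w') \in E(G)$. So completeness is exactly the tool that upgrades "the $c$-images over $w$ and over $w'$ disagree'' into genuine disjointness, and $r$-richness is what pins a common color down for $r$ neighbors at once.

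\textbf{The low-degree model case.} If every vertex of $G$ had degree at most $r$, the first property would give a common element of all of $\{D(w,u)\}_{u \in N(w)}$, so we could choose a nonempty $E(w) \subseteq [n]$ with $E(w) \subseteq D(w,u)$ for every neighbor $u$ of $w$. Then for $(w,w') \in E(G)$ we would get $E(w) \cap E(w') \subseteq D(w,w') \cap D(w',w) = \emptyset$, hence $E(w) \neq E(w')$, so $w \mapsto E(w)$ is a proper coloring of $G$ with fewer than $2^n$ colors. The entire difficulty is to discard the degree restriction while keeping the universe from which the certificates are drawn of size only about $2^{n/2^{r-2}}$ (rather than $2^n$), with the extra $2^{2n}$ factor absorbing bookkeeping; this is where $r$ genuinely enters, and it is why the target has one iterated exponential inside another.

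\textbf{General case by iterating on $r$.} For general degrees I would run a recursion on $r$. The family $\{N_\ell(w) := \{u \in N(w) : \ell \in D(w,u)\}\}_{\ell \in [n]}$ covers $N(w)$, and the first property above says every $r$-subset of $N(w)$ lies inside a single part $N_\ell(w)$. The plan is to trade one unit of richness for a splitting of the color set: from an instance with richness $r$ produce one with richness about $r-1$ in which the relevant color set is (a half of) $[n]$, at the cost of a bounded-size auxiliary label recorded per vertex, with property (b) — disjointness across edges of $G$ — preserved at each stage. After $r-2$ such steps the effective color count is about $n/2^{r-2}$ and the richness is $2$; a shift-graph style argument at that base level (this is the source of the \emph{inner} exponential $2^{n/2^{r-2}}$) together with the $E(\cdot)$-type construction produces a proper coloring of $G$ out of a universe of size roughly $2^{n/2^{r-2}}$, and combining it with the $\le 2^{n}\cdot 2^{n}$ worth of accumulated auxiliary labels yields $2^{2n + 2^{n/2^{r-2}}}$ colors.

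\textbf{Main obstacle.} The hard part is the reduction step itself: showing that an "$r$-wise intersecting cover'' can be refined so that the ground set shrinks geometrically in $r$ instead of staying at size $2^n$, and verifying that the cross-edge disjointness property survives every refinement so the final certificate map really is a proper coloring of $G$. I expect the cleanest implementation is not a direct combinatorial extraction but a recursive halving of $[n]$ wired together with a double-shift-graph coloring argument, which is what accounts for the two nested exponentials in the stated bound; getting the constants in the exponents ($2n$ additively, $2^{r-2}$ in the denominator) to come out exactly will require care in how the auxiliary labels are charged.
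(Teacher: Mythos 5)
Your outline has the right shape, and the two properties you isolate for the sets $D(w,u)$ --- the $r$-wise intersection property coming from the richness hypothesis, and the disjointness $D(w,w') \cap D(w',w) = \emptyset$ across edges coming from completeness --- are genuinely the correct local data. The plan ``trade one unit of richness for a halving of the color set, absorb the partition overhead into the additive $2n$ term'' is also the right high-level shape, and matches the structure of the paper's induction. But the one thing you explicitly defer (``the hard part is the reduction step itself'') is the actual content of the proof, and it is not a routine detail: without specifying \emph{how} to split $V(G)$ so that both the ground set halves and the richness drops, the argument does not exist.

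The missing combinatorial idea is the ``type'' partition. Given the independent system $\{A_{u,v}\}_{u<v}$ (equivalently your $D(v,u)$'s for $u<v$), declare a vertex $v$ to be of type $A \subseteq [n]$ with $|A| \le \lfloor n/2 \rfloor$ if some incident $A_{u,v}$ equals $A$ or some incident $A_{v,w}$ equals $[n]-A$. Every edge $(u,v)$ with $u<v$ forces at least one of its endpoints to have a type (whichever of $A_{u,v}$, $[n]-A_{u,v}$ has size $\le n/2$), so the typeless vertices form an independent set; the typed vertices split into at most $2^{n-1}$ classes $T_1,\dots,T_{n'}$. Within each $T_i$ one intersects every incident set with $A_i$, obtaining an independent system over a ground set of size $\lfloor n/2 \rfloor$; crucially, the ``type witness'' $u_v$ must be included in every application of the richness hypothesis, which is exactly what costs one unit of $r$. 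This is what makes the halving and the drop in richness happen \emph{simultaneously}, and it is not implied by the $r$-wise intersection property alone: an $r$-wise intersecting cover of $N(w)$ by subsets of $[n]$ in general has no reason to refine onto a ground set of half the size.

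Your description of the base case is also off. The paper's $r=2$ base does not use a shift-graph argument; it directly colors $G$ by $w \mapsto B_w := \{A_{u,w} : u \in N(w),\ u < w\} \subseteq \mathcal P([n])$, which is proper because for an edge $(v,w)$ with $v<w$ the independence condition (with $S=\{u,w\}$) gives $A_{u,v} - A_{v,w} \neq \emptyset$ for every $u<v$, so $A_{v,w} \notin B_v$ while $A_{v,w} \in B_w$. This yields $2^{2^n}$ colors outright, and the nested exponential $2^{n/2^{r-2}}$ in the final bound is simply this $2^n$ after $r-2$ halvings --- no shift-graph ingredient is needed. Once you have the type partition and this base case in hand, your accounting of the extra $2^{2n}$ factor from the accumulated partition labels is correct and the bound follows.
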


Together, Theorems \ref{thm:homoupp} and \ref{thm:homolow} show that if $\phi : H \ra G$ is a complete graph homomorphism and $r_1,r_2 \in \BN$ such that $|\phi(N(v))| \leq r_1$ for each $v \in V(H)$ and for any $w \in G$ and neighbors $u_1, \ldots, u_{r_2}$ of $w$, there is some $v \in V(H)$ such that $u_1, \ldots, u_{r_2} \in \phi(N(v))$ and $\phi(v) = w$, then
\begin{equation}
\label{eq:r1r2}
2^{2^{\chi(H)/2^{r_1}}} \leq \chi(G) \leq 2^{2\chi(H) + 2^{\chi(H)/2^{r_2-2}}}.
\end{equation}
Note that we will always have $r_1 \geq r_2$ in such a scenario.

Note that for a graph $G$, the homomorphism $\phi : RN(G) \ra G$ as described above is complete: if $\langle v, S \rangle, \langle u, T \rangle \in V(RN(G))$, and $\phi(\langle v, S \rangle) \in \phi(N(\langle u,T\rangle))$ and $\phi(\langle u,T) \in \phi(N(\langle v,S))$, then $v \in T$ and $u \in S$, but $u \neq v$. This clearly implies that $(\langle v,S \rangle, \langle u,T \rangle) \in E(RN(G))$. Note also that for any $w \in G$ and neighbors $u_1, \ldots, u_r$, then the vertex $\langle w, \{ u_1, \ldots, u_r, w\}\rangle$ has the property that $u_1, \ldots, u_r \in \phi(\langle w, \{u_1, \ldots, u_r, w\} \rangle )$ and $\phi(\langle w, \{u_1, \ldots, u_r, w\}\rangle) = w$. Therefore, with $H = RN(G)$, it follows from Theorems \ref{thm:homoupp} and \ref{thm:homolow} that (\ref{eq:r1r2}) holds with $r_1=r_2=r$.

\section{Graph Independence and Upper Bounds}
\label{sec:indepuppbnd}
Many of the results presented in this section are generalizations of analogues proven by Erd\H{o}s et al.~in \cite{erdos_coloring_1986}, and which were discovered independently in \cite{szegedy_locality_1993}. As such, we will make a change in notation and write $U(m,R,\delta)$ instead of $U(N,s,k)$ to be consistent with the notation of \cite{erdos_coloring_1986}. Erd\H{o}s et al.~considered the following question: we say that a graph $G$ has a local $(m,R)$-coloring \cite{erdos_coloring_1986,korner_local_2005,simonyi_local_2006,simonyi_directed_2010,szegedy_locality_1993} if it has a proper coloring with $m$ colors that uses at most $T$ colors in the neighborhood of each vertex (including the vertex itself). Then given that $G$ has a local $(m,R)$-coloring, how can the chromatic number of $G$ be bounded above? Erd\H{o}s et al.~obtained nearly tight bounds on the answer to this question for values of $m,R$ in certain ranges. Many of the results presented below make progress towards answering the following generalization of this question, which Erd\H{o}s et al.~also stated (\cite{erdos_coloring_1986}, Definitions 4.1 and 4.2), and which was also raised by Szegedy and Vishwanathan \cite{szegedy_locality_1993}: we say that $G$ has a local $(m,R,\delta)$-coloring if it has a proper coloring with $m$ colors that uses at most $R$ colors in the distance-$\delta$ neighborhood $N_\delta(v)$ of each vertex $v$. Then for $\delta > 1$, by how much can the upper bound on the chromatic number of $G$ be improved (since if $G$ has a local $(m,R,\delta)$ coloring, then it certainly has a local $(m,R)$-coloring)?

For a graph $G$, we will consider in this section collections $\mathcal F_G$ of pairs $(v,S)$ that satisfy $v\in V(G)$, $S \subset V(G)$, and for each $u \in S$, $(v,u) \in E(G)$. We will be particularly interested in such collections $\mathcal F_G$ that are defined as follows:
for a graph $H$, and a graph homomorphism $\phi : H \ra G$, we define the collection of tuples $\mathcal F_{G,\phi}$ as follows:
$$
\mathcal F_{G,\phi} = \left\{ (\phi(v), \phi(N(v))) \right\}_{v \in H}.
$$

We first make the following definition pertaining to such collections $\mathcal F_G$, which generalizes Definition 4.5 (as well as Definition 1.4) in \cite{erdos_coloring_1986}, as well as an analogous definition in \cite{szegedy_locality_1993}. In doing so, we identify the vertices in $V(G)$ with the integers $\{1, 2, \ldots, |V(G)|\}$. Also, for a set $S \subset V(G)$, we let $\min S = \min_{u \in S} u$.

\begin{definition}
Consider a graph $G$, a collection $\mathcal F_G$ as above, and $n \in \BN$. Then the system of sets $\{A_{u,v}\}_{1 \leq u < v \leq |V(G)|, (u,v) \in E} \subseteq \mathcal P([n])$ is {\it $(G,n,\mathcal F_G)$-independent}, if, for any $(v,S) \in \mathcal F_G$ with $v > \min S$,
$$
\bigcap_{u < v, u \in S, (u,v) \in E} A_{u,v} - \bigcup_{w > v, w \in S, (v,w) \in E} A_{v,w} \neq \emptyset,
$$
and for any $(v,S) \in \mathcal F_G$ with $v < \min S$, we have
$$
[n] - \bigcup_{w > v, w\in S, (v,w) \in E} A_{v,w} \neq \emptyset.
$$
\end{definition}

Now we prove two lemmas which establish a link between the existence of $(G,n,\mathcal F_G)$-independent systems and the chromatic number of graphs, which generalize Lemma 4.4 (as well as Lemma 1.2) in \cite{erdos_coloring_1986}.
\begin{lemma}
\label{lem:colorupbound}
For a graph homomorphism $\phi : H \ra G$, suppose that there is a $(G, n, \mathcal F_{G,\phi})$-independent system. Then $\chi(H) \leq n$.
\end{lemma}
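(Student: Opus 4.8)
The plan is to turn the hypothesised $(G,n,\mathcal{F}_{G,\phi})$-independent system $\{A_{u,v}\}_{u<v,\,(u,v)\in E}$ into an explicit proper $n$-colouring $c\colon V(H)\ra[n]$ of $H$. For a vertex $x\in V(H)$ write $v=\phi(x)$ and $S_x=\phi(N(x))$, so that $(v,S_x)\in\mathcal{F}_{G,\phi}$. Since $\phi$ is a homomorphism into the loopless graph $G$, every $y\in N(x)$ satisfies $(\phi(x),\phi(y))\in E(G)$ and hence $\phi(y)\ne\phi(x)$; in particular $v\notin S_x$, so (using the convention $\min\emptyset=+\infty$) we always have $v\ne\min S_x$. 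I would then split according to the two clauses of the definition of independence. If $v>\min S_x$, pick $c(x)$ to lie in the set $\bigcap_{u<v,\,u\in S_x,\,(u,v)\in E}A_{u,v}\ -\ \bigcup_{w>v,\,w\in S_x,\,(v,w)\in E}A_{v,w}$, which is non-empty by hypothesis. If $v<\min S_x$ (this clause also covers the isolated-vertex case $S_x=\emptyset$), pick $c(x)\in[n]\ -\ \bigcup_{w>v,\,w\in S_x,\,(v,w)\in E}A_{v,w}$, again non-empty by hypothesis. The single feature of this construction I would want to record for later use is that in \emph{both} cases $c(x)\notin\bigcup_{w>v,\,w\in S_x,\,(v,w)\in E}A_{v,w}$.

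Next I would check that $c$ is proper. Let $(x,z)\in E(H)$. Then $(\phi(x),\phi(z))\in E(G)$, so $\phi(x)\ne\phi(z)$, and after possibly swapping the names $x$ and $z$ (the claim $c(x)\ne c(z)$ is symmetric) I may assume $v:=\phi(x)<v':=\phi(z)$. Because $z\in N(x)$ we have $v'\in S_x$, and because $x\in N(z)$ we have $v\in S_z$. From $v\in S_z$ and $v<v'=\phi(z)$ we get $\phi(z)>\min S_z$, so $z$ was coloured by the first clause, and since the ordered pair $(v,v')$ (with $v<v'$, $v\in S_z$, $(v,v')\in E(G)$) is one of the terms of the intersection defining that clause, $c(z)\in A_{v,v'}$. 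On the other hand, $(v,v')$ with $v'>v$, $v'\in S_x$, $(v,v')\in E(G)$ is one of the terms of the union $\bigcup_{w>v,\,w\in S_x,\,(v,w)\in E}A_{v,w}$, and we observed that $c(x)$ avoids this union; hence $c(x)\notin A_{v,v'}$. Therefore $c(x)\ne c(z)$, so $c$ is a proper colouring of $H$ with $n$ colours and $\chi(H)\le n$.

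I do not expect a genuine obstacle; the content is bookkeeping, and the same scheme underlies Lemma~4.4 of \cite{erdos_coloring_1986}. The one point needing care is the orientation convention: the sets $A_{u,v}$ are indexed by ordered pairs $u<v$, and the independence condition for a tuple $(v,S)$ treats the elements of $S$ smaller than $v$ (via an intersection) differently from those larger than $v$ (via a union). The proof lives off exactly this asymmetry — the larger endpoint $z$ of the edge places its colour \emph{inside} $A_{v,v'}$ while the smaller endpoint $x$ keeps its colour \emph{outside} $A_{v,v'}$ — so one must be sure the inequalities $v<v'$, $v'\in S_x$ and $v\in S_z$ are slotted into the correct clauses. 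It is also worth stating explicitly that the degenerate situations ($S_x=\emptyset$, or $v$ happening to equal some extreme element of $S_x$) are subsumed by the two stated clauses, which they are since $v\notin S_x$ forces $v\ne\min S_x$.
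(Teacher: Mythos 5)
Your proposal is correct and follows essentially the same argument as the paper: construct a coloring $c$ by choosing $c(x)$ from the nonempty set guaranteed by independence, then use the asymmetry that the larger endpoint of an edge has its color placed inside $A_{v,v'}$ while the smaller endpoint's color is kept outside. Your version is slightly more explicit than the paper's in verifying which clause colored $z$ (observing $v \in S_z$, $v < v'$ forces $\phi(z) > \min S_z$) and in isolating the invariant that $c(x)$ always avoids $\bigcup_{w>v,\,w\in S_x}A_{v,w}$, but the underlying construction and verification are identical.
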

\begin{proof}
Let $\{A_{u,v}\}_{1 \leq u < v \leq |V(G)|,(u,v) \in E(G)}$ be a $(G,n,\mathcal F_{G,\phi})$-independent system. 
Take any vertex $x \in V(H)$, and if $\phi(x) > \min \phi(N(x))$ and $N(x)$ is nonempty, define
$$
g(x) = \min \left\{ \bigcap_{u \in \phi(N(x)), u < \phi(x)} A_{u,\phi(x)} - \bigcup_{v \in \phi(N(x)),v > \phi(x)} A_{\phi(x),v}\right\},
$$
where the set on the right hand side of the above equation is nonempty by $(G,n,\mathcal F_{G,\phi})$-independence. If $\phi(x) < \min(\phi(N(x))$ or $N(x)$ is empty, then define $$g(x) = \min\left\{ [n] - \bigcup_{v \in \phi(N(x)),v>\phi(x)} A_{\phi(x),v} \right\}.$$ We claim that $g$ is a proper coloring of $H$. To see this, take 2 vertices $x,y \in V(H)$ with $(x,y) \in E(H)$, and suppose without loss of generality that $\phi(x) < \phi(y)$. The fact that $\phi(x) \in \phi(N(y))$ implies that $g(y) \in A_{\phi(x),\phi(y)}$. 
We also have that $\phi(y) \in \phi(N(x))$, so $g(x) \not \in A_{\phi(x),\phi(y)}$, which implies that $g(x) \neq g(y)$, as desired.
\end{proof}

\begin{lemma}
\label{lem:colorlobound}
If $\chi(H) \leq n$, and $\phi : H \ra G$ is a complete graph homomorphism, then there is a
$(G,n,\mathcal F_{G,\phi})$-independent system.
\end{lemma}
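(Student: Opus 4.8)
The plan is to reverse the construction in the proof of Lemma~\ref{lem:colorupbound}: starting from a proper $n$-coloring $c : V(H) \to [n]$, we must build a $(G,n,\mathcal F_{G,\phi})$-independent system $\{A_{u,v}\}$. First I would fix, for each edge $(u,v) \in E(G)$ with $u < v$, the set $A_{u,v}$ by declaring, for each color $t \in [n]$, whether $t \in A_{u,v}$, based on how vertices of $H$ above $u$ and $v$ (in the $\phi$-preimage) are colored. The natural attempt, mirroring the forward direction, is: put $t \in A_{u,v}$ iff there is some $x \in \phi^{-1}(v)$ with $u \in \phi(N(x))$ and $c(x) = t$, \emph{and} no conflicting requirement forces $t$ out. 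The subtle point is that $A_{u,v}$ must simultaneously serve as a ``membership'' constraint when $v$ is the larger endpoint of a tuple $(v,S)$ and as a ``non-membership'' constraint when $v$ is the smaller endpoint of a tuple $(v,S')$, and these two roles must be made consistent.

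Concretely, I would define $A_{u,v}$ (for $u<v$, $(u,v)\in E(G)$) as follows. Say a color $t$ is \emph{$v$-forced-in via $u$} if some $x \in \phi^{-1}(v)$ has $u \in \phi(N(x))$ and $c(x)=t$; say $t$ is \emph{$u$-forced-out via $v$} if some $y \in \phi^{-1}(u)$ has $v \in \phi(N(y))$ and $c(y)=t$. The key claim, which is where completeness of $\phi$ enters, is that these two conditions are mutually exclusive: if $t$ were both $v$-forced-in via $u$ (witnessed by $x$) and $u$-forced-out via $v$ (witnessed by $y$), then $\phi(x)=v \in \phi(N(y))$ and $\phi(y)=u \in \phi(N(x))$, so completeness gives $(x,y)\in E(H)$, yet $c(x)=t=c(y)$, contradicting properness of $c$. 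Given this, I can safely set $A_{u,v}$ to contain every $v$-forced-in color and to exclude every $u$-forced-out color; remaining colors can be assigned to $A_{u,v}$ arbitrarily (say, included).

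With $A_{u,v}$ so defined, I verify the two independence conditions. Take $(v,S)\in\mathcal F_{G,\phi}$ with $v>\min S$, realized as $(\phi(x),\phi(N(x)))$ for some $x\in V(H)$ with $\phi(x)=v$; I claim $c(x)$ lies in the required difference set. For each $u\in S$ with $u<v$ and $(u,v)\in E$, the vertex $x$ witnesses that $c(x)$ is $v$-forced-in via $u$, so $c(x)\in A_{u,v}$; hence $c(x)\in\bigcap_{u\in S, u<v} A_{u,v}$. For each $w\in S$ with $w>v$ and $(v,w)\in E$, the vertex $x\in\phi^{-1}(v)$ with $w=\phi(x')$... more precisely, $x$ witnesses that $c(x)$ is $v$-forced-out via $w$ in the pair $A_{v,w}$ (here $v$ is the smaller endpoint), so by construction $c(x)\notin A_{v,w}$; hence $c(x)\notin\bigcup_{w\in S, w>v}A_{v,w}$. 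Thus $c(x)$ lies in the nonempty difference, as needed. The case $v<\min S$ is the same argument using only the ``forced-out'' half: $c(x)\notin A_{v,w}$ for every relevant $w>v$, so $c(x)\in[n]-\bigcup_{w} A_{v,w}$.

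The main obstacle is getting the two-sided consistency of each $A_{u,v}$ right --- i.e.\ isolating exactly the mutual-exclusivity claim above and recognizing that \emph{completeness} of $\phi$ is precisely what makes it go through; without completeness, a color could be forced both in and out of the same $A_{u,v}$ and no valid system would exist. A secondary bookkeeping point is handling tuples $(v,S)\in\mathcal F_{G,\phi}$ that arise from multiple vertices of $H$ and vertices $x$ with $N(x)=\emptyset$ or $\phi(N(x))=\emptyset$; these are routine, since the definition of $A_{u,v}$ quantifies over all of $\phi^{-1}(v)$ at once and the empty-neighborhood case falls under the second independence condition with an empty union.
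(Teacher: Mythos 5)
Your proposal is correct and takes essentially the same approach as the paper: the paper defines $A_{u,v}$ to be exactly your ``$v$-forced-in via $u$'' set, i.e.~$A_{u,v} = \{\,g(x) : x\in V(H),\ \phi(x)=v,\ u\in\phi(N(x))\,\}$, and its verification is an argument by contradiction that bundles your mutual-exclusivity claim (and the use of completeness) directly into the check of nonemptiness. Your version makes the forced-in/forced-out consistency explicit and allows extra arbitrary colors in $A_{u,v}$, but the underlying mechanism --- using the $n$-coloring of $H$ on $\phi$-preimages, with completeness ruling out a color appearing on both sides of the difference --- is the same.
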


\begin{proof}
Suppose $g$ is a proper $n$-coloring of $H$. We define a $(G,n,\mathcal F_{G,\phi})$-independent system $\{A_{u,v}\}$ as follows. For $(u,v) \in E(G)$, with $1 \leq u < v \leq |V(G)|$, we let
$$
A_{u,v} = \left\{ g(x) \ \  :\ \  x \in V(H), \ \phi(x) = v,\ u \in \phi(N(x))\right\}.
$$
We claim that this system is $(G,n,\mathcal F_{G,\phi})$-independent. For suppose not; there are two possibilities:
\begin{enumerate}
\item There is some $x \in V(H)$, $\phi(x) > \min \phi(N(x))$, such that
$$
\bigcap_{u \in \phi(N(x)), u < \phi(x)} A_{u,\phi(x)} - \bigcup_{v \in \phi(N(x)), v > \phi(x)} A_{\phi(x),v} = \emptyset.
$$
Let $\xi = g(x)$. We claim that
$$
\xi \in \bigcap_{u \in \phi(N(x)), u < \phi(x)} A_{u,\phi(x)}.
$$
To see that this is the case, note that
$$
A_{u,\phi(x)} = \{ g(y) \ \ : \ \ y \in V(H), \phi(y) = \phi(x), u \in \phi(N(y))\},
$$
so that for each $u \in \phi(N(x))$ with $u < \phi(x)$, we may simply choose $y = x$, and always have that $u \in \phi(N(y))$. Since $\phi(x) > \min \phi(N(x))$, there always exists at least one such $u$.
\item There is some $x\in V(H)$, $\phi(x) < \min \phi(N(x))$, such that
$$
[n] - \bigcup_{v \in \phi(N(x)), v>\phi(x)} A_{\phi(x),v} = \emptyset.
$$
Again, let $\xi = g(x)$, so that $\xi \in [n]$.
\end{enumerate}

 In both cases above, there must exist $v > \phi(x)$, with $v \in \phi(N(x))$, such that $\xi \in A_{\phi(x),v}$. In particular, this means that $\xi = g(z)$, for some $z \in V(H)$ with $\phi(z) = v$ and $\phi(x) \in \phi(N(z))$. 
Since $\phi$ is complete, this immediately implies that $(z,x) \in E(H)$, which is a contradiction to the fact that both $x$ and $z$ are colored $\xi$. This completes the proof.
\end{proof}
Lemmas \ref{lem:colorupbound} and \ref{lem:colorlobound} immediately imply the following:
\begin{proposition}
\label{prop:indepequiv}
If $\phi : H \ra G$ is a complete graph homomorphism, then $\chi(H) \leq n$ if and only if there exists a $(G,n,\mathcal F_{G,\phi})$-independent system.
\end{proposition}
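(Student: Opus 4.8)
The plan is to observe that Proposition \ref{prop:indepequiv} is a direct consequence of combining Lemmas \ref{lem:colorupbound} and \ref{lem:colorlobound}, so the proof is essentially a two-line bookkeeping argument, though it is worth spelling out the logical direction carefully. First I would fix a complete graph homomorphism $\phi : H \ra G$ and an integer $n \in \BN$, and argue the two implications separately. For the ``if'' direction, suppose there exists a $(G,n,\mathcal F_{G,\phi})$-independent system; then Lemma \ref{lem:colorupbound}, which requires exactly this hypothesis (and does not even use completeness of $\phi$), gives $\chi(H) \leq n$ immediately. For the ``only if'' direction, suppose $\chi(H) \leq n$; since $\phi$ is assumed complete, Lemma \ref{lem:colorlobound} applies verbatim and produces a $(G,n,\mathcal F_{G,\phi})$-independent system. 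Chaining these two gives the stated equivalence.

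The one subtlety I would flag, and the place where an alert reader might object, is the asymmetry in the hypotheses of the two lemmas: Lemma \ref{lem:colorupbound} does not mention completeness at all, while Lemma \ref{lem:colorlobound} needs it. So in the proposition I would be careful to note that completeness is only genuinely invoked in the forward direction (from a coloring to an independent system); the reverse direction holds for an arbitrary homomorphism. Since the proposition as stated assumes $\phi$ is complete throughout, there is no actual gap, but I would make this explicit so that the reader understands which half of the argument is the ``hard'' one — namely Lemma \ref{lem:colorlobound}, where completeness is precisely what lets us conclude that two vertices $x, z$ of $H$ receiving the same color cannot both satisfy $\phi(z) \in \phi(N(x))$ and $\phi(x) \in \phi(N(z))$ without being adjacent, contradicting properness of the coloring.

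In terms of the main obstacle: there really is none at the level of the proposition itself, since all the work has already been done in the two lemmas; the ``proof'' is just a statement that the two implications compose. If I were writing it out I would keep it to a single short paragraph: ``The forward implication is Lemma \ref{lem:colorlobound} (using that $\phi$ is complete), and the backward implication is Lemma \ref{lem:colorupbound}.'' The only thing to double-check is that the quantifier over $n$ matches — both lemmas are stated for a fixed but arbitrary $n$, so the biconditional holds for each $n$, which is exactly what the proposition asserts.

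\begin{proof}
Both directions follow immediately from the preceding lemmas applied with the same value of $n$. If there exists a $(G,n,\mathcal F_{G,\phi})$-independent system, then $\chi(H) \leq n$ by Lemma \ref{lem:colorupbound}. Conversely, since $\phi$ is a complete graph homomorphism, if $\chi(H) \leq n$ then Lemma \ref{lem:colorlobound} guarantees the existence of a $(G,n,\mathcal F_{G,\phi})$-independent system.
\end{proof}
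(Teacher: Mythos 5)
Your proof is correct and matches the paper exactly: the paper simply notes that Lemmas \ref{lem:colorupbound} and \ref{lem:colorlobound} immediately imply the proposition, with no further argument. Your observation that completeness is needed only in the direction handled by Lemma \ref{lem:colorlobound} is accurate and a reasonable clarification, though the paper leaves it implicit.
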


From Proposition \ref{prop:indepequiv}, in order to prove upper bounds on the chromatic number of a graph $H$, we need to prove the existence of $(G,n,\mathcal F_{G,\phi})$-independent systems for appropriate choices of $\phi,G$. To do so, we will use a result of Kleitman and Spencer \cite{kleitman_families_1973} on the existence of families of independent sets.

\begin{definition}[Kleitman and Spencer, \cite{kleitman_families_1973}]
If $S$ is an $n$-element set, then the $k$ subsets $A_1, \ldots, A_k \subseteq S$ are defined to be {\it $k$-independent} if all $2^k$ intersections $\cap_{j=1}^n B_j$ (where $B_j$ can be either $A_j$ or $\bar A_j$, and where $A_j$ and $\bar A_j$ do not both appear among the $B_j$), are nonempty. More generally, the $m$ subsets $A_1, \ldots, A_m \subseteq S$, for $m \geq k$, are $k$-independent if each $k$-element subset of $\{A_1, \ldots, A_m \}$ is $k$-independent.
\end{definition}
Another way of stating the independence of $A_1, \ldots, A_k$ is that all $2^k$ portions of the Venn diagram relating $A_1, \ldots, A_k$, are nonempty. Kleitman and Spencer defined $f(n,k)$ to the the maximum size of a collection of a $k$-independent collection of subsets of an $n$-element set. Their main result was:
\begin{theorem}[Kleitman and Spencer, \cite{kleitman_families_1973}]
\label{thm:kindep}
We have:
$$
f(n,2) = {n-1 \choose \lfloor n/2 \rfloor -1},
$$
and there are absolute constants $d_1 \geq 1, d_2$ so that for each fixed $k \geq 3$, there is a sufficiently large $N_k$, so that for all $n \geq N_k$,
$$
2^{d_1n2^{-k}/k} \leq f(n,k) \leq 2^{d_2n2^{-k}}.
$$
\end{theorem}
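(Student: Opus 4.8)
\medskip
\noindent\textbf{Proof strategy.} The plan is to treat the theorem in three parts: the lower bounds for all $k\ge 2$, the upper bound for $k\ge 3$, and finally the exact value $f(n,2)=\binom{n-1}{\lfloor n/2\rfloor-1}$, which also supplies the upper bound in the base case.

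For the lower bounds, the $k=2$ case is explicit: fix an element $p\in[n]$ and let $\mathcal A$ be the family of all $\lfloor n/2\rfloor$-subsets of $[n]$ containing $p$. Any two $A,B\in\mathcal A$ satisfy $p\in A\cap B$, satisfy $|A\cup B|\le 2\lfloor n/2\rfloor-1<n$ so that $\overline{A\cup B}\ne\emptyset$, and are incomparable since $|A|=|B|$; hence every pair is $2$-independent and $|\mathcal A|=\binom{n-1}{\lfloor n/2\rfloor-1}$. For $k\ge 3$ I would use a union bound: pick $A_1,\dots,A_m\subseteq[n]$ with each element placed in each $A_i$ independently with probability $1/2$. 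For a fixed $k$-subset $T$ of indices and a fixed sign pattern $\sigma\in\{0,1\}^T$, the probability that no element of $[n]$ realizes $\sigma$ on $T$ is $(1-2^{-k})^n$, and there are at most $\binom{m}{k}2^k\le(2em/k)^k$ such bad events; since $(2em/k)^k(1-2^{-k})^n<1$ as soon as $\ln(2em/k)<n2^{-k}/k$, and $1/\ln 2>1$, the choice $m=\lceil 2^{n2^{-k}/k}\rceil$ yields a $k$-independent family once $n\ge N_k$ for a threshold $N_k=\Theta(k2^k)$. This gives the lower bound with $d_1=1$.

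For the upper bound when $k\ge 3$, the idea is to reduce to the case $k=2$. Let $A_1,\dots,A_m$ be $k$-independent; we may assume $m>k$. Then $A_1,\dots,A_{k-2}$ are $(k-2)$-independent and so cut $[n]$ into $2^{k-2}$ nonempty atoms; let $E$ be a smallest one, so $|E|\le\lfloor n/2^{k-2}\rfloor$. For any two distinct $i,j\in\{k-1,\dots,m\}$, the set $\{A_1,\dots,A_{k-2},A_i,A_j\}$ is a $k$-subset of our family and hence $k$-independent, so inside the atom $E$ the pair $(A_i\cap E,A_j\cap E)$ realizes all four patterns; in particular $A_i\cap E\ne A_j\cap E$, and $\{A_i\cap E:k-1\le i\le m\}$ is a family of $m-(k-2)$ distinct, $2$-independent subsets of $E$. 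Hence $m-(k-2)\le f(|E|,2)\le f(\lfloor n/2^{k-2}\rfloor,2)\le 2^{\lfloor n/2^{k-2}\rfloor-1}$, and absorbing the additive $k-2$ into the threshold gives $f(n,k)\le 2^{n/2^{k-2}}=2^{4n\cdot 2^{-k}}$ for $n\ge N_k$, so $d_2=4$ works.

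It remains to prove $f(n,2)=\binom{n-1}{\lfloor n/2\rfloor-1}$, and I expect its upper half to be the main obstacle. The key observation is that a $2$-independent family $\mathcal A$ with $|\mathcal A|\ge 2$ is an antichain that is intersecting and in which no two sets have union $[n]$; from these three properties one checks that $\mathcal B:=\mathcal A\cup\{[n]\setminus A:A\in\mathcal A\}$ is again an antichain, is closed under complementation, and satisfies $|\mathcal B|=2|\mathcal A|$. One then bounds the size of a complementation-closed antichain in $2^{[n]}$ by $2\binom{n-1}{\lfloor n/2\rfloor-1}$: for even $n$ this is immediate from Sperner's theorem, since the middle level is itself complementation-closed; for odd $n$ it needs more, because Sperner/LYM only yields the weaker bound $\tfrac12\binom{n}{\lfloor n/2\rfloor}$, and one must instead reduce (via a compression/shifting argument) to bounding a maximum intersecting family of $\tfrac{n-1}{2}$-subsets and invoke Erd\H{o}s--Ko--Rado. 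Handling this odd case carefully, together with the bookkeeping of the thresholds $N_k$ in the first two parts, is where the real work lies.
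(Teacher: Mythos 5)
This theorem is quoted in the paper directly from Kleitman and Spencer \cite{kleitman_families_1973} and is not proved there, so there is no in-paper argument against which to compare your proposal. That said, your sketch is essentially correct and matches the standard line of attack. The probabilistic lower bound for $k\ge 3$ via a union bound over the $\binom{m}{k}2^k$ missing-pattern events is the same argument Kleitman and Spencer used --- the paper notes that their lower bound was probabilistic and cites Alon \cite{alon_explicit_1986} for a later explicit construction. Your upper bound for $k\ge 3$, fixing $A_1,\dots,A_{k-2}$, passing to a smallest Boolean atom $E$ of these sets (so $|E|\le\lfloor n/2^{k-2}\rfloor$), and observing that $\{A_i\cap E : k-1\le i\le m\}$ is a $2$-independent family of distinct subsets of $E$, is a clean reduction to $k=2$ and delivers $d_2=4$ once the additive $k-2$ is absorbed into the threshold $N_k$.

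The genuine substance your proposal defers is the upper half of $f(n,2)=\binom{n-1}{\lfloor n/2\rfloor-1}$ for odd $n$, and you are right to flag it. Your reformulation of $2$-independence as ``intersecting, co-intersecting antichain'' is correct, as is the fact that $\mathcal B=\mathcal A\cup\{[n]\setminus A:A\in\mathcal A\}$ is a complementation-closed antichain with $|\mathcal B|=2|\mathcal A|$. For odd $n$, Sperner on $\mathcal B$ gives only $|\mathcal A|\le\tfrac12\binom{n}{\lfloor n/2\rfloor}=\tfrac{n}{n-1}\binom{n-1}{\lfloor n/2\rfloor-1}$, which just misses. Rather than the shift-then-EKR route you sketch, the cleanest finish is the LYM-type inequality of Bollob\'as for intersecting antichains whose members all have size at most $n/2$: for such $\mathcal F$ one has $\sum_{F\in\mathcal F}\binom{n-1}{|F|-1}^{-1}\le 1$. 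Apply it to $\mathcal B_{<}:=\{B\in\mathcal B:|B|\le(n-1)/2\}$. This family is an antichain, it is intersecting (if $B_1,B_2\in\mathcal B_<$ were disjoint then $B_1\subsetneq[n]\setminus B_2\in\mathcal B$, contradicting that $\mathcal B$ is an antichain), and $|\mathcal B_<|=|\mathcal A|$; every summand is at least $\binom{n-1}{(n-1)/2-1}^{-1}$, so $|\mathcal A|\le\binom{n-1}{(n-1)/2-1}$. With that lemma supplied in place of your EKR sketch, your argument is complete.
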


The proof of the lower bound for $f(n,k)$ in \cite{kleitman_families_1973} was probabilistic, but an explicit construction was later found in \cite{alon_explicit_1986}. We now use the existence of independent collections of sets as guaranteed to exist in Theorem \ref{thm:kindep} to prove the existence of $(G,n,\mathcal F_G)$-independent systems for appropriate choices of $G,\mathcal F_G$ in Lemma \ref{lem:makegraphindep} below. The proof of this lemma is similar to that of Theorem 2.4 in \cite{erdos_coloring_1986}.
\begin{lemma}
\label{lem:makegraphindep}
If there is an $r$-independent collection of $k$ subsets of an $n$-element set, and $G$ is a graph with $\chi(G) = h \leq 2^k$, and $\mathcal F_G$ is a collection of pairs $(v,S)$ (with $v \in V(G)$, $S \subset V(G)$ and $S$ only contains neighbors of $v$) where each such pair has $|S| \leq r$, then there is a $(G,n,\mathcal F_G)$ independent system.
\end{lemma}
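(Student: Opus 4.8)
I would prove this by turning a proper $h$-coloring of $G$ into a $(G,n,\mathcal F_G)$-independent system using the $r$-independent family as a ``coordinate system'' for the colors.

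\textbf{Setup.} Fix a proper coloring $c : V(G) \ra [h]$ with $h \le 2^k$, and fix an injection of $[h]$ into $\{0,1\}^k$; write $\sigma(v) \in \{0,1\}^k$ for the binary string assigned to $c(v)$, so that $\sigma(u) \neq \sigma(v)$ whenever $(u,v) \in E(G)$. Let $C_1, \ldots, C_k \subseteq [n]$ be the given $r$-independent collection, and for $j \in [k]$, $b \in \{0,1\}$ write $C_j^1 = C_j$, $C_j^0 = [n] \setminus C_j$, so that $C_j^{1-b} = [n] \setminus C_j^b$. For an edge $(u,v) \in E(G)$ with $u < v$, let $p(u,v)$ be the least coordinate on which $\sigma(u)$ and $\sigma(v)$ differ, and set $A_{u,v} := C_{p(u,v)}^{\sigma(v)_{p(u,v)}}$. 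This defines the candidate system.

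\textbf{Key reduction.} Given $(v,S) \in \mathcal F_G$ and $z \in S$ (a neighbor of $v$), let $p_z$ be the least coordinate where $\sigma(z)$ and $\sigma(v)$ differ, so $p_z = p(u,v)$ if $z = u < v$ and $p_z = p(v,w)$ if $z = w > v$. Writing $[\,x \in C_j\,]$ for the $0/1$ indicator, one checks two things. First, for $u \in S$ with $u < v$, the condition ``$x \in A_{u,v}$'' is exactly ``$[\,x \in C_{p_u}\,] = \sigma(v)_{p_u}$''. Second, for $w \in S$ with $w > v$, since $p_w$ is a coordinate on which $\sigma(v)$ and $\sigma(w)$ differ we have $\sigma(w)_{p_w} = 1 - \sigma(v)_{p_w}$, hence $A_{v,w} = C_{p_w}^{1-\sigma(v)_{p_w}} = [n] \setminus C_{p_w}^{\sigma(v)_{p_w}}$; therefore ``$x \notin A_{v,w}$'' is \emph{also} exactly ``$[\,x \in C_{p_w}\,] = \sigma(v)_{p_w}$''. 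Thus the sets we must show nonempty reduce, in both clauses of the definition, to an intersection of the form $\bigcap_{j \in P} C_j^{\sigma(v)_j}$ with $P = \{ p_z : z \in S\}$.

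\textbf{Conclusion via $r$-independence.} Since $G$ has no loops, $v \notin S$, so $v \neq \min S$ and the two clauses ($v > \min S$ and $v < \min S$) are exhaustive. If $v > \min S$, the displayed difference equals $\{ x : [\,x \in C_{p_z}\,] = \sigma(v)_{p_z} \text{ for all } z \in S\} = \bigcap_{j \in P} C_j^{\sigma(v)_j}$; if $v < \min S$ every element of $S$ exceeds $v$ and $[n] - \bigcup_{w \in S} A_{v,w}$ is the same intersection. In either case each coordinate $j \in P$ appears with the single, consistent sign $\sigma(v)_j$, and $|P| \le |S| \le r$, so the intersection is an intersection of at most $r$ of the $C_j$ (some possibly complemented, never both $C_j$ and $\bar C_j$); by $r$-independence of $C_1,\ldots,C_k$ this is nonempty (if $|P| < r$, extend $P$ to an $r$-subset of $[k]$ with arbitrary signs, which only shrinks the intersection). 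Hence $\{A_{u,v}\}$ is $(G,n,\mathcal F_G)$-independent.

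\textbf{Main obstacle.} The whole proof hinges on choosing $A_{u,v}$ so that ``lying inside $A_{u,v}$'' for the below-$v$ neighbors and ``lying outside $A_{v,w}$'' for the above-$v$ neighbors collapse to the \emph{same} linear condition on the coordinates of $\sigma(v)$; defining $A_{u,v}$ via the larger endpoint's color bit at the first differing coordinate is exactly what achieves this. Once that is arranged, the constraints cannot conflict, their number is bounded by $|S| \le r$, and $r$-independence finishes immediately; the rest is bookkeeping.
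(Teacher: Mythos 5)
Your proof is correct, and while it follows the paper's broad strategy—use a proper $h$-coloring together with the $r$-independent family to build the system, and choose $A_{u,v}$ so that at each vertex $v$ the ``positive'' constraints (from neighbors $u<v$ in $S$) and ``negative'' constraints (from neighbors $w>v$ in $S$) cannot conflict and involve at most $|S|\le r$ of the base sets—the encoding you use is genuinely different. The paper first re-indexes $V(G)$ so the coloring $\chi$ is monotone along the vertex order, enumerates $\mathcal P(\{C_1,\dots,C_k\})$ by size as $Y_1,\dots,Y_{2^k}$, and picks $A_{u,v}$ to be some base set in $Y_{\chi(v)}\setminus Y_{\chi(u)}$; its non-conflict invariant is that $A_{u,v}\in Y_{\chi(v)}$ while $A_{v,w}\notin Y_{\chi(v)}$, so the base sets appearing positively and negatively at $v$ are disjoint. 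Your encoding—assign each color a $k$-bit string $\sigma$ and set $A_{u,v}$ to $C_j$ or $\bar C_j$ according to $\sigma(v)$'s bit at the first coordinate $j$ where $\sigma(u),\sigma(v)$ disagree—achieves non-conflict by a slightly weaker and, I would say, more transparent invariant: ``$x\in A_{u,v}$'' for $u<v$ and ``$x\notin A_{v,w}$'' for $w>v$ literally reduce to the same condition $[x\in C_{p_z}]=\sigma(v)_{p_z}$, so even a base set used on both sides carries one consistent sign. Your version also dispenses with the vertex re-indexing step. Both arguments tacitly use $k\ge r$ (which is built into the definition of an $r$-independent collection of $k$ sets) when padding $P$ up to an $r$-element index set; you handle that correctly.
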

\begin{proof}
Suppose $\chi : G \ra [h]$ is a proper coloring of $G$, and by re-ordering the vertices of $G$ we can assume without loss of generality that $\chi$ respects the ordering of $V$; that is, for $u < v \in V(G)$, we have that $\chi(u) \leq \chi(v)$.

Next suppose that we have an $n$-element set $Q$, and subsets $A_1, \ldots, A_k \subseteq Q$ that are $r$-independent, for some $r \leq k$. Moreover recall that $h \leq 2^k$. Let $\mathscr C = \{A_1, \ldots, A_k\}$, and suppose that $\{Y_i : 1 \leq i \leq 2^k\}$ is an enumeration of the power set $\mathcal P(\mathscr C)$ with $|Y_i| \leq |Y_j|$ for $i < j$. Define the system of subsets
$$
\mathcal T = \left\{ A_{u,v} : 1 \leq u < v \leq |V(G)|, (u,v) \in E(G) \right\},
$$
by letting $A_{u,v} \in Y_{\chi(v)} - Y_{\chi(u)}$, where we have used the fact that $\chi(v) > \chi(u)$ for $v > u$ such that $(u,v) \in E$ (so in particular, we cannot have that $\chi(u) = \chi(v)$). We are also using the fact here that each $\chi(u) \leq 2^k$, which follows from $h \leq 2^k$. We claim that the collection $\mathcal T$ is $(G,n,\mathcal F_G)$-independent as long as for each $(v,S) \in \mathcal F_G$, we have $|S| \leq r$. To see this, note that for any $(v,S) \in \mathcal F_G$, we have that
\begin{eqnarray}
&& \bigcap_{u < v, u \in S, (u,v) \in E} A_{u,v} - \bigcup_{w > v, w \in S, (w,v) \in E} A_{v,w}\nonumber\\
\label{eq:rindep}
&=& A_{q_1} \cap \cdots \cap A_{q_s} \cap \bar A_{p_1} \cap \cdots \cap \bar A_{p_t},
\end{eqnarray}
for $1 \leq q_1, \ldots, q_s, p_1, \ldots, p_t \leq k$, and $s+t \leq |S| \leq r$. (If $v < \min S$, then the relevant quantity is $\bar A_{p_1} \cap \cdots \cap \bar A_{p_t}$, and $s = 0$.) Note that each of $A_{q_1}, \ldots, A_{q_s}$ are equal to one of $A_{u,v}$, and that each of $A_{p_1}, \ldots,  A_{p_t}$ are equal to one of $A_{v,w}$. For any $A_{u,v}$, we have that $A_{u,v} \in Y_{\chi(v)}$, and for any $A_{v,w}$ we have that $A_{v,w} \not \in Y_{\chi(v)}$. Therefore, we have that $A_{u,v} \neq A_{v,w}$ for all valid choice of $u,w$. Therefore, by $r$-independence of the collection $\mathscr C = \{A_1, \ldots, A_k\}$, we have that (\ref{eq:rindep}) is nonempty. This implies that $\mathcal T$ is $(G,n,\mathcal F_G)$-independent.
\end{proof}

Theorem \ref{thm:homoupp} now follows as an immediate consequence of Lemma \ref{lem:makegraphindep} and Theorem \ref{thm:kindep}:
\begin{proof}[Proof of Theorem \ref{thm:homoupp}]
Given $\phi : H \ra G$ with $|\phi(N(v))| \leq r$ for each $v \in H$, note that the collection $\mathcal F_{G,\phi}$ satisfies $|S| \leq r$ for each $(v,S) \in \mathcal F_{G,\phi}$. Next, by Theorem \ref{thm:kindep}, there is an $r$-independent collection of $2^{n2^{-r}/r}$ subsets of an $n$-element set. Therefore, by Lemma \ref{lem:makegraphindep}, as long as $\chi(G) \leq 2^{2^{n2^{-r}/r}}$, we have that a $(G,n,\mathcal F_{G,\phi})$-independent system exists. Lemma \ref{lem:colorupbound} then implies that $\chi(H) \leq n$. Note that $\chi(G) \leq 2^{2^{n2^{-r}/r}}$ is equivalent to $n \geq r2^r \log \log \chi(G)$, which implies that $\chi(H) \leq \lceil r2^r \log \log \chi(G) \rceil$.
\end{proof}

\subsection{Chain graphs: basic facts}

Before deriving our upper bounds on $\chi(U(m,R,\delta))$ for various choices of $m, R, \delta$, we first establish some basic facts about the chain graphs $U(m,R,\delta)$. The first result, Proposition \ref{prop:2colorings} below, explains how the result of Haramaty and Sudan \cite{haramaty_deterministic_2016} implies an upper bound on $\chi(U(m,R,\delta))$ (which is weaker than ours). Haramaty and Sudan \cite{haramaty_deterministic_2016} showed that for $m,R,\delta \in \BN$, the set $V(U(m,R,2\delta))$ can be colored with at most $2^{6(R+1)}\log^{(\delta)}m$ colors, such that for any two chains $\mathcal A = \langle \alpha, \ldots, A_{2\delta} \rangle, \mathcal B = \langle \beta, \ldots, B_{2\delta}\rangle$ in this set, if $S^1(\mathcal A) \cap S^1(\mathcal B) \neq \emptyset$ and $\alpha \neq \beta$, then $\mathcal A$ and $\mathcal B$ are colored by different colors. By Proposition \ref{prop:2colorings}, this implies that there is a proper vertex coloring of the graph $U(m,R,\delta)$ with at most $2^{(6(R+1))} \cdot \log^{(\delta)}m$ colors. 

\begin{proposition}
  \label{prop:2colorings}
Let $R, \delta, c \in \BN$. Then the following two statements are equivalent:
\begin{enumerate}
\item There exists a $c$-coloring of the set of all chains in $Chain(m)$ that have size at most $s$ and length $2\delta$ such that for any two chains $\mathcal A = \langle \alpha, \ldots, A_{2\delta} \rangle, \mathcal B = \langle \beta, \ldots, B_{2\delta}\rangle$ in this set, if $S^1(\mathcal A) \cap S^1(\mathcal B) \neq \emptyset$ and $\alpha \neq \beta$, then $\mathcal A$ and $\mathcal B$ are colored by different colors.
\item There is a proper $c$-coloring of $U(m,R,\delta)$.
\end{enumerate}
\end{proposition}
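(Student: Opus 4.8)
The plan is to translate colorings back and forth between the two kinds of objects using two explicit maps, with the translation made possible by a combinatorial characterization of when two length-$2\delta$ chains have a common chain in their $S^1$-images. One map sends a length-$2\delta$ chain $\mathcal A=\langle A_0,\dots,A_{2\delta}\rangle$ of size at most $R$ to its ``even subchain'' $\widehat{\mathcal A}=\langle A_0,A_2,A_4,\dots,A_{2\delta}\rangle$, which is a vertex of $U(m,R,\delta)$; the other ``expands'' a vertex of $U(m,R,\delta)$ into a length-$2\delta$ chain. (I take the size bound in statement~(1) to be $R$, matching the middle parameter of $U(m,R,\delta)$; the ``$s$'' in that statement is the paper's older notation.)

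\textbf{The characterization.} For length-$2\delta$ chains $\mathcal A=\langle A_0,\dots,A_{2\delta}\rangle$, $\mathcal B=\langle B_0,\dots,B_{2\delta}\rangle$ (with $A_{-1}=B_{-1}=\emptyset$), I would show $S^1(\mathcal A)\cap S^1(\mathcal B)\neq\emptyset$ if and only if (i)~$A_1\cap B_1\neq\emptyset$, and (ii)~$A_{i-1}\subseteq B_{i+1}$ and $B_{i-1}\subseteq A_{i+1}$ for all $1\le i\le 2\delta-1$. Necessity is read off the definition of $S^1$: a common chain $\mathcal C=\langle C_0,\dots,C_{2\delta-1}\rangle$ has $|C_0|=1$ with $C_0\subseteq A_1\cap B_1$, and $A_{i-1}\subseteq C_i\subseteq B_{i+1}$, $B_{i-1}\subseteq C_i\subseteq A_{i+1}$. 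For sufficiency I would simply write down a common chain: fix a singleton $C_0\subseteq A_1\cap B_1$ and set $C_i=A_{i-1}\cup B_{i-1}\cup C_0$ for $1\le i\le 2\delta-1$. Nestedness of $\mathcal A$ and $\mathcal B$ makes $\mathcal C$ a chain; the lower containments $A_{i-1}\subseteq C_i$ and $B_{i-1}\subseteq C_i$ are built in; and the upper containments $C_i\subseteq A_{i+1}$, $C_i\subseteq B_{i+1}$ follow from~(ii) together with $C_0\subseteq A_1\cap B_1\subseteq A_{i+1}\cap B_{i+1}$.

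\textbf{The two reductions.} For $(2)\Rightarrow(1)$: given a proper $c$-coloring $g$ of $U(m,R,\delta)$, color a length-$2\delta$ chain $\mathcal A$ of size at most $R$ by $g(\widehat{\mathcal A})$. If $S^1(\mathcal A)\cap S^1(\mathcal B)\neq\emptyset$ and $A_0\neq B_0$, then the odd instances $i=2j+1$ of~(ii) read $A_{2j}\subseteq B_{2j+2}$ and $B_{2j}\subseteq A_{2j+2}$ for $0\le j\le\delta-1$ --- for $j=0$ these are $\alpha\in\widehat B_1$ and $\beta\in\widehat A_1$, and for $1\le j\le\delta-1$ they are the consecutive-level conditions in $E(U(m,R,\delta))$ --- so together with $\alpha\neq\beta$ we get $(\widehat{\mathcal A},\widehat{\mathcal B})\in E(U(m,R,\delta))$, hence $g$ separates $\mathcal A$ and $\mathcal B$. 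For $(1)\Rightarrow(2)$: given $h$ as in~(1), color a vertex $\mathcal D=\langle D_0,\dots,D_\delta\rangle$ of $U(m,R,\delta)$ by $h(\mathcal E(\mathcal D))$, where $\mathcal E(\mathcal D)=\langle D_0, D_1, D_1, D_2, D_2, \dots, D_\delta, D_\delta\rangle$, that is, $E_k=D_{\lceil k/2\rceil}$ for $0\le k\le 2\delta$; this is a length-$2\delta$ chain of size at most $R$ (and in fact $\widehat{\mathcal E(\mathcal D)}=\mathcal D$). If $\mathcal D,\mathcal D'$ are adjacent, the edge relation gives $\alpha\neq\beta$, $\alpha\in D_1'$, $\beta\in D_1$, and $D_j\subseteq D'_{j+1}$, $D'_j\subseteq D_{j+1}$ for $1\le j\le\delta-1$; writing $E_k=D_{\lceil k/2\rceil}$ and $F_k=D'_{\lceil k/2\rceil}$, condition~(i) becomes $D_1\cap D_1'\neq\emptyset$ (true since $\alpha\in D_0\subseteq D_1$ and $\alpha\in D_1'$), and each instance of~(ii) reduces, via $\lceil(i-1)/2\rceil=j$ and $\lceil(i+1)/2\rceil=j+1$ for $i\in\{2j,2j+1\}$, to one of the edge conditions $D_j\subseteq D'_{j+1}$ or $D'_j\subseteq D_{j+1}$ (the case $i=1$ being exactly $\alpha\in D_1'$, $\beta\in D_1$). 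By the characterization $S^1(\mathcal E(\mathcal D))\cap S^1(\mathcal E(\mathcal D'))\neq\emptyset$, and since $D_0\neq D_0'$, property~(1) forces $h(\mathcal E(\mathcal D))\neq h(\mathcal E(\mathcal D'))$. Both directions plainly preserve the number of colors.

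\textbf{Expected obstacle.} The one point requiring care is condition~(i) in the $(1)\Rightarrow(2)$ direction, and it is what forces the shifted form of $\mathcal E$. The naive ``repeat each level'' expansion $\langle D_0,D_0,D_1,D_1,\dots\rangle$ has $E_1=\{\alpha\}$, $F_1=\{\beta\}$ disjoint whenever $\alpha\neq\beta$, so~(i) fails and adjacency in $U(m,R,\delta)$ no longer implies an $S^1$-intersection; shifting so that $E_1=D_1$ repairs this precisely because $\alpha\in D_1\cap D_1'$ whenever $\mathcal D,\mathcal D'$ are adjacent. Once the characterization and this choice of $\mathcal E$ are in hand, the remainder is routine bookkeeping, and I would display the correspondence $E_k=D_{\lceil k/2\rceil}$ and the parity split $i\in\{2j,2j+1\}$ once so that the verification of~(ii) in the second reduction is transparent.
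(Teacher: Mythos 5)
Your proof is correct and takes essentially the same approach as the paper's: the two translation maps (the even subchain $\widehat{\mathcal A}$ for $(2)\Rightarrow(1)$, and the shifted expansion $\langle D_0,D_1,D_1,\dots,D_\delta,D_\delta\rangle$ for $(1)\Rightarrow(2)$) are exactly the maps used in the paper, and the explicit common chain your characterization lemma produces (namely $C_0=\{\alpha\}$, $C_i=E_{i-1}\cup F_{i-1}\cup C_0$) is the same one the paper exhibits inline. The only cosmetic difference is that you factor out the $S^1$-intersection condition as a standalone characterization, whereas the paper carries out the two verifications directly.
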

\begin{proof}
We first suppose that (1) is true, and construct a proper $c$-coloring of $U(m,R,\delta)$. In particular, for any vertex, say $\mathcal A = \langle \alpha, A_1, \ldots, A_\delta \rangle$, we may give it the color of $\langle \alpha, A_1, A_1, A_2, A_2, \ldots, A_\delta, A_\delta \rangle$, which is a chain of length $2\delta$ and size at most $R$. To see that this is a proper coloring, consider an edge $(\langle \alpha, A_1, \ldots, A_\delta \rangle, \langle \beta, B_1, \ldots, B_\delta \rangle)$ of $U(m,R,\delta)$. Then
\begin{eqnarray}
&&\langle \alpha, \{\alpha\} \cup \{\beta\}, A_1 \cup B_1, A_1 \cup B_1, \ldots, A_{\delta-1} \cup B_{\delta-1}, A_{\delta-1} \cup B_{\delta-1} \rangle\nonumber\\
&\in& S^1(\langle \alpha, A_1, A_1, A_2, A_2, \ldots, A_\delta, A_\delta \rangle) \cap S^1(\langle \beta, B_1, B_1,B_2,B_2, \ldots, B_\delta, B_\delta \rangle).\nonumber
\end{eqnarray}
Since also $\alpha \neq \beta$, by the definition of the coloring in statement (1), it follows that $\langle \alpha, A_1, \ldots, A_\delta \rangle$ and $\langle \beta, B_1, \ldots, B_\delta \rangle$ receive different colors.

Next suppose that we are given a proper $c$-coloring of $U(m,R,\delta)$. For each chain $\mathcal A = \langle \alpha, A_1, \ldots, A_{2\delta} \rangle$ of size at most $R$ and length $2\delta$, we give $\mathcal A$ the color of $\langle \alpha, A_2, A_4, \ldots, A_{2\delta} \rangle\\ \in V(U(m,R,\delta))$. To see that this coloring satisfies the condition in (1), suppose that for chains $\mathcal A = \langle \alpha, A_1, \ldots, A_{2\delta} \rangle$ and $\mathcal B = \langle \beta, B_1, \ldots, B_{2\delta} \rangle$ of length $2\delta$ and size at most $R$, the chain $\mathcal C = \langle \gamma, C_1, \ldots, C_{2\delta-1} \rangle \in S^1(\mathcal A) \cap S^1(\mathcal B)$ and $\alpha \neq \beta$. Then $\alpha \in C_1 \subseteq B_2$, $\beta \in C_1 \subseteq A_2$, and for $1 \leq i \leq \delta-1$, $A_{2i} \subseteq C_{2i+1} \subseteq B_{2(i+1)}$ and $B_{2i} \subseteq C_{2i+1} \subseteq A_{2(i+1)}$. This implies that $\langle \alpha, A_2, \ldots, A_{2\delta} \rangle$ and $\langle \beta, B_2, \ldots, B_{2\delta} \rangle$ are adjacent in the graph $U(m,R,\delta)$, which implies that $\mathcal A$ and $\mathcal B$ indeed receive different colors this way.
\end{proof}


Our goal is to determine if it is possible to obtain some kind of bound on $\chi(U(m,R,\delta))$ that improves the bound $\chi(U(m,R,\delta)) \leq 2^{O(R)} \cdot \log^{(\delta)}m$ from \cite{haramaty_deterministic_2016}. To do so, we will use Theorems \ref{thm:homoupp} and \ref{thm:homolow} to reason about $\chi(U(m,R,\delta))$; first, though, we must establish an appropriate graph homomorphism $\phi$ used in those theorems. 

We define the map $\phi : V(U(m,R,\delta)) \ra V(U(m,R,\delta-1))$ by
\begin{equation}
  \label{eq:definephi}
\phi(\langle \alpha, A_1, \ldots, A_{\delta} \rangle) = \langle \alpha, A_1, \ldots, A_{\delta-1} \rangle.
\end{equation}
It is immediate that $\phi$ is a graph homomorphism. It is also complete:
\begin{lemma}
\label{lem:tset}
For any choice of $m,R,\delta$, the graph homomorphism $\phi$ defined in (\ref{eq:definephi}) is complete. 
\end{lemma}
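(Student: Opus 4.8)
My plan is to verify the definition of completeness directly for the homomorphism $\phi$ of (\ref{eq:definephi}). Recall that $\phi$ is complete if, whenever $x,z \in V(U(m,R,\delta))$ satisfy $\phi(z) \in \phi(N(x))$ and $\phi(x) \in \phi(N(z))$, we must have $(x,z) \in E(U(m,R,\delta))$. So I would take two vertices $x = \langle \alpha, A_1, \ldots, A_\delta \rangle$ and $z = \langle \beta, B_1, \ldots, B_\delta \rangle$ and unpack what $\phi(z) \in \phi(N(x))$ means: there is a neighbor $y = \langle \gamma, C_1, \ldots, C_\delta \rangle$ of $x$ in $U(m,R,\delta)$ with $\phi(y) = \phi(z)$, i.e.\ $\gamma = \beta$, $C_i = B_i$ for $1 \leq i \leq \delta - 1$. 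Spelling out $(x,y) \in E$ with these substitutions gives exactly: $\alpha \neq \beta$, $\alpha \in B_1$, $\beta \in A_1$, and for $1 \leq i \leq \delta - 1$ the containments $A_i \subseteq B_{i+1}$ (when $i+1 \leq \delta - 1$, this is literally an edge condition; for $i = \delta - 1$ it reads $A_{\delta-1} \subseteq C_\delta$, which is a statement only about the ``free'' last coordinate of $y$) and $B_i \subseteq A_{i+1}$. The symmetric hypothesis $\phi(x) \in \phi(N(z))$ produces, via a neighbor $y'$ of $z$ with $\phi(y') = \phi(x)$, the same relations plus the analogous constraint on the last coordinate of $y'$.

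The key observation is that \emph{all} the edge conditions defining $(x,z) \in E(U(m,R,\delta))$ — namely $\alpha \neq \beta$, $\alpha \in B_1$, $\beta \in A_1$, and for $1 \leq i \leq \delta - 1$ both $A_i \subseteq B_{i+1}$ and $B_i \subseteq A_{i+1}$ — involve only the coordinates $A_1, \ldots, A_\delta$ and $B_1, \ldots, B_\delta$ of $x$ and $z$ themselves, and each such condition already appears among the relations extracted above. Concretely: the conditions $\alpha \neq \beta$, $\alpha \in B_1$, $\beta \in A_1$, and $A_i \subseteq B_{i+1}$, $B_i \subseteq A_{i+1}$ for $i$ up to $\delta - 2$ come directly from $(x,y) \in E$ (these only reference $B_1, \ldots, B_{\delta-1}$, which agree with $C_1, \ldots, C_{\delta-1}$); and the two remaining conditions $A_{\delta-1} \subseteq B_\delta$ and $B_{\delta-1} \subseteq A_\delta$ come one from each of the two hypotheses — $B_{\delta-1} \subseteq A_\delta$ from $(x,y') \in E$ (since $y'$ agrees with $x$ in its first $\delta-1$ coordinates and $B_{\delta-1}$ is a genuine coordinate of $z$), and $A_{\delta-1} \subseteq B_\delta$ from $(x,y)\in E$ combined with... wait: more carefully, $A_{\delta-1} \subseteq B_\delta$ needs the last coordinate $B_\delta$ of $z$, which is \emph{not} seen by $y$; instead it is the condition $A_{\delta-1} \subseteq C'_\delta$ from $(z,y') \in E$ where $C'_\delta$ is $y'$'s last coordinate — but that's $A_\delta$, not $B_\delta$. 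So the right pairing is: from $(x,y) \in E$ we read $B_{\delta-1} \subseteq C_\delta$ with $C_\delta$ unconstrained, which is useless; the usable last-coordinate facts are $A_{\delta-1} \subseteq C_\delta$ (from $x$–$y$, about $y$) and $B_{\delta-1} \subseteq C'_\delta$ (from $z$–$y'$, about $y'$). I would therefore instead observe that the needed containments $A_{\delta-1}\subseteq B_\delta$ and $B_{\delta-1}\subseteq A_\delta$ are recovered by choosing the neighbors $y, y'$ appropriately — or, cleaner, I would simply take $y := z$ itself as the neighbor witnessing $\phi(z)\in\phi(N(x))$ once I know $(x,z)$ is forced, which is circular, so the honest route is to carry both auxiliary chains $y, y'$ and assemble the $2\delta - 1$ scalar/containment conditions from the two edge relations $(x,y)\in E$ and $(z,y')\in E$, checking each of the defining conditions of $(x,z) \in E$ against this list.

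The main (and essentially only) obstacle is the bookkeeping at the last coordinate: $\phi$ forgets coordinate $\delta$, so the conditions $A_{\delta-1} \subseteq B_\delta$ and $B_{\delta-1} \subseteq A_\delta$ are the ones not visible ``for free,'' and I must confirm that the witnessing neighbors $y$ of $x$ and $y'$ of $z$ can be (or automatically are) chosen so that their hidden last coordinates supply exactly these two containments — which they do, since $(x,y)\in E$ forces $A_{\delta-1}\subseteq (\text{last coord of }y)$ and, because $\phi(y)=\phi(z)$ means $y$ and $z$ share their first $\delta-1$ coordinates, one checks the relation transfers to $z$; symmetrically for $y'$. Everything else is a direct transcription of the definitions, so once the last-coordinate matching is pinned down the proof is complete.
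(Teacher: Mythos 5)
Your high-level approach is exactly the paper's: unpack the definition of completeness, introduce auxiliary chains $y$ (a neighbor of $x$ with $\phi(y)=\phi(z)$) and $y'$ (a neighbor of $z$ with $\phi(y')=\phi(x)$), and read off the edge conditions for $(x,z)$ from the two edge relations $(x,y)\in E$ and $(z,y')\in E$. In your first paragraph you also read these off correctly: from $(x,y)\in E$ you get $\alpha\neq\beta$, $\alpha\in B_1$, $\beta\in A_1$, $A_i\subseteq B_{i+1}$ for $i\leq\delta-2$, and $B_i\subseteq A_{i+1}$ for \emph{all} $i\leq\delta-1$ (since $C_i=B_i$ there), in particular $B_{\delta-1}\subseteq A_\delta$. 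The one containment this hypothesis cannot supply is $A_{\delta-1}\subseteq B_\delta$, and symmetrically $(z,y')\in E$ supplies exactly that (via $A'_{\delta-1}\subseteq B_\delta$ with $A'_{\delta-1}=A_{\delta-1}$).

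But your later bookkeeping contradicts your first paragraph and the final paragraph's mechanism is genuinely wrong. You claim that $(x,y)\in E$ forces $A_{\delta-1}\subseteq C_\delta$ and that ``because $\phi(y)=\phi(z)$ means $y$ and $z$ share their first $\delta-1$ coordinates, one checks the relation transfers to $z$,'' yielding $A_{\delta-1}\subseteq B_\delta$. That transfer is false: $C_\delta$ is precisely the coordinate $\phi$ forgets, so $\phi(y)=\phi(z)$ imposes no relation between $C_\delta$ and $B_\delta$, and $A_{\delta-1}\subseteq C_\delta$ tells you nothing about $B_\delta$. You also at one point list ``$B_{\delta-1}\subseteq C_\delta$'' as a consequence of $(x,y)\in E$, which is not one of the edge conditions at all. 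The repair is the pairing you originally wrote down and then talked yourself out of: $(x,y)\in E$ gives $C_{\delta-1}\subseteq A_\delta$, hence $B_{\delta-1}\subseteq A_\delta$ (using $C_{\delta-1}=B_{\delta-1}$), and $(z,y')\in E$ gives $A'_{\delta-1}\subseteq B_\delta$, hence $A_{\delta-1}\subseteq B_\delta$ (using $A'_{\delta-1}=A_{\delta-1}$). With that correction your argument coincides with the paper's proof of Lemma~\ref{lem:tset}; as written, the key step rests on a false transfer claim.
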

\begin{proof}
  %
%
Consider any chains $\mathcal A, \mathcal B \in V(W(m,\sigma,\delta))$, and suppose that $\phi(\mathcal A) \in \phi(N(\mathcal B))$ and $\phi(\mathcal B) \in \phi(N(\mathcal A))$. Let us write $\mathcal A = \langle \alpha, A_1, \ldots, A_\delta \rangle$ and $\mathcal B = \langle \beta, B_1, \ldots, B_\delta \rangle$. Since $\phi(\mathcal A) \in \phi(N(\MB))$, we have that for some $\MC = \langle \gamma, C_1, \ldots, C_\delta \rangle \in N(\MB)$, $\phi(\MA) = \phi(\MC)$. In particular, this means that $\alpha = \gamma$ and $A_i = C_i$ for $1 \leq i \leq \delta-1$. Since $\MC \in N(\MB)$, it follows that $\alpha \in B_1, \beta \in A_1$, and for $1 \leq i \leq \delta - 2$, $A_i \subseteq B_{i+1}$ and $B_i \subseteq A_{i+1}$. This also gives us that $A_{\delta-1} \subseteq B_\delta$. In a symmetric manner, since $\phi(\mathcal B) \in \phi(N(\MA))$, we have that $B_{\delta-1} \subseteq A_{\delta}$. This implies that $(\mathcal A, \mathcal B) \in E(W(m,\sigma,\delta))$.
\end{proof}

The next lemma states that completeness respects restrictions to induced subgraphs. It will be useful when we prove lower bounds on the chromatic number of induced subgraphs of $U(m,R,\delta)$ in Section \ref{sec:lowbnd}. Given a graph homomorphism $\phi : H \ra G$ and an induced subgraph $H'$ of $H$, we will denote the restriction of $\phi$ to $H'$ by $\phi_{H'} : H' \ra G$. Moreover, in the proof of the below lemma, for a vertex $x \in V(H)$, we denote by $N_{H}(x)$ the neighborhood of $x$ in $H$ and by $N_{H'}(x)$ the neighborhood of $x$ in $H'$.
\begin{lemma}
  \label{lem:restindsub}
Suppose that $G,H$ are graphs and $H'$ is an induced subgraph of $H$. Suppose that $\phi : H \ra G$ is a complete graph homomorphism. Then the restriction $\phi_{H'} : H' \ra G$ is also complete.
\end{lemma}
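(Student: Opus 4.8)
The plan is to unwind the definition of completeness for $\phi_{H'}$ and reduce it to completeness of $\phi$, using the single structural fact that an induced subgraph neither adds nor hides edges among its own vertices. Concretely, I would start by taking arbitrary $x, z \in V(H')$ and assuming the hypothesis of completeness for $\phi_{H'}$, namely $\phi(z) \in \phi(N_{H'}(x))$ and $\phi(x) \in \phi(N_{H'}(z))$. The goal is to deduce $(x,z) \in E(H')$.

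The key observation is that since $H'$ is an \emph{induced} subgraph of $H$, for every vertex $w \in V(H')$ we have $N_{H'}(w) \subseteq N_H(w)$ (a neighbor of $w$ inside $H'$ is in particular a neighbor of $w$ in $H$). Applying $\phi$ to this containment gives $\phi(N_{H'}(w)) \subseteq \phi(N_H(w))$. Using this with $w = x$ and $w = z$, the assumptions $\phi(z) \in \phi(N_{H'}(x))$ and $\phi(x) \in \phi(N_{H'}(z))$ upgrade to $\phi(z) \in \phi(N_H(x))$ and $\phi(x) \in \phi(N_H(z))$.

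Now completeness of $\phi : H \ra G$ applies directly to the pair $x, z$ and yields $(x,z) \in E(H)$. Finally, since $x, z \in V(H')$ and $H'$ is an induced subgraph, the edge $(x,z)$ of $H$ is also an edge of $H'$, i.e.\ $(x,z) \in E(H')$. This is exactly what completeness of $\phi_{H'}$ requires, so the lemma follows.

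There is essentially no obstacle here: the only subtlety is being careful to invoke the "induced" hypothesis in both directions — once to pass from $N_{H'}$ to $N_H$ (so that the hypothesis of $\phi$-completeness is met) and once to pass the resulting edge of $H$ back down to $H'$. Everything else is a direct substitution into the definitions, and $\phi_{H'}$ is automatically a graph homomorphism since a restriction of a homomorphism to an induced subgraph is one.
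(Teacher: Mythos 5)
Your proof is correct and follows essentially the same route as the paper's: pass from $N_{H'}$ to $N_H$ via inclusion, invoke completeness of $\phi$ to get $(x,z)\in E(H)$, then use the induced-subgraph property to descend to $E(H')$. In fact your wording is slightly cleaner at the final step, where the paper's version contains a small typo.
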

\begin{proof}
Consider vertices $x,z \in V(H')$ such that $\phi_{H'}(x) \in \phi_{H'}(N_{H'}(z))$ and $\phi_{H'}(z) \in \phi_{H'}(N_{H'}(x))$. Our aim is to show that $(x,z) \in E(H')$. Since $N_{H'}(x) \subseteq N_H(x)$ and $N_{H'}(z) \subseteq N_H(z)$, it follows that $\phi(x) \in \phi(N_H(z))$ and $\phi(z) \in \phi(N_H(x))$. Since $\phi$ is complete, we have $(x,z) \in E(H)$ as a consequence. Since $H'$ is an induced subgraph and $(x,z) \in E(H')$, it follows that $(x,z) \in E(H')$, as desired.
\end{proof}

\section{Upper bound on chromatic number of chain graphs}
\label{sec:chaingraphs}
Now we use the results in the previous section to derive an upper bound on the chromatic number of $U(m,R,\delta)$, as well as an improved upper bound on a subgraph of $U(m,R,\delta)$ whose vertices are chains that grow exponentially in size (here recall that $m,R,\delta \in \BN$, where $m$ denotes the size of the universe, $R$ denotes that maximum size of the chains, and $\delta$ denotes the length of the chains). 
We begin with a small lemma that allows us to bound the ``$r$'' parameter in Theorem \ref{thm:homoupp}.
\begin{lemma}
  \label{lem:boundtsetgen}
For $\mathcal A \in V(U(m,R,\delta))$, we have that $| \phi(N(\MA)) | \leq 2^{2 \cdot sz(\MA)} \leq 2^{2 R}$.
\end{lemma}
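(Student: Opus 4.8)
The plan is to bound $|\phi(N(\MA))|$ by counting the possible values of $\phi(\MB)$ as $\MB$ ranges over neighbors of $\MA = \langle \alpha, A_1, \ldots, A_\delta\rangle$. By the definition of $\phi$ in (\ref{eq:definephi}), $\phi(\MB) = \langle \beta, B_1, \ldots, B_{\delta-1}\rangle$, so it suffices to bound the number of distinct tuples $(\beta, B_1, \ldots, B_{\delta-1})$ that can arise. First I would observe that the edge condition defining $E(U(m,R,\delta))$ forces, for every neighbor $\MB$ of $\MA$: $\beta \in A_1$, $\alpha \in B_1$, and for $1 \leq i \leq \delta - 1$, $A_{i-1} \subseteq B_i$ and $B_i \subseteq A_{i+1}$ (reading the relevant containments off the edge definition, with the convention $A_0 = \{\alpha\}$). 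In particular each $B_i$ is sandwiched: $A_{i-1} \subseteq B_i \subseteq A_{i+1}$.

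The key step is then the counting. The element $\beta$ must lie in $A_1$, which has at most $sz(\MA)$ elements — but actually I only need the crude bound that $\beta$ is determined once we know $B_1$ (since $\beta$ is the unique element of $B_0$, and more to the point $\beta \in A_1$ gives at most $|A_1| \leq sz(\MA)$ choices). For each $i$ with $1 \leq i \leq \delta-1$, the set $B_i$ satisfies $A_{i-1} \subseteq B_i \subseteq A_{i+1}$, so $B_i$ is determined by choosing a subset of $A_{i+1} \setminus A_{i-1}$; since $|A_{i+1} \setminus A_{i-1}| \leq |A_{i+1}| \leq sz(\MA)$, there are at most $2^{sz(\MA)}$ choices for each $B_i$. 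Naively multiplying over $i$ gives $2^{\delta \cdot sz(\MA)}$, which is too weak; the point of the lemma is that the sandwiching constraints are \emph{nested} and overlap, so the choices are far from independent. The right way to see the $2^{2\,sz(\MA)}$ bound is to note that the tuple $(B_1, \ldots, B_{\delta-1})$ is itself a chain whose largest set has size at most $sz(\MA)$, and that such a chain (together with the choice of $\beta \in B_1$) is specified by recording, for each of the at most $sz(\MA)$ elements of the ambient set $\bigcup_i B_i \subseteq A_\delta$, \emph{when} it first enters the chain — i.e.\ an assignment from $A_\delta$'s elements to a set of ``entry levels,'' plus which element is $\beta$. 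A cleaner route: any neighbor $\MB$ is an element of $S^1$-type expansion data of $\MA$, and a chain of size at most $\sigma := sz(\MA)$ supported inside a fixed $\sigma$-element set is determined by a function from that $\sigma$-set to $\{0, 1, \ldots\}$; the number of \emph{distinct} such chains arising as neighbors, once one accounts for the constraint $B_i \subseteq A_{i+1}$ collapsing the level information, is at most $2^{\sigma} \cdot 2^{\sigma} = 2^{2\sigma}$ — the first $2^\sigma$ from choosing $B_1$ (a subset of $A_2$ containing $\alpha$, but we overcount by $2^\sigma$) which simultaneously pins down $\beta$'s containing the overlap, and the second $2^\sigma$ absorbing the residual freedom in the remaining $B_i$ via the telescoping containments $B_i \subseteq A_{i+1}$, $A_{i-1} \subseteq B_i$.

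The main obstacle — and the step I would spend the most care on — is precisely making that last collapse rigorous: showing that the naive $2^{\delta\sigma}$ overcount reduces to $2^{2\sigma}$ because the sandwich conditions $A_{i-1} \subseteq B_i \subseteq A_{i+1}$ chain together. The clean argument is to track each element $x \in A_\delta$ separately: the constraints say that if $x \in A_{i-1}$ then $x \in B_i$, and if $x \notin A_{i+1}$ then $x \notin B_i$, so the only ``free'' index range for $x$ is between the level at which $x$ enters the $A$-chain and two steps earlier — a window of bounded width. Summing the $\log_2$ of the number of free choices over all $x \in A_\delta$ gives at most $2\,|A_\delta| = 2\,sz(\MA)$ bits of freedom total (each element contributes at most one bit per endpoint of its window, and the windows' widths telescope), hence at most $2^{2\,sz(\MA)}$ neighbors' images. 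Finally, since $sz(\MA) = |A_\delta| \leq R$ by the definition of $U(m,R,\delta)$, we get $|\phi(N(\MA))| \leq 2^{2\,sz(\MA)} \leq 2^{2R}$, completing the proof. I would present this element-by-element window-counting as the core of the argument, as it is the only place where the nesting of the chain is genuinely used rather than trivially bounded.
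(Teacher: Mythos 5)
Your diagnosis of why the bound is not $2^{\delta \cdot sz(\MA)}$ is off, and that is what makes the rest of the proposal more complicated than it needs to be. You correctly identify that each $B_i$ is determined by a free subset of $A_{i+1}\setminus A_{i-1}$, i.e.\ at most $2^{|A_{i+1}|-|A_{i-1}|}$ choices --- and then you discard this tight count in favor of $2^{sz(\MA)}$ and conclude that ``naively multiplying'' is too weak. Keeping the tight factors and multiplying is exactly the paper's proof: the exponent $\sum_{i=1}^{\delta-1}\bigl(|A_{i+1}|-|A_{i-1}|\bigr)$ telescopes to $|A_{\delta-1}|+|A_\delta|-|A_1|-1$, and after multiplying by the $|A_1|\leq 2^{|A_1|}$ choices for $\beta\in A_1$ the total is at most $2^{|A_{\delta-1}|+|A_\delta|-1}\leq 2^{2\,sz(\MA)}$. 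No dependence among the $B_i$'s is used or needed; it is pure telescoping of sizes.

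Your element-by-element ``window'' argument is a legitimate dual view --- summing the number of free indices $i$ over elements $x$ is the same double count of pairs $(x,i)$ with $x\in A_{i+1}\setminus A_{i-1}$ as summing $|A_{i+1}|-|A_{i-1}|$ over $i$ --- but as stated it has a gap. If you charge every $x\in A_\delta$ two free indices, you already reach $2\,sz(\MA)$ bits for the $B_i$'s alone, and the additional $\log_2|A_1|$ bits for $\beta$ then push the total past $2\,sz(\MA)$. Closing this requires the refinement your parenthetical only gestures at: $\alpha$ has no free indices in $\{1,\ldots,\delta-1\}$, and elements entering the $A$-chain at level $1$ or level $\delta$ have only one, so the element-wise total is again $|A_{\delta-1}|+|A_\delta|-|A_1|-1$, leaving exactly enough slack to absorb the $\beta$ factor. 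The index-wise multiplication is shorter and handles this automatically; I would present that version.
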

\begin{proof}
  Write $\MA = \langle \alpha, A_1, \ldots, A_\delta \rangle$, and consider any $\langle \beta, B_1, \ldots, B_\delta \rangle \in \phi(N(\MA))$. We must have $\beta \in A_1$, $\alpha \in B_1$, $B_1 \subseteq A_2$, and $A_{i-1} \subseteq B_i \subseteq A_{i+1}$ for $2 \leq i \leq \delta - 1$, so the number of choices for $\langle \beta, B_1, \ldots, B_{\delta-1} \rangle$ is at most
  \begin{eqnarray}
     && |A_1| \cdot 2^{|A_2|-1} \cdot 2^{|A_3| - |A_1|} \cdot 2^{|A_4| - |A_2|} \cdots 2^{|A_\delta| - |A_{\delta-2}|}\nonumber\\
    & \leq & 2^{|A_1| + \cdots + |A_\delta| - (|A_1| + |A_2| + \cdots + |A_{\delta-2}|)}\nonumber\\
    & \leq & 2^{2 \cdot |A_\delta|} = 2^{2 \cdot sz(\MA)}\nonumber.
  \end{eqnarray}
  \end{proof}

  When $R \ll m$, Theorem \ref{thm:uppbndgen} improves the bound $\chi(U(m,R,\delta)) \leq 2^{O(R)} \cdot \log^{(\delta)}m$ of \cite{haramaty_deterministic_2016} to $\chi(U(m,R,\delta)) \leq 2^{O(2^{2R})} \cdot \log^{(2\delta)}m$.
  \begin{theorem}
    \label{thm:uppbndgen}
    If $\log^{(2\delta-2)} m \geq 2^{2^{2+2R}}$, then $\chi(U(m,R,\delta)) \leq 2^{2^{2+2R}} \cdot \log^{(2\delta)} m$.
  \end{theorem}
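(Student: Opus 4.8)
The plan is to prove the bound by induction on $\delta$, peeling off one coordinate of the chains at each step via the truncation homomorphism of (\ref{eq:definephi}). Throughout, abbreviate $c = 2^{2+2R}$, so that the goal is to show $\chi(U(m,R,\delta)) \leq 2^{c}\log^{(2\delta)}m$ whenever $\log^{(2\delta-2)}m \geq 2^{c}$. For the base case $\delta = 1$, observe that $\langle \alpha, A_1\rangle \mapsto \alpha$ is a graph homomorphism from $U(m,R,1)$ to the complete graph $K_m$ (an edge forces $\alpha \neq \beta$), and that the image of the neighborhood of $\langle \alpha, A_1\rangle$ lies in $A_1 \setminus \{\alpha\}$, hence has size at most $R - 1 \leq 2^{2R}$. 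Since the hypothesis $m \geq 2^{c}$ gives $\chi(K_m) = m > 2$, Theorem \ref{thm:homoupp} applies and yields $\chi(U(m,R,1)) \leq \lceil 2^{R}\log\log m\rceil$ (or, with the bound as derived in the proof of that theorem, $\lceil R\,2^{R}\log\log m\rceil$); either way this is at most $2^{c}\log^{(2)}m$, because $\log\log m \geq 1$ and $2^{c}$ dwarfs the leading constant.

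For the inductive step, fix $\delta \geq 2$, set $G = U(m,R,\delta-1)$ and $H = U(m,R,\delta)$, and let $\phi : H \to G$ be the homomorphism of (\ref{eq:definephi}). If $\chi(G) \leq 2$, then since $\phi$ is a homomorphism we get $\chi(H) \leq \chi(G) \leq 2 \leq 2^{c}\log^{(2\delta)}m$ (the hypothesis makes $\log^{(2\delta)}m \geq 1$), and we are done; so assume $\chi(G) > 2$. The hypothesis $\log^{(2\delta-2)}m \geq 2^{c}$ implies $\log^{(2(\delta-1)-2)}m = \log^{(2\delta-4)}m \geq 2^{2^{2^{c}}} \geq 2^{c}$, so the inductive hypothesis applies to $\delta - 1$ and gives $\chi(G) \leq 2^{c}\log^{(2\delta-2)}m$. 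By Lemma \ref{lem:boundtsetgen}, $|\phi(N(v))| \leq 2^{2R} =: r$ for every $v \in V(H)$, so Theorem \ref{thm:homoupp} gives $\chi(H) \leq \lceil 2^{r}\log\log\chi(G)\rceil$. To finish, bound $\log\log\chi(G)$: since $\log^{(2\delta-1)}m \geq c$, we have $\log\chi(G) \leq c + \log^{(2\delta-1)}m \leq 2\log^{(2\delta-1)}m$ and hence $\log\log\chi(G) \leq 1 + \log^{(2\delta)}m \leq 2\log^{(2\delta)}m$. Plugging in, $\chi(H) \leq \lceil 2^{r+1}\log^{(2\delta)}m\rceil \leq (2^{r+1}+1)\log^{(2\delta)}m \leq 2^{c}\log^{(2\delta)}m$, the last step being the inequality $2^{2^{2R}+1}+1 \leq 2^{4\cdot 2^{2R}} = 2^{c}$. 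This closes the induction.

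The argument is essentially routine once one has Lemma \ref{lem:boundtsetgen} in hand: that lemma is exactly what forces a single application of $\phi$ to ``cost'' two iterated logarithms (one of them absorbed into the drop from $\log^{(2\delta-2)}$ to $\log^{(2\delta)}$ incurred by passing through $\log\log$ in Theorem \ref{thm:homoupp}) and a factor of $2^{2^{2R}}$ in front. The points needing genuine care are the iterated-logarithm bookkeeping---in particular checking that the precondition $\log^{(2(\delta-1)-2)}m \geq 2^{c}$ of the inductive hypothesis really does follow from $\log^{(2\delta-2)}m \geq 2^{c}$, and that quantities like $c + \log^{(2\delta-1)}m$ and $1 + \log^{(2\delta)}m$ can each be absorbed into a factor of $2$---together with verifying that the exponent $2^{2+2R} = 4\cdot 2^{2R}$ is generous enough to swallow the ceiling, the spare factor of $2$, and (if one uses the form of Theorem \ref{thm:homoupp} actually proved above rather than the cleaner stated form) the extra factor of $r = 2^{2R}$.
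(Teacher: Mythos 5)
Your proof is correct and follows essentially the same route as the paper's: induction on $\delta$ via the truncation homomorphism of (\ref{eq:definephi}), bounding $|\phi(N(\cdot))|$ by Lemma \ref{lem:boundtsetgen}, and invoking Theorem \ref{thm:homoupp} with the usual iterated-logarithm bookkeeping. The only differences are cosmetic: you base at $\delta=1$ rather than $\delta=0$ (where $U(m,R,0)=K_m$ makes the base case immediate), you explicitly dispatch the $\chi(G)\leq 2$ edge case and verify the inductive precondition, and there is a harmless slip of $2^R$ for $2^{R-1}$ in the base case.
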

  \begin{proof}
    We use induction on $\delta$. For the base case $\delta = 0$, we have that $U(m,R,\delta) = K_m$, the complete graph on $m$ vertices. Then $\chi(K_m) = m = \log^{(0)}m$.

    Now suppose the result is true for $\delta - 1$. Let $G = U(m,R,\delta-1)$, so that $\chi(G) \leq 2^{2^{2 + 2R}} \cdot \log^{(2\delta - 2)} m$. Let $H = U(m,R,\delta)$. Consider the graph homomorphism $\phi : H \ra G$ defined in (\ref{eq:definephi}), which is complete by Lemma \ref{lem:tset}. Now consider any $(v, S) \in \MF_{G, \phi}$ (recall that $\MF_{G,\phi}$ is the set of all pairs $(\phi(\MA), \phi(N(\MA)))$, where $\MA \in V(U(m,R,\delta))$). By Lemma \ref{lem:boundtsetgen}, we have that $|S| \leq 2^{2R}$. By Theorem \ref{thm:homoupp}, we have that
    $$
    \chi(U(m,R,\delta)) \leq \lceil 2^{2R} 2^{2^{2R}} \log \log \chi(U(m,R,\delta-1)) \rceil.
    $$
    Since $2^{2^{2 + 2R}} \leq \log^{(2\delta - 2)}m$, the above equation implies that
    $$
\chi(U(m,R,\delta)) \leq \left\lceil 2^{2R} 2^{2^{2R}} (1 + \log^{(2\delta)} m) \right\rceil \leq 2^{2+2R} 2^{2^{2R}} \log^{(2\delta)}m \leq 2^{2^{2+2R}} \log^{(2\delta)}m.
$$
\end{proof}
We will occasionally write the upper bound in Theorem \ref{thm:uppbndgen} as $\chi(U(m,R,\delta)) \leq \\ 2^{2^{2+2R}} \log^{(2\delta)}m + o(1)$ (as $m \ra \infty$) since for fixed $R, \delta$, the upper bound holds for sufficiently large $m$.

Next we explain how to improve the upper exponent in the upper bound of Theorem \ref{thm:uppbndgen} from $2 + 2R$ to to $2 + R \cdot \frac{r}{r-1}$ for a particular subgraph of $U(m,R,\delta)$ defined by a parameter $r$. In particular, given $r, m, \sigma, \delta \in \BN$ with $r \geq 2$, we define the graph $W_r(m,\sigma,\delta)$ as follows:
$$
V = \{ \langle \alpha, A_1, \ldots, A_\delta \rangle : \alpha \in A_1 \subset A_2 \subset \cdots \subset A_\delta \subset [m], \ \ \forall 1 \leq i \leq \delta, |A_i| = r^{i-1} \cdot r^{\sigma}\},
$$
and 
\begin{eqnarray}
\label{eq:chainedge}
E &=&\{(\langle \alpha, A_1, \ldots, A_\delta \rangle, \langle \beta, B_1, \ldots, B_\delta \rangle) : \nonumber\\
&& \alpha \neq \beta, \alpha \in B_1, \beta \in A_1, \ \ \forall 1 \leq i \leq \delta - 1 : A_i \subseteq B_{i+1}, B_i \subseteq A_{i+1}\}.
\end{eqnarray}
We refer the reader to Table \ref{tab:cgs} for a summary of the chain graphs considered in this paper, including $W_r(m,\sigma,\delta)$. Note that for any vertex (chain) $\mathcal A \in V(W_r(m,\sigma,\delta))$, we have that $sz(\mathcal A) = r^{\sigma + \delta - 1}$, 
so $W_r(m,\sigma,\delta)$ is an induced subgraph of $U(m,r^{\sigma+\delta-1},\delta)$. 
Therefore, by Theorem \ref{thm:uppbndgen}, we have that $\chi(W_r(m,\sigma,\delta)) \leq 2^{2^{2 + 2 r^{\sigma + \delta - 1}}} \cdot \log^{(2\delta)}m$ for sufficiently large $m$. In Theorem \ref{thm:uppbndexp}, we prove that we can improve the upper bound to $2^{2^{2 + \frac{r+1}{r} \cdot r^{\sigma + \delta - 1}}}$.

\begin{table}[]
  \centering

\begin{tabular}{|>{\centering\arraybackslash}m{1.8cm}|>{\centering\arraybackslash}m{4.2cm}|>{\centering\arraybackslash}m{3.7cm}|>{\centering\arraybackslash}m{4.4cm}|}
    \hline
    Graph & Vertex growth condition & $\chi$ upper bound & $\chi$ lower bound \\\hline
   $U(m,R,\delta)$ &  $|A_\delta| \leq R$ & $2^{2^{2+2R}} \log^{(2\delta)}m + o(1)$ & $2^{\frac{R-2\delta-4}{2}} \log^{(2\delta)}m + o(1)$   \\\hline
   $W_r(m,\sigma,\delta)$ &  $|A_i| = r^{\sigma + i-1}, 1 \leq i \leq \delta$ & $2^{2^{2+r^{\sigma+\delta-2}(r+1)}}  \log^{(2\delta)}m$ &  $2^{\frac{r^{\sigma+\delta-2}(r-1)-5}{2}} \log^{(2\delta)} m  + o(1)$  \\\hline
   $Y(m,\delta)$ & $|A_i| = 2i+1$, $1 \leq i \leq \delta$   & $2^{2^{2 + 2(2\delta+1)}}\log^{(2\delta)}m$  & $\log^{(2\delta)}m$  \\\hline
   $Z(m,R,\delta)$ & $|A_i| = 2i+1$, $1 \leq i \leq \delta-1$, $|A_\delta| = R$& $2^{2^{2+2R}}\log^{(2\delta)}m + o(1)$& $2^{\frac{R-2\delta-4}{2}} \log^{(2\delta)}m + o(1)$\\\hline
\end{tabular}
\caption{Summary of chain graphs considered in this paper. In the second column (``Vertex growth condition'') a typical vertex of any of these graphs is denoted by $\langle \alpha, A_1, \ldots, A_\delta \rangle$, with $\alpha \in A_1 \subseteq \cdots \subseteq A_\delta \subseteq [m]$. In some cells, the best known known upper or lower bounds are determined by subgraph relations (for instance, $Y(m,\delta)$ is a subgraph of $U(m,2\delta+1,\delta)$, so $\chi(Y(m,\delta)) \leq \chi(U(m,2\delta+1,\delta))$).}
\label{tab:cgs}
\end{table}

\begin{theorem}
\label{thm:uppbndexp}
For $r, \sigma, \delta, m\in \BN$ with $r \geq 2$ and $\log^{(2\delta)}m\geq 1$, we have $\chi(W_r(m,\sigma,\delta)) \leq 2^{2^{2+r^{\sigma + \delta - 2}(r+1)}} \cdot \log^{(2\delta)}m$.
\end{theorem}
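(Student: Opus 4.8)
The plan is to run the same induction on $\delta$ used in the proof of Theorem~\ref{thm:uppbndgen}, but specialized to the graphs $W_r$ and equipped with a sharper bound on $|\phi(N(\cdot))|$ that exploits the fact that in $W_r(m,\sigma,\delta)$ the set sizes $|A_i|$ are pinned to the value $r^{\sigma+i-1}$ rather than merely bounded by $R$. Define $\phi : W_r(m,\sigma,\delta) \to W_r(m,\sigma,\delta-1)$ by $\phi(\langle \alpha, A_1, \ldots, A_\delta\rangle) = \langle\alpha, A_1, \ldots, A_{\delta-1}\rangle$; reading off (\ref{eq:chainedge}) shows this is a graph homomorphism (in fact complete, by the argument of Lemma~\ref{lem:tset} together with Lemma~\ref{lem:restindsub}, although for an upper bound only the homomorphism property is used, since Theorem~\ref{thm:homoupp} rests on Lemma~\ref{lem:colorupbound}).

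The first step is the analogue of Lemma~\ref{lem:boundtsetgen}: for every $\MA = \langle\alpha, A_1, \ldots, A_\delta\rangle \in V(W_r(m,\sigma,\delta))$ with $\delta \ge 1$, one has $|\phi(N(\MA))| \le 2^{r^{\sigma+\delta-2}(r+1)}$. To see this, every tuple $\langle\beta, B_1, \ldots, B_{\delta-1}\rangle \in \phi(N(\MA))$ satisfies $\beta \in A_1$ and, from the edge relation, $A_{j-1} \subseteq B_j \subseteq A_{j+1}$ for $1 \le j \le \delta-1$ (with $A_0 := \{\alpha\}$), so
$$
|\phi(N(\MA))| \ \le\ |A_1| \cdot \prod_{j=1}^{\delta-1} 2^{|A_{j+1}| - |A_{j-1}|} \ =\ r^\sigma \cdot 2^{\,|A_{\delta-1}| + |A_\delta| - |A_0| - |A_1|},
$$
the exponent being obtained by telescoping $\sum_{j=1}^{\delta-1}(|A_{j+1}|-|A_{j-1}|)$. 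For $\delta \ge 2$, plugging in $|A_0|=1$, $|A_1|=r^\sigma$, $|A_{\delta-1}| = r^{\sigma+\delta-2}$, $|A_\delta| = r^{\sigma+\delta-1}$ and absorbing the leftover factor via the elementary inequality $r^\sigma \le 2^{r^\sigma-1}$ gives a bound of at most $2^{r^{\sigma+\delta-2}+r^{\sigma+\delta-1}} = 2^{r^{\sigma+\delta-2}(r+1)}$; the case $\delta=1$ is degenerate (empty product, bound $r^\sigma$) and even easier. The telescoping is the entire point: bounding each $|B_j|$-sized set crudely by $2^{|A_\delta|}$, as in Lemma~\ref{lem:boundtsetgen}, would cost essentially a factor of $2$ in the exponent and merely reproduce Theorem~\ref{thm:uppbndgen}.

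With this bound the induction is routine. The base case $\delta = 0$ is $W_r(m,\sigma,0) = K_m$, so $\chi = m = \log^{(0)}m$, which meets the claimed bound. For the inductive step, set $G = W_r(m,\sigma,\delta-1)$ and $H = W_r(m,\sigma,\delta)$; since $\log^{(2\delta)}m \ge 1$ implies $\log^{(2\delta-2)}m \ge 1$, the inductive hypothesis $\chi(G) \le 2^{2^{2+r^{\sigma+\delta-3}(r+1)}}\log^{(2\delta-2)}m$ is available. If $\chi(G) \le 2$ then $\chi(H) \le \chi(G) \le 2$ (the endpoints of an edge of $H$ have distinct first coordinates, so $\phi$ maps edges to non-loops of the loopless graph $G$), which is well within the claimed bound; so assume $\chi(G)>2$. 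Theorem~\ref{thm:homoupp}, applied with parameter $\rho := 2^{r^{\sigma+\delta-2}(r+1)}$, gives $\chi(H) \le \lceil \rho\, 2^{\rho}\log\log\chi(G)\rceil$. Substituting the inductive bound for $\chi(G)$, expanding the double logarithm via $\log(a+b)\le 1+\log a+\log b$, and using $\log^{(2\delta)}m \ge 1$ repeatedly to collapse all additive error terms into one factor of $\log^{(2\delta)}m$, yields $\chi(H) \le \big(\rho\, 2^{\rho}\cdot Q(r,\sigma,\delta) + 1\big)\log^{(2\delta)}m$ where $Q(r,\sigma,\delta)$ grows only singly exponentially in $\sigma+\delta$; since the target coefficient is $2^{2^{2+r^{\sigma+\delta-2}(r+1)}} = 2^{4\rho}$ and $2^{4\rho}/(\rho\,2^{\rho}) = 2^{3\rho}/\rho$ grows doubly exponentially in $\sigma+\delta$, the inequality $\rho\, 2^{\rho}Q(r,\sigma,\delta) + 1 \le 2^{4\rho}$ holds for all admissible $r,\sigma,\delta$, closing the induction.

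The main obstacle is the counting lemma, and specifically making the telescoping sum come out to exactly $r^{\sigma+\delta-1}+r^{\sigma+\delta-2}$; the final constant bookkeeping is tedious but presents no real difficulty, since the extra ``$+2$'' in the double exponent of the target leaves a doubly-exponential gap that swamps all the polynomial and singly-exponential slack accumulated along the way — which is also why the hypothesis can be relaxed all the way to $\log^{(2\delta)}m \ge 1$, rather than requiring the much stronger lower bound on $m$ appearing in Theorem~\ref{thm:uppbndgen}.
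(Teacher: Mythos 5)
Your proof is correct and follows essentially the same strategy as the paper's: prove the sharpened counting bound $|\phi(N(\MA))| \le 2^{r^{\sigma+\delta-2}(r+1)}$ by telescoping (this is the paper's Lemma~\ref{lem:boundtset}, whose \emph{statement} has a typo with $r-1$ in place of $r+1$, though its proof and the surrounding text agree with your $r+1$), then induct on $\delta$ applying Theorem~\ref{thm:homoupp} with $\rho = 2^{r^{\sigma+\delta-2}(r+1)}$. The only differences are in the final bookkeeping: the paper splits into two cases according to whether $2^{2^{2+r^{\sigma+\delta-3}(r+1)}} \le \log^{(2\delta-2)}m$ (and in the complementary case falls back to the trivial bound $\chi(W_r(m,\sigma,\delta)) \le \chi(W_r(m,\sigma,\delta-1))$, using monotonicity of $\log\log n/n$), whereas you absorb the extra additive term $2+r^{\sigma+\delta-3}(r+1)$ in $\log\log\chi(G)$ uniformly into the slack afforded by the doubly-exponential gap $2^{3\rho}/\rho$; both work, and your route avoids the case split. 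You also explicitly dispatch the $\chi(G)\le 2$ case required by the hypothesis of Theorem~\ref{thm:homoupp}, a detail the paper's proof (of both this theorem and Theorem~\ref{thm:uppbndgen}) does not mention.
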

The proof of Theorem \ref{thm:uppbndexp} is very similar to that of Theorem \ref{thm:uppbndgen}, except it uses the exponential growth of the chains in $V(W_r(m,\sigma,\delta))$, and can be found in Appendix \ref{apx:pfs}.

\section{Lower bounds}
\label{sec:lowbnd}
In this section we prove lower bounds on the chromatic number of $U(m,R,\delta)$ by establishing lower bounds on the chromatic number of certain induced subgraphs of $U(m,R,\delta)$. The key ingredient to doing so is Theorem \ref{thm:homolow}, which we prove first. We state it below in a slightly different form:
\begin{theorem}
\label{thm:reclowbnd}
Suppose $\phi : H \ra G$ is a complete graph homomorphism such that for any $w \in G$ and neighbors $u_1, \ldots, u_r$ of $w$, there is some $(w,S) \in \mathcal F_{G,\phi}$ such that $u_1, \ldots, u_r \in S$. Then if $\chi(H) \leq n$, we have that $\chi(G) \leq 2^{2n + 2^{n/2^{r-2}}}$.
\end{theorem}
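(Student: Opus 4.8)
The plan is to convert the hypothesis, via Lemma~\ref{lem:colorlobound}, into a $(G,n,\mathcal F_{G,\phi})$-independent system and then to squeeze an economical proper colouring of $G$ out of it --- in effect running the proof of Lemma~\ref{lem:makegraphindep} backwards, in the spirit of the corresponding argument of Erd\H{o}s et al.~\cite{erdos_coloring_1986}.

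First I would apply Lemma~\ref{lem:colorlobound}: since $\phi$ is complete and $\chi(H)\le n$, there is a $(G,n,\mathcal F_{G,\phi})$-independent system $\{A_{u,v}\}_{u<v,\,(u,v)\in E(G)}$. Next I would upgrade raw independence to a uniform witness property using the hypothesis on $\mathcal F_{G,\phi}$: given $v\in V(G)$ and $T\subseteq N(v)$ with $|T|\le r$, pick $(v,S)\in\mathcal F_{G,\phi}$ with $T\subseteq S$; since the defining expression for $(v,S)$ in the independence condition (in either of its two forms) is contained in the one for $T$, it follows that
$$
D_v(T)\ :=\ \bigcap_{u\in T,\ u<v}A_{u,v}\ -\ \bigcup_{w\in T,\ w>v}A_{v,w}\ \neq\ \emptyset
$$
(reading the intersection as $[n]$ when it is empty). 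In particular every singleton gives $\emptyset\neq A_{u,v}\neq[n]$, and for each $v$ the family $\{A_{u,v}:u<v,\ uv\in E\}\cup\{[n]\setminus A_{v,w}:w>v,\ vw\in E\}$ is $r$-wise intersecting.

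The witness property already yields a proper colouring: colour $v$ by $M(v):=\{A_{u,v}:u<v,\ uv\in E(G)\}$. On an edge $uv$ with $u<v$ one has $A_{u,v}\in M(v)$, whereas $D_u(\{u',v\})=A_{u',u}\setminus A_{u,v}\neq\emptyset$ for each neighbour $u'<u$ of $u$ shows $A_{u,v}\notin M(u)$; hence $M(u)\neq M(v)$. Since each $M(v)$ is pairwise intersecting and omits $\emptyset$, this uses fewer than $2^{2^{n}}$ colours, which already proves the bound when $r=2$. For $r\ge 3$ one must do better, and the plan is to compress $v\mapsto M(v)$ using the stronger $r$-wise structure exactly as the generating family $\mathscr C$ is used in Lemma~\ref{lem:makegraphindep}: fix a family $\mathscr C^{\ast}\subseteq\mathcal P([n])$ that is extremal for a Kleitman--Spencer-type independence condition tuned to $r$, so that $|\mathscr C^{\ast}|$ is bounded by the estimate of Theorem~\ref{thm:kindep} (of the shape $2^{n/2^{r-2}}$, up to the constants absorbed in that theorem); then colour $v$ by the pair consisting of (a) the subset of $\mathscr C^{\ast}$ that ``refines'' $M(v)$ in the appropriate sense, and (b) at most two further subsets of $[n]$ (accounting for the factor $2^{2n}$) that disambiguate those sets $A_{u,v}$ not seen by $\mathscr C^{\ast}$. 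Maximality of $\mathscr C^{\ast}$ together with the witness property should force, via the same cancellation as for $v\mapsto M(v)$, that adjacent vertices receive distinct colours, and multiplying the counts gives $\chi(G)\le 2^{2n}\cdot 2^{|\mathscr C^{\ast}|}\le 2^{2n+2^{n/2^{r-2}}}$.

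I expect the main obstacle to be precisely this compression step --- the reverse of Lemma~\ref{lem:makegraphindep}. Concretely, from only the $r$-wise witness property, rather than genuine $(r-2)$-independence of the sets $A_{u,v}$ themselves, one must extract the right notion of a member of $\mathscr C^{\ast}$ ``refining'' $M(v)$, and must choose the auxiliary subsets so that adjacent vertices stay separated even when degrees are unbounded and the $A_{u,v}$ repeat heavily (so that $|M(v)|$ can be tiny while $\deg v$ is huge). Getting the bookkeeping to line the exponent up with $n/2^{r-2}$ is where the Kleitman--Spencer bound of Theorem~\ref{thm:kindep} does its work, and this is the part I would expect to require the most care, following Section~4 (and the proof of Theorem~2.4) of \cite{erdos_coloring_1986}.
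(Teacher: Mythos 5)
Your setup is sound and your base case is exactly the paper's: Lemma~\ref{lem:colorlobound} yields a $(G,n,\mathcal F_{G,\phi})$-independent system, and for $r=2$ the colouring $v\mapsto M(v)=\{A_{u,v}:u<v,\ uv\in E(G)\}$ is proper by the pairwise-witness property, giving $\chi(G)\le 2^{2^n}$. But the inductive step you sketch for $r\ge 3$ is not a proof, and I think the specific route you propose would not work. You want to ``compress'' $M(v)$ through a fixed Kleitman--Spencer family $\mathscr C^\ast\subseteq\mathcal P([n])$, but the sets $A_{u,v}$ delivered by Lemma~\ref{lem:colorlobound} are arbitrary: they need bear no relation to any pre-chosen $r$-independent family, so there is no canonical way for a member of $\mathscr C^\ast$ to ``refine'' $M(v)$, nor any reason two neighbouring vertices would be separated by such a refinement plus ``at most two further subsets.'' Kleitman--Spencer is the engine of the \emph{upper}-bound direction (Lemma~\ref{lem:makegraphindep}/Theorem~\ref{thm:homoupp}); it plays no role in the paper's proof of this theorem, and importing it here is the wrong tool.

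The idea you are missing is the partition-by-type recursion of Lemma~\ref{lem:inductr}. Declare $v$ to be of \emph{type} $A$ (for $A\subseteq[n]$, $|A|\le\lfloor n/2\rfloor$) if some $u<v$ has $A_{u,v}=A$ or some $w>v$ has $A_{v,w}=[n]\setminus A$. Every edge $uv$ ($u<v$) forces at least one endpoint to have a type, so the untyped vertices $T_{n'+1}$ form an independent set, and the typed ones split into at most $2^{n-1}$ classes $T_1,\dots,T_{n'}$. Within $T_i$, intersecting each $A_{u,v}$ with $A_i$ yields a $(G[T_i],\lfloor n/2\rfloor,\mathcal F_{G[T_i]})$-independent system, and --- this is the crucial trade --- the witness element $u_v$ certifying $v$'s type eats one of the $r$ ``slots'' in the hypothesis, so $\mathcal F_{G[T_i]}$ only satisfies the $(r-1)$-neighbour condition. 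Recursing on $(n,r)\mapsto(\lfloor n/2\rfloor,r-1)$ down to the $r=2$ base case, and multiplying by the $\le 2^{n-1}+1$ parts at each level, is exactly what produces the exponent $2n+2^{n/2^{r-2}}$. Without this simultaneous halving of $n$ and decrementing of $r$, I don't see how your compression sketch could land on that shape of bound.
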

The proof of Theorem \ref{thm:reclowbnd} (equivalently, Theorem \ref{thm:homolow}) is similar to the proof of Theorem 2.3 in \cite{erdos_coloring_1986}.
The bulk of this proof is contained in Lemma \ref{lem:inductr}. 
\begin{lemma}
\label{lem:inductr}
Suppose $G$ is a graph and $\mathcal F_G$ is a collection of pairs $(v,S)$ (where $v \in V(G)$ and $S \subset V(G)$) such that for any $v \in G$ and neighbors $u_1, \ldots, u_r$ of $v$, there is some $(v,S) \in \mathcal F_{G,\phi}$ such that $u_1, \ldots, u_r \in S$. Also suppose that a $(G,n,\mathcal F_{G})$-independent system exists. Then there is a partition of $V(G)$ into $n'+1 \leq 2^{n-1}+1$ sets, say $T_1, T_2, \ldots, T_{n'}, T_{n'+1}$, and collections $\mathcal F_{G[T_1]}, \ldots, \mathcal F_{G[T_{n'}]}$ such that:
\begin{enumerate}
\item $T_{n'+1}$ is an independent set in $G$.
\item For $1 \leq i \leq n'$, 
there is a $(G[T_i],\lfloor n/2 \rfloor, \mathcal F_{G[T_i]})$-independent system.
\item For each $1 \leq i \leq n'$, for any $v \in T_i$, and neighbors $u_1, \ldots, u_{r-1}$ in $G[T_i]$, there is some $(v,S) \in \mathcal F_{G[T_i]}$ such that $u_1, \ldots, u_{r-1} \in S$.
\end{enumerate}
\end{lemma}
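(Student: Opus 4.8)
The plan is to mimic the classical dichotomy argument of Erd\H{o}s et al.: given a $(G,n,\mathcal F_G)$-independent system $\{A_{u,v}\}$, split the ground set $[n]$ into two halves $[n] = L \sqcup M$ with $|L| = \lfloor n/2 \rfloor$ and $|M| = \lceil n/2 \rceil$, and use the $2^{|M|} \le 2^{\lceil n/2 \rceil}$ possible ``patterns'' of a color inside $M$ to cut $V(G)$ into pieces. Concretely, for each vertex $v$ of $G$, the independence hypothesis guarantees a canonical element $c(v) \in [n]$: namely, for $v > \min S$ (for a suitable $(v,S) \in \mathcal F_G$ witnessing all the relevant neighbors), pick $c(v) \in \bigcap_{u<v, u\in S, (u,v)\in E} A_{u,v} - \bigcup_{w>v, w\in S, (v,w)\in E} A_{v,w}$, and for $v < \min S$ pick $c(v) \in [n] - \bigcup_{w>v} A_{v,w}$. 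One checks, exactly as in the proof of Lemma \ref{lem:colorupbound}, that $c$ is a proper coloring of $G$ — so in particular the fibers $c^{-1}(j)$ are independent sets. The idea is then: the vertices $v$ with $c(v) \in L$ will be grouped by ``which subset of $M$ their neighbor-colors avoid,'' and the vertices with $c(v) \in M$ will be thrown into the leftover independent set $T_{n'+1}$ (these form an independent set since $c$ restricted to them is still proper and takes only $\lceil n/2\rceil$ values — actually a cleaner choice is to throw away one half entirely and keep $T_{n'+1}$ as the union of those color classes, which is a union of $\lceil n/2 \rceil$ independent sets; I will need to be slightly careful here and instead make $T_{n'+1}$ genuinely independent, perhaps by a further refinement or by choosing the split so that the discarded part is a single color class — see the obstacle below).

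For the main part, partition the vertices $v$ with $c(v) \in L$ according to the ``trace'' $\tau(v) := \{ c(v)\} \cup \big(\{\,\text{colors in } M \text{ appearing on } A_{v,w} \text{ for } w > v\,\}\big)$, or more precisely by the pair consisting of $c(v)$ together with the relevant restriction to $M$; there are at most $\lfloor n/2\rfloor \cdot 2^{\lceil n/2\rceil}$ such classes, which I will want to bound by $2^{n-1}$ (valid once $n \ge 2$; the small cases are handled trivially). Index the nonempty classes as $T_1,\dots,T_{n'}$ with $n' \le 2^{n-1}$. For each $i$, I define the new collection $\mathcal F_{G[T_i]}$ by restricting the pairs $(v,S) \in \mathcal F_G$ to $(v, S \cap T_i)$, keeping only those $v \in T_i$, and I define the new system of sets indexed by edges of $G[T_i]$ by $A'_{u,v} := A_{u,v} \cap M$ re-encoded as a subset of $[\lfloor n/2\rfloor]$ after identifying $M$ with $[\lceil n/2 \rceil]$ — wait, the target size should be $\lfloor n/2\rfloor$, so I will instead keep the $L$-coordinate: within a fixed class $T_i$ every vertex has the same $M$-pattern, so the sets $A_{u,v} \cap L$, reindexed into $[\lfloor n/2\rfloor]$, are what carry the remaining information. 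I then verify properties (2) and (3): property (3) is essentially immediate from the restriction (given $v \in T_i$ and $r-1$ neighbors in $G[T_i]$, append to them any one further $G$-neighbor — or use that $r$ neighbors in $G$ suffice — to land in a pair $(v,S) \in \mathcal F_G$, then intersect $S$ with $T_i$); property (2) is the content — one must show $\bigcap A'_{u,v} - \bigcup A'_{v,w} \ne \emptyset$ inside $[\lfloor n/2\rfloor]$, which follows because $c(v)$ itself lies in $L$, lies in each $A_{u,v}$ (as $u \in \phi(N(v))$-type reasoning / as in Lemma \ref{lem:colorlobound}), and lies in no $A_{v,w}$ with $w > v$ — but here I only have $r-1$ slots, which is exactly why the ``$M$-pattern'' trick buys down one unit of the locality parameter $r$.

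The step I expect to be the genuine obstacle is getting property (2) honestly with the locality parameter dropping from $r$ to $r-1$ while the ground-set size drops from $n$ to $\lfloor n/2\rfloor$: the naive restriction loses either the right bound or the right locality, and the resolution (as in \cite{erdos_coloring_1986}) is precisely the bookkeeping that the $M$-coordinate of $c(v)$ and of the sets $A_{v,w}$ is \emph{constant} across each class $T_i$, so that all of the ``$M$-information'' has already been spent defining the partition and only an $(r-1)$-fold intersection-minus-union in the $L$-coordinate remains to be satisfied, with witness $c(v)$. A secondary nuisance is ensuring $T_{n'+1}$ is a single independent set rather than a union of independent sets; I will handle this by taking $T_{n'+1} = c^{-1}(M)$ only in the case it happens to be independent, and otherwise absorbing the $M$-colored vertices into the $T_i$'s by a symmetric construction — or, following the source argument, by observing that the count $2^{n-1}+1$ already has slack to iterate the split one more level if needed. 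The bound $n'+1 \le 2^{n-1}+1$ then comes out of $\lfloor n/2\rfloor \cdot 2^{\lceil n/2\rceil} \le 2^{n-1}$ for $n \ge 2$, with $n \le 1$ trivial (then $G$ is edgeless and we take $n'=0$, $T_1 = \emptyset$, $T_{n'+1} = V(G)$).
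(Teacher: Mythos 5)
Your approach diverges from the paper's in a way that leaves the key step unresolved, and you correctly sense where the trouble is. The paper does not assign each vertex a canonical \emph{element} $c(v) \in [n]$ and split by which half $c(v)$ lands in; instead it assigns each vertex a canonical \emph{set}: $v$ is declared ``of type $A$'' (for $A \subseteq [n]$ with $|A| \le \lfloor n/2 \rfloor$) if either $A_{u,v} = A$ for some $u < v$, or $A_{v,w} = [n] - A$ for some $w > v$. This is the idea you are missing, and it resolves both of your flagged obstacles at once.

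First, $T_{n'+1}$: since for every edge $(u,v)$ with $u<v$ either $|A_{u,v}| \le \lfloor n/2\rfloor$ (making $v$ typed) or $|[n]-A_{u,v}| \le \lfloor n/2\rfloor$ (making $u$ typed), every edge has a typed endpoint, so the untyped vertices automatically form one independent set. Your $T_{n'+1} = c^{-1}(M)$ is, as you note, a union of $\lceil n/2\rceil$ independent sets, and neither of your patches (hope it is independent, or iterate) is an actual argument. Second, and more importantly, property (2): with the type-based partition there is no re-encoding into $[\lfloor n/2\rfloor]$ at all. The new ground set for $T_i$ is simply $A_i$ itself, and the system is $\{A_{u,v}\cap A_i\}$. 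For a vertex $v\in T_i$ one fixes a witness neighbor $u_v$ with $A_{u_v,v}=A_i$ (or $A_{v,u_v}=[n]-A_i$), defines $\mathcal F_{G[T_i]}$ to consist only of those $(v,S\cap T_i)$ coming from $(v,S)\in\mathcal F_G$ with $u_v\in S$, and then the nonempty set produced by $(G,n,\mathcal F_G)$-independence for $(v,S)$ is automatically contained in $A_i$ (because $A_i$ appears as an intersection term, or $[n]-A_i$ as a union term). By contrast, your scheme fixes a single element $c(v)$ chosen with respect to one $(v,S)$, and then tries to use it as the nonemptiness witness for every $(v,\hat S)\in\mathcal F_{G[T_i]}$; but $c(v)$ is only guaranteed to lie in $A_{u,v}$ for $u\in S$ and avoid $A_{v,w}$ for $w\in S$, not for a different $\hat S$, so this does not close. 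Finally, your property (3) says ``append any one further $G$-neighbor,'' but it must be $u_v$ specifically so that the resulting $(v,S)$ yields a pair actually present in $\mathcal F_{G[T_i]}$; this is exactly the bookkeeping that ties properties (2) and (3) together and is where the locality parameter genuinely drops from $r$ to $r-1$.
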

\begin{proof}
Associating $V(G)$ with $\{1, \ldots, |V(G)|\}$, let us denote a $(G,n,\mathcal F_{G})$-independent system by $\{A_{u,v} : 1 \leq u < v \leq |V(G)|, \ u,v \in V(G)\}.$ For $v \in V(G)$, and $A \subset [n]$ with $|A| \leq \lfloor n/2 \rfloor$, define $v$ to be of {\it type} $A$ if one of the statements below holds:
\begin{enumerate}
\item There exists $u < v$ with $A_{u,v} = A$, or
\item There exists $w > v$ with $A_{v,w} = [n] - A$.
\end{enumerate}
Note that for any vertices $u < v$ with $(u,v) \in E(G)$, either $v$ is of type $A_{u,v}$ or $u$ is of type $[n] - A_{u,v}$, where the former holds if $|A_{u,v}| \leq \lfloor n/2 \rfloor$, and the latter holds if $|A_{u,v}| \geq \lceil n/2 \rceil$. Therefore, if $v$ is not of type $A$ for any $A$, then each of its neighbors is of type $A$ for some $A$, meaning that the set of vertices that are not of type $A$ for any $A$ form an independent set in $G$; let this set be $T_{n'+1}$. The number of sets $A \subset [n]$ with $|A| \leq \lfloor n/2 \rfloor$ is at most $2^{n-1}$, meaning that if we index such sets by $A_1, A_2, \ldots, A_{n'}$ with $n' \leq 2^{n-1}$ we may let $T_i$ be the set of all vertices of type $A_i$ (if a vertex is of type $A$ for more than 1 set $A$, we pick $A$ arbitrarily).

We next define the collections $\mathcal F_{G[T_i]}$ as follows. Consider any pair $(v,S) \in \mathcal F_G$ that has the property that $v \in T_i$ for some $i$. There now is either some $u_v < v$ such that $A_{u_v,v} = A_i$ or some $u_v > v$ such that $A_{v,u_v} = [n] - A_i$. We will now include the pair $(v,S \cap T_i)$ in $\mathcal F_{G[T_i]}$ if and only if $u_v \in S$. In other words, we have
$$
\mathcal F_{G[T_i]} = \left\{ (v,S \cap T_i)  \ \ : \ \ (v,S) \in \mathcal F_G, \ v \in T_i, \ u_v \in S\right\}.
$$

Next, for $1 \leq i \leq n'$, consider the collection $\mathcal F_{G[T_i]}$, and pick any $v \in T_i$. 
Now consider any neighbors $u_1, \ldots, u_{r-1}$ of $v$ in $G[T_i]$. Since $v$ and $u_v$ are neighbors, we know that there exists some $(v,S) \in \mathcal F_G$ such that $u_1, \ldots, u_{r-1}, u_v \in S$. Therefore, letting $\hat S = S \cap T_i$, we have that $(v,\hat S) \in \mathcal F_{G[T_i]}$ and that $u_1, \ldots, u_{r-1} \in \hat S$.

Finally, we claim that for each $1 \leq i \leq n'$ the system
$$
\{A_{u,v} \cap A_i \ \ : \ \ 1 \leq u < v \leq |V(G)|, \ u \in T_i, \ v \in T_i \}
$$
is $(G[T_i],\lfloor n/2 \rfloor, \mathcal F_{G[T_i]})$-independent. To see this consider any $(v,S) \in \mathcal F_{G[T_i]}$. We wish to show that
\begin{equation}
\label{eq:smallindepsys}
\left( \bigcap_{u < v, u \in S, (u,v) \in E(G[T_i])} A_{u,v} \cap A_i\right) - \left( \bigcup_{w > v, w \in S, (v,w) \in E(G[T_i])} A_{v,w} \cap A_i\right) \neq \emptyset
\end{equation}
if $v > \min S$, and that
\begin{equation}
\label{eq:smallindepsys2}
A_i - \left( \bigcup_{w > v, w \in S, (v,w) \in E(G[T_i])} A_{v,w} \cap A_i \right) \neq \emptyset
\end{equation}
otherwise.
We must consider two cases:
\begin{enumerate}
\item There is $u < v$ such that $A_{u,v} = A_i$. By construction of $\mathcal F_{G[T_i]}$, there is some $S'$ with $S \cup \{ u\} \subset S'$ such that $(v,S') \in \mathcal F_G$. If $v > \min S$, then (\ref{eq:smallindepsys}) is satisfied by $(G,n,\mathcal F_G)$-independence of $\{A_{u,v}\}$, as $A_i$ will be one of the terms in the intersection in the definition of $(G,n,\mathcal F_G)$-independence and $v > u \geq \min S'$. If $v < \min S$, then (\ref{eq:smallindepsys2}) is satisfied since $u<v$ and thus $A_i$ is again one term in the intersection in the definition of $(G,n,\mathcal F_G)$-independence.
\item There is $w > v$ such that $A_{v,w} = [n] - A_i$. By construction of $\mathcal F_{G[T_i]}$, there is some $S'$ with $S \cup \{w\} \subset S'$ such that $(v,S') \in \mathcal F_G$. If $v > \min S$, then $v > \min S'$ as well, so (\ref{eq:smallindepsys}) is satisfied by $(G,n,\mathcal F_G)$-independence of $\{A_{u,v}\}$, as $[n]-A_i$ will be one of the terms in the union in the definition of $(G,n,\mathcal F_G)$-independence. If $v < \min S$, then regardless of whether $v < \min S'$, (\ref{eq:smallindepsys2}) is satisfied since again $[n] - A_i$ is one of the terms in the union in the definition of $(G,n,\mathcal F_G)$-independence.
\end{enumerate}
We have shown that each of (1), (2), (3) in the statement of the lemma hold.
 \end{proof}
Now we prove Theorem \ref{thm:reclowbnd}.
\begin{proof}
Let us associate $V(G)$ with the set $\{1, 2, \ldots, |V(G)|\}$. By Proposition \ref{prop:indepequiv} we have that there is a $(G,n,\mathcal F_{G,\phi})$-independent system.

We now use induction on $r$ and $n$ to prove the following claim: 
\begin{claim}
\label{clm:inductstep}
Suppose there exists a $(G,n,\mathcal F_G)$-independent system such that for any set of $r$ neighbors $\{u_1, \ldots, u_r\}$ of any vertex $v \in V(G)$, there is some $(v,S) \in \mathcal F_G$ with $\{u_1, \ldots, u_r\} \subseteq S$. Then $\chi(G) \leq 2^{{2n} + 2^{n/2^{r-2}}}$. 
\end{claim}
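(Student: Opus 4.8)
The plan is to prove Claim~\ref{clm:inductstep} by induction on $r$, keeping ``for all graphs $G$, all collections $\mathcal F_G$, and all $n\in\BN$'' inside the induction so that $n$ is free to shrink. Lemma~\ref{lem:inductr} is the engine: it trades the hypothesis at $(n,r)$ for the same hypothesis at $(\lfloor n/2\rfloor,\,r-1)$ on each of at most $2^{n-1}$ induced subgraphs, plus one leftover independent set. So there are two things to do: the inductive step $r\ge 3$, which is bookkeeping on top of Lemma~\ref{lem:inductr}, and the base case $r=2$, which needs a separate direct argument and carries the real content.

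\emph{Inductive step.} Assume the claim for $r-1$. Given a $(G,n,\mathcal F_G)$-independent system with the property that every $r$ neighbors of every vertex lie in a common $S$ with $(v,S)\in\mathcal F_G$, apply Lemma~\ref{lem:inductr} to obtain the partition $T_1,\dots,T_{n'},T_{n'+1}$ with $n'\le 2^{n-1}$. Give $T_{n'+1}$ one fresh color. For each $i\le n'$, clauses (2) and (3) of Lemma~\ref{lem:inductr} are exactly the hypotheses of the claim for $G[T_i]$, $\mathcal F_{G[T_i]}$, with parameters $(\lfloor n/2\rfloor,r-1)$, so the induction hypothesis gives $\chi(G[T_i])\le 2^{2\lfloor n/2\rfloor+2^{\lfloor n/2\rfloor/2^{r-3}}}\le 2^{n+2^{n/2^{r-2}}}$ (using $2\lfloor n/2\rfloor\le n$ and $\lfloor n/2\rfloor/2^{r-3}\le n/2^{r-2}$). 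Coloring the $G[T_i]$ from disjoint palettes of that size,
$$\chi(G)\ \le\ 1+n'\cdot 2^{n+2^{n/2^{r-2}}}\ \le\ 1+2^{2n-1+2^{n/2^{r-2}}}\ \le\ 2^{2n+2^{n/2^{r-2}}},$$
which closes the step.

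\emph{Base case $r=2$.} We must show that a $(G,n,\mathcal F_G)$-independent system $\{A_{u,v}\}$ together with the property that any two neighbors of any vertex share a common $S$ forces $\chi(G)\le 2^{2n+2^n}$. The idea is that this ``two neighbors'' hypothesis is exactly what makes a certain family of subsets of $[n]$ \emph{intersecting}. For a vertex $v$ with $\deg(v)\ge 2$, put
$$\mathcal B_v\ =\ \{A_{u,v}:(u,v)\in E(G),\,u<v\}\ \cup\ \{[n]\setminus A_{v,w}:(v,w)\in E(G),\,w>v\}.$$
Applying $(G,n,\mathcal F_G)$-independence to the pair $(v,S)$ supplied by the hypothesis for a chosen pair of neighbors of $v$ shows, after checking the (few) cases, that every member of $\mathcal B_v$ is nonempty and that any two members intersect; hence $\mathcal B_v$ is a pairwise-intersecting family of nonempty subsets of $[n]$. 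Color each $v$ with $\deg(v)\ge 2$ by $\mathcal B_v$; this uses at most $2^{2^n}$ colors, one per possible family. It is proper: if $(u,v)\in E(G)$ with $u<v$ and $\deg(u),\deg(v)\ge 2$, then $A_{u,v}\in\mathcal B_v$ while $[n]\setminus A_{u,v}\in\mathcal B_u$; since $A_{u,v}$ and its complement are nonempty and disjoint and $\mathcal B_u$ is intersecting, $A_{u,v}\notin\mathcal B_u$, so $\mathcal B_v\ne\mathcal B_u$. Finally pass to the $2$-core $G^*$ of $G$: every vertex of $G^*$ has degree $\ge 2$ in $G$, so the construction colors $G^*$ properly, and $\chi(G)\le\max\{2,\chi(G^*)\}\le 2^{2^n}+2\le 2^{2n+2^n}$ for $n\ge 1$; for $n=0$ the independence axiom forces every vertex to have degree $\le 1$, giving $\chi(G)\le 2=2^{2\cdot 0+2^0}$.

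With Claim~\ref{clm:inductstep} proved, Theorem~\ref{thm:reclowbnd} follows by setting $\mathcal F_G=\mathcal F_{G,\phi}$: Proposition~\ref{prop:indepequiv} converts $\chi(H)\le n$ into a $(G,n,\mathcal F_{G,\phi})$-independent system, and the theorem's hypothesis is verbatim the hypothesis of the claim. The main obstacle is the base case $r=2$: once one spots that ``two neighbors in a common $S$'' is precisely what forces $\mathcal B_v$ to be intersecting, everything else is routine, though getting the degree-$\le 1$ vertices and the $n=0$ degeneracy right is a minor but unavoidable chore.
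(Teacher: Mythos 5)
Your proof is correct and follows the same two-part structure as the paper's: the inductive step is driven by Lemma~\ref{lem:inductr} in exactly the same way (including the arithmetic $\chi(G)\le 1+n'\cdot 2^{n+2^{n/2^{r-2}}}\le 2^{2n+2^{n/2^{r-2}}}$), and the whole thing reduces to a direct coloring in the base case $r=2$. The only substantive difference is how that base case is handled. The paper colors each $v$ by the family $B_v=\{A_{u,v}:u\in N(v),\ u<v\}$ and, for an edge $(v,w)$ with $v<w$, reads off directly from the independence condition applied to a pair $(v,S)\in\mathcal F_G$ with $u,w\in S$ that $A_{u,v}\setminus A_{v,w}\ne\emptyset$, hence $A_{v,w}\in B_w\setminus B_v$. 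You instead enlarge the family to $\mathcal B_v$ by adjoining the complements $[n]\setminus A_{v,w}$ for upper neighbors $w$, prove $\mathcal B_v$ is pairwise intersecting, and use $[n]\setminus A_{u,v}\in\mathcal B_u$ to conclude $A_{u,v}\notin\mathcal B_u$. This works, but it forces you to restrict to vertices of degree at least $2$ (since for a degree-$1$ vertex $\mathcal B_v$ need not be intersecting/nonempty in the required way) and then patch up via the $2$-core, plus a separate $n=0$ degeneracy. The paper's construction dodges all of that, because an edge $(v,w)$ with $v<w$ already supplies the upper neighbor $w$ and the lower neighbor $u$ as two neighbors of $v$, with no degree hypothesis needed. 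Both give the same count $2^{2^n}\le 2^{2n+2^n}$, so your extra machinery isn't buying anything here --- the paper's base case is the streamlined version of yours.
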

We first prove the base case $r = 2$. Let a $(G,n,\mathcal F_{G,\phi})$-independent system be denoted $\{A_{u,v}\}_{1 \leq u < v \leq |V(G)|}$. We now color $G$ by giving $v \in V(G)$ the set $B_v := \{A_{u,v}\}_{u \in N(v), u< v}$ as the color of $v$. In particular, the color of $v$ is a set of sets, each of which is a subset of $[n]$, meaning that the total number of colors is $2^{2^n} \leq 2^{2n + 2^{n/2^{2-2}}}$. We claim that this gives a proper coloring of $G$. To see this, consider any edge $(v,w) \in E(G)$, such that $v < w$. If there is no $u < v$ with $(u,v) \in E(G)$, then $B_v$ is the empty set, whereas $B_w$ at least contains $A_{v,w}$, so certainly $B_v \neq B_w$. Also, note that for any $u < v$ with $(u,v) \in E(G)$, we have that $(v,\{u,w\}) \in \mathcal F_G$, meaning that $A_{u,v} - A_{v,w} \neq \emptyset$. Therefore, $A_{v,w}$ does not contain any element in $B_v$ as a subset, so in particular $B_v$ cannot contain $A_{v,w}$. However, $B_w$ does contain $A_{v,w}$, meaning that $B_v$ and $B_w$ are distinct, as desired.

Now assume that for all $r < r_0$ and $n < n_0$, Claim \ref{clm:inductstep} holds. Suppose that there is a $(G,n_0,\mathcal F_G)$-independent system, where $\mathcal F_G$ is such that for any set of $r_0$ neighbors $\{u_1, \ldots, u_{r_0}\}$ of any vertex $v$, there is some $(v,S) \in \mathcal F_G$ with $\{u_1, \ldots, u_{r_0}\} \subseteq S$. By Lemma \ref{lem:inductr}, we may partition $V(G)$ into sets $T_1, \ldots, T_{n'}, T_{n'+1}$, with $n' \leq 2^{n_0-1}$, and such that properties (1) -- (3) in the lemma are satisfied. By the inductive hypothesis and conditions (2) and (3) of Lemma \ref{lem:inductr}, we have that for $1 \leq i \leq n'$,
$$
\chi(G[T_i]) \leq 2^{n_0 + 2^{\lfloor \frac{n_0}{2} \rfloor/2^{r_0-3}}} \leq 2^{n_0 + 2^{n_0/2^{r_0-2}}}.
$$
By condition (1) in Lemma \ref{lem:inductr}, we have that $\chi(G[T_{n'+1}]) = 1$. Therefore, we may color the vertices of $G$ by the product coloring of the unique $i$ such that any $v \in T_i$ and the coloring of $G[T_i]$, which gives:
$$
\chi(G) \leq 1 + n' \cdot 2^{n_0 + 2^{n_0/2^{r_0-2}}} \leq 2^{2n_0 + 2^{n_0/2^{r_0-2}}},
$$
where we have also used that only one color is needed for $T_{n'+1}$. This gives the desired result.
\end{proof}

For any fixed $r$, the function $n \mapsto 2^{2n + 2^{n/2^{r-2}}}$ is a strictly increasing continuous function of $n$ for $n \in \mathbb{R}^+$, so for any $m \in \mathbb{R}^+$ with $m \geq 2$, there is a unique $n$ with $m = 2^{2n + 2^{n/2^{r-2}}}$. Therefore, we may define $P_r(m)$ to be the inverse of the function $n \mapsto 2^{2n + 2^{n/2^{r-2}}}$ for $n \in \BR^+$. The next two lemmas establish an asymptotic form for $P_r(m)$ which will be useful in applying Theorem \ref{thm:reclowbnd}.

We let $\MW : \mathbb{R}^+ \ra \mathbb{R}^+$ be the Lambert $\MW$-function, defined as the inverse of the function $f(x) = xe^x$. (In particular, we are letting $\MW$ denote the principal branch of the Lambert $\MW$-function, restricted to the positive reals.) Then we have the following:
\begin{lemma}
  \label{lem:invmn}
If $m = 2^{2n + 2^{n/2^{r-2}}}$, then
$$
n = \frac{2 \ln m - 2^r \MW(2^{1-r}m^{2^{1-r}} \ln 2)}{4 \ln 2}.
$$
\end{lemma}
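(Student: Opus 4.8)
The plan is to solve the equation $m = 2^{2n + 2^{n/2^{r-2}}}$ for $n$ by taking logarithms and reducing it to the defining equation of the Lambert $\MW$-function. First I would take $\log_2$ of both sides to obtain $\log_2 m = 2n + 2^{n/2^{r-2}}$, and then convert to natural logarithms, writing $\ln m = 2n\ln 2 + 2^{n/2^{r-2}}\ln 2$. The goal is to isolate a combination of the form $x e^x$ so that applying $\MW$ inverts it.

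The key manipulation is to introduce the substitution that makes the exponential term match the shape $x e^x$. Set $t = n/2^{r-2}$, so that $n = 2^{r-2} t$ and the equation becomes $\ln m = 2^{r-1} t \ln 2 + 2^{t}\ln 2$. Rewrite $2^t = e^{t \ln 2}$, so $\ln m = 2^{r-1}(\ln 2) t + (\ln 2) e^{t\ln 2}$. Now I would multiply through and rearrange so that $e^{t\ln 2}$ is paired with a linear term in $t\ln 2$ of the correct coefficient: dividing by $\ln 2$ gives $\frac{\ln m}{\ln 2} = 2^{r-1} t + e^{t\ln 2}$, i.e. $\log_2 m - 2^{r-1} t = e^{t\ln 2}$. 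Multiplying both sides by $2^{1-r}\ln 2 \cdot e^{-t\ln 2}$ (or, more directly, manipulating into the form $(\text{something})e^{\text{something}}$) should produce an expression $y e^y = 2^{1-r} m^{2^{1-r}} \ln 2$ where $y$ is an affine function of $t\ln 2$; applying $\MW$ then gives $y = \MW(2^{1-r} m^{2^{1-r}} \ln 2)$, and unwinding the substitutions $y \mapsto t \mapsto n$ yields the claimed closed form $n = \frac{2\ln m - 2^r \MW(2^{1-r} m^{2^{1-r}}\ln 2)}{4\ln 2}$.

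The main obstacle — really the only nontrivial point — is getting the bookkeeping of constants exactly right in the step that forces the equation into $\MW$-form, in particular tracking how the factor $m^{2^{1-r}}$ arises (it comes from $e^{-t\ln 2}$ together with $2^{r-1}t\ln 2 = n\ln 2/2^{?}$ after the substitution, since $e^{-t\ln 2}$ contributes a power of $m$ when $t\ln 2$ is expressed back in terms of $\log_2 m$). I would carry this out by setting $z = 2^{1-r}(\log_2 m - 2^{r-1} t)\ln 2$, checking that $z > 0$ so the principal branch of $\MW$ applies, and verifying that $z e^z$ simplifies to $2^{1-r}\ln 2 \cdot m^{2^{1-r}}$ using $e^z = e^{2^{1-r}(\log_2 m)\ln 2} \cdot e^{-t\ln 2}$ and $2^{1-r}(\log_2 m)\ln 2 = 2^{1-r}\ln m$, so $e^{2^{1-r}\ln m} = m^{2^{1-r}}$. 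Once $z = \MW(2^{1-r}m^{2^{1-r}}\ln 2)$ is established, solving the linear equation $z = 2^{1-r}\ln m - 2^{1-r}\cdot 2^{r-1} t\ln 2 = 2^{1-r}\ln m - t\ln 2$ for $t$ and then using $n = 2^{r-2} t$ is purely algebraic and gives the stated formula after clearing denominators. Finally I would note the monotonicity remark already in the text guarantees this $n$ is the unique real solution, so no extraneous-root issue arises.
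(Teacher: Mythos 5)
Your derivation is correct, and the constant-tracking works out: with $t = n/2^{r-2}$ the equation reduces to $\log_2 m - 2^{r-1}t = 2^t$, and setting $z = 2^{1-r}(\log_2 m - 2^{r-1}t)\ln 2$ gives $z = 2^{1-r}\ln 2\cdot e^{t\ln 2} > 0$ and $e^z = m^{2^{1-r}}e^{-t\ln 2}$, hence $ze^z = 2^{1-r}m^{2^{1-r}}\ln 2$; inverting via $z = \MW\bigl(2^{1-r}m^{2^{1-r}}\ln 2\bigr)$, then $t = (2^{1-r}\ln m - z)/\ln 2$ and $n = 2^{r-2}t$, yields exactly $n = \frac{2\ln m - 2^{r}\MW(2^{1-r}m^{2^{1-r}}\ln 2)}{4\ln 2}$. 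This is a genuinely different route from the paper's proof, which works backwards: it \emph{defines} $\hat n$ to be the claimed expression, directly computes $2^{\hat n/2^{r-2}}$ and $2\hat n$, uses the identity $\MW(x)e^{\MW(x)} = x$ to verify that $2^{\hat n/2^{r-2}} = \log_2 m - 2\hat n$ (so $m = 2^{2\hat n + 2^{\hat n/2^{r-2}}}$), and then invokes strict monotonicity of $n\mapsto 2^{2n + 2^{n/2^{r-2}}}$ to conclude $\hat n = n$. Your forward derivation is more illuminating in that it shows how the formula is discovered rather than merely checked, and it renders the closing monotonicity remark superfluous, since the chain of substitutions $n\leftrightarrow t\leftrightarrow z$ is bijective on $\mathbb{R}^+$ and the equation for $z$ is solved by $\MW$ uniquely on the positive reals; the paper's verification approach is terser and avoids having to guess the right substitution, which is presumably why it was chosen there. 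The only small thing I would tighten is the hedge ``should produce'' --- the substitution you then write down does produce the required $\MW$-form, so you can assert it outright.
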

\begin{proof}
Define $\hat n = \frac{2 \ln m - 2^r \MW(2^{1-r} m^{2^{1-r}} \ln 2)}{4 \ln 2}$. Our goal is to show that $\hat n = n$.
Note that
$$
2^{\hat n} = \exp \left(\frac{\ln m}{2} - 2^{r-2} \MW(2^{1-r} m^{2^{1-r}} \ln 2) \right) = m^{1/2} \cdot \exp( - 2^{r-2} \MW(2^{1-r} m^{2^{1-r}} \ln 2)),
$$
which implies
\begin{equation}
  \label{eq:lw1}
2^{\hat n/2^{r-2}} = m^{1/2^{r-1}} \cdot \exp(-\MW(2^{1-r}m^{2^{1-r}}\ln 2)),
\end{equation}
and
\begin{equation}
  \label{eq:lw2}
2\hat n = \log_2 m - \frac{2^{r-1}}{\ln 2} \cdot \MW(2^{1-r} m^{2^{1-r}} \ln 2).
\end{equation}
Next we claim that
\begin{equation}
  \label{eq:lw3}
 m^{1/2^{r-1}} \cdot \exp(-\MW(2^{1-r}m^{2^{1-r}}\ln 2)) =  \frac{2^{r-1}}{\ln 2} \cdot \MW(2^{1-r} m^{2^{1-r}} \ln 2),
\end{equation}
which, by definition of $\MW$, is equivalent to
$$
\frac{m^{2^{1-r}} \cdot \ln 2}{2^{r-1}} = 2^{1-r} \cdot m^{2^{1-r}} \cdot \ln 2,
$$
which is trivially true. By (\ref{eq:lw1}), (\ref{eq:lw2}), and (\ref{eq:lw3}), $2^{\hat n /2^{r-2}} = \log_2m - 2 \hat n$, so $m = 2^{2 \hat n + 2^{\hat n /2^{r-2}}}$. Since the function $n \mapsto 2^{2n + 2^{n/2^{r-2}}}$ is a strictly increasing function of $n$ for any $r$, it must be the case that $\hat n = n$, as desired.
\end{proof}
 In the below lemma, recall that $\log$ denotes the base-2 logarithm while $\ln$ denotes the natural logarithm. 
\begin{lemma}
\label{lem:invw}
For a fixed $r$, we have that $P_r(m) = 2^{r-2} \cdot \log \log m + o(1)$ as $m \ra \infty$.
\end{lemma}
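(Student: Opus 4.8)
The plan is to extract the asymptotics of $P_r(m)$ directly from the closed form given in Lemma~\ref{lem:invmn}, namely
$$
P_r(m) = \frac{2\ln m - 2^r\,\MW\!\left(2^{1-r} m^{2^{1-r}} \ln 2\right)}{4\ln 2},
$$
by analyzing the two terms in the numerator separately as $m \to \infty$. The first term contributes $\frac{2\ln m}{4\ln 2} = \frac{1}{2}\log m$. So everything hinges on showing that $\frac{2^r}{4\ln 2}\,\MW\!\left(2^{1-r} m^{2^{1-r}}\ln 2\right) = \frac{1}{2}\log m - 2^{r-2}\log\log m + o(1)$, which after dividing through is the statement that $\MW\!\left(2^{1-r} m^{2^{1-r}}\ln 2\right) = 2^{1-r}\ln m - \ln\log m + o(1)$ (converting bases carefully: $2^{1-r}\ln m$ corresponds to $\tfrac{1}{2^{r-1}}\cdot\tfrac{\ln m}{1}$, and one must be consistent about $\log$ versus $\ln$).

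The key step is the standard asymptotic expansion of the Lambert $W$-function: as $x \to \infty$, $\MW(x) = \ln x - \ln\ln x + o(1)$. I would either cite this (it is classical — e.g. from Corless et al.\ on the Lambert $W$ function) or derive it quickly: writing $y = \MW(x)$ so that $ye^y = x$, take logs to get $y + \ln y = \ln x$, hence $y = \ln x - \ln y$; since $y \to \infty$ and $y = \ln x - \ln y = \ln x (1 + o(1))$, we get $\ln y = \ln\ln x + o(1)$, so $y = \ln x - \ln\ln x + o(1)$. Applying this with $x = 2^{1-r} m^{2^{1-r}} \ln 2$: then $\ln x = 2^{1-r}\ln m + (1-r)\ln 2 + \ln\ln 2 = 2^{1-r}\ln m + O(1)$, and $\ln\ln x = \ln\!\left(2^{1-r}\ln m + O(1)\right) = \ln\ln m + \ln(2^{1-r}) + o(1) = \ln\ln m + O(1)$. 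Wait — the $O(1)$ terms here do not vanish, so I need to track them: $\ln x = 2^{1-r}\ln m + c_1$ with $c_1 = (1-r)\ln 2 + \ln\ln 2$ a constant, and $\ln\ln x = \ln\ln m + c_2 + o(1)$ with $c_2 = \ln(2^{1-r}) = (1-r)\ln 2$. Thus $\MW(x) = 2^{1-r}\ln m - \ln\ln m + (c_1 - c_2) + o(1) = 2^{1-r}\ln m - \ln\ln m + \ln\ln 2 + o(1)$.

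Plugging back, $P_r(m) = \frac{2\ln m}{4\ln 2} - \frac{2^r}{4\ln 2}\left(2^{1-r}\ln m - \ln\ln m + \ln\ln 2 + o(1)\right) = \frac{\ln m}{2\ln 2} - \frac{\ln m}{2\ln 2} + \frac{2^{r-2}\ln\ln m}{\ln 2} - \frac{2^{r-2}\ln\ln 2}{\ln 2} + o(1)$. The $\ln m$ terms cancel exactly, leaving $P_r(m) = 2^{r-2}\cdot\frac{\ln\ln m}{\ln 2} + O(1) = 2^{r-2}\log\log m + O(1)$ — not quite $o(1)$ error but an $O(1)$ error term. Here I would need to be slightly careful about what precision the paper actually needs; the cleaner route is to observe $\frac{\ln\ln m}{\ln 2} = \log(\ln m) = \log(\log m \cdot \ln 2) = \log\log m + \log\ln 2$, so the constant $-\frac{2^{r-2}\ln\ln 2}{\ln 2} + 2^{r-2}\log\ln 2 = -2^{r-2}\log\ln 2 + 2^{r-2}\log\ln 2 = 0$, and the constants miraculously cancel, giving exactly $P_r(m) = 2^{r-2}\log\log m + o(1)$. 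The main obstacle — and the thing I would double-check most carefully — is exactly this bookkeeping of $\ln$ versus $\log$ and the additive constants in the $W$-expansion, since the claimed error is $o(1)$ rather than $O(1)$ and it is only the precise cancellation of the constants $\ln\ln 2$ that makes this true; I would also need to justify rigorously (rather than heuristically) the $o(1)$ in $\MW(x) = \ln x - \ln\ln x + o(1)$, for which iterating $y = \ln x - \ln y$ once more and bounding $\ln y - \ln(\ln x - \ln\ln x) \to 0$ suffices.
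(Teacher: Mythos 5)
Your proof is correct and follows essentially the same route as the paper: both start from the closed form in Lemma~\ref{lem:invmn}, apply the Corless et al.\ asymptotic $\MW(x) = \ln x - \ln\ln x + o(1)$, and then track the additive constants through the $\ln$/$\log$ conversions to see that they cancel exactly, leaving $2^{r-2}\log\log m + o(1)$. The only difference is cosmetic: you additionally sketch a derivation of the Lambert~$W$ expansion, whereas the paper simply cites it.
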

\begin{proof}
  By Lemma \ref{lem:invmn}, we have $P_r(m) = \frac{2 \ln m - 2^r \MW(2^{1-r} m^{2^{1-r}} \ln 2)}{4 \ln 2}$. It was shown in \cite{corless_lambert_1996} that $\MW(x) = \ln x - \ln \ln x + o(1)$ as $x \ra \infty$. 
For fixed $r$, note that $2^{1-r} m^{2^{1-r}} \ln 2 \ra \infty$ as $m \ra \infty$. Therefore,
\begin{eqnarray}
  && 4 \ln 2 \cdot P_r(m) \nonumber\\
  &=& 2 \ln m - 2^r \left( \ln \left( 2^{1-r} m^{2^{1-r}} \ln 2 \right) - \ln \ln \left(2^{1-r}m^{2^{1-r}} \ln 2 \right) + o(1) \right)\nonumber\\
                     &=& 2 \ln m - 2^r \left( (1-r) \ln 2 + 2^{1-r} \ln m + \ln \ln 2\right. \nonumber\\
  && \left.-\ln\left((1-r) \ln 2 + 2^{1-r} \ln m + \ln \ln 2\right) + o(1) \right)\nonumber\\
                     &=& 2^r(r-1) \ln 2 - 2^r \ln \ln 2 + o(1) + 2^r \ln ((1-r) \ln 2 + 2^{1-r} \ln m + \ln \ln 2) \nonumber\\
  \label{eq:breakln}
  &=& 2^r (r-1) \ln 2 - 2^r \ln \ln 2 + 2^r (1-r) \ln 2 + 2^r \ln \ln m + o(1) \\
                     &=& 2^r \ln \ln m - 2^r \ln \ln 2 + o(1) \nonumber\\
  &=& 2^r \ln \log m + o(1)\nonumber.
\end{eqnarray}
To arrive at equality (\ref{eq:breakln}) we have used that $\ln(2^{1-r} \ln m + (1-r) \ln 2 + \ln \ln 2) = \ln(2^{1-r} \ln m) + o(1)$ as $m \ra \infty$. From the above chain of equalities we then get that $P_r(m) = 2^{r-2} \frac{\ln \log m}{\ln 2} + o(1) = 2^{r-2} \log \log m + o(1)$, as desired.
\end{proof}
Using the previous results we next derive a lower bound on the chromatic number of $U(m,R,\delta)$ as well as of the subgraphs $W_r(m,\sigma,\delta)$ considered in the previous section. To do this we define two more families of graphs, denoted by $Y(m,\delta)$ and $Z(m,R,\delta)$ (see also Table \ref{tab:cgs}). Vertices of the graph $Y(m,\delta)$ are chains that grow arithmetically in size; in particular for $m,\delta \in \BN$, we define $Y(m,\delta)$ to be the subgraph of $U(m, 2\delta+1, \delta)$ induced by the set of vertices:
$$
V(Y(m,\delta)) := \{ \langle \alpha, A_1, \ldots, A_\delta \rangle : \alpha \in A_1 \subset \cdots \subset A_\delta \subset [m], \forall 1 \leq i \leq \delta, |A_i| = 2i + 1\}.
$$
Next, if moreover $R \geq 2\delta + 1$ we define $Z(m,R,\delta)$ to be the subgraph of $U(m,R,\delta)$ induced by the set of vertices:
\begin{equation}
  V(Z(m,R,\delta)) = \left\{ \langle \alpha, A_1, \ldots, A_\delta \rangle :
  \begin{array}{c}
    \alpha \in A_1 \subset \cdots \subset A_\delta \subset [m], |A_\delta| = R, \\
    \forall 1 \leq i \leq \delta -1, |A_i| = 2i + 1
  \end{array}
  \right\}.\nonumber
\end{equation}
By restriction to $Z(m,R,\delta)$, (\ref{eq:definephi}) defines a graph homomorphism $\phi : Z(m,R,\delta) \ra\\ Y(m,\delta-1)$. By Lemmas \ref{lem:tset} and \ref{lem:restindsub}, $\phi$ is complete.

We will derive a lower bound on $\chi(Z(m,R,\delta))$, which will then imply lower bounds on $\chi(U(m,R,\delta))$ and $\chi(W_r(m,\sigma,\delta))$, for appropriate choices of $r,\sigma$. We will need a few basic facts to do so. The following theorem gives a lower bound on the chromatic number of $Y(m,\delta)$:
\begin{theorem}[\cite{linial_locality_1992}, Theorem 2.1]
  \label{thm:shiftgraph}
For all $m, \delta$ with $\log^{(2\delta)}m \geq 1$, $\chi(Y(m,\delta)) \geq \log^{(2\delta)}m$.
\end{theorem}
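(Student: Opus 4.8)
The plan is to bound $\chi(Y(m,\delta))$ from below by exhibiting a graph homomorphism into $Y(m,\delta)$ from an iterated shift graph whose chromatic number is at least $\log^{(2\delta)}m$; since a homomorphism $\psi:A\ra B$ gives $\chi(A)\le\chi(B)$, this suffices. One cannot simply quote Theorem~\ref{thm:reclowbnd} for the forgetful map $Y(m,\delta)\ra Y(m,\delta-1)$: because $|A_\delta|=2\delta+1$ exceeds $|A_{\delta-1}|$ by only $2$, that theorem's hypothesis holds there only with $r=1$ (two neighbours of a vertex of $Y(m,\delta-1)$ cannot in general be simultaneously covered by a single lift), and $r=1$ costs a constant factor inside the iterated logarithm. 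So a direct argument is needed.

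For $k\ge1$ and $m\in\BN$ let $\mathrm{Sh}_k(m)$ be the graph whose vertices are the strictly increasing tuples $(a_1<\cdots<a_k)$ from $[m]$, with $(a_1,\dots,a_k)$ adjacent to $(a_2,\dots,a_{k+1})$ whenever $a_1<\cdots<a_{k+1}$. I would first show, by induction on $k$, that $\chi(\mathrm{Sh}_k(m))\ge\log^{(k-1)}m$ whenever the right-hand side is at least $1$. For $k=1$, $\mathrm{Sh}_1(m)$ is the complete graph on $[m]$, so $\chi=m=\log^{(0)}m$. For the step, let $c$ be any proper colouring of $\mathrm{Sh}_{k+1}(m)$ and colour each vertex $w=(a_1,\dots,a_k)$ of $\mathrm{Sh}_k(m)$ by the set $f(w)=\{\,c(b,a_1,\dots,a_k): 1\le b<a_1\,\}$. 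If $w=(a_1,\dots,a_k)$ and $w'=(a_2,\dots,a_{k+1})$ are adjacent (so $a_1<\cdots<a_{k+1}$), then $c(a_1,\dots,a_{k+1})\in f(w')$ (take $b=a_1$), whereas for every $b<a_1$ the vertices $(b,a_1,\dots,a_k)$ and $(a_1,\dots,a_{k+1})$ are adjacent in $\mathrm{Sh}_{k+1}(m)$, so $c(a_1,\dots,a_{k+1})\ne c(b,a_1,\dots,a_k)$ and hence $c(a_1,\dots,a_{k+1})\notin f(w)$; thus $f$ is proper and $\chi(\mathrm{Sh}_k(m))\le 2^{\chi(\mathrm{Sh}_{k+1}(m))}$. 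If $\chi(\mathrm{Sh}_{k+1}(m))=t$ then $\log^{(k-1)}m\le\chi(\mathrm{Sh}_k(m))\le2^t$, so $t\ge\log^{(k)}m$, completing the induction. Taking $k=2\delta+1$ gives $\chi(\mathrm{Sh}_{2\delta+1}(m))\ge\log^{(2\delta)}m$.

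It remains to define $\psi:\mathrm{Sh}_{2\delta+1}(m)\ra Y(m,\delta)$. Send $(a_1<\cdots<a_{2\delta+1})$ to $\langle\alpha,A_1,\dots,A_\delta\rangle$ with $\alpha=a_{\delta+1}$ and $A_i=\{a_{\delta+1-i},\dots,a_{\delta+1+i}\}$ for $1\le i\le\delta$; this is a vertex of $Y(m,\delta)$ since the $a_j$ are distinct, so $\alpha\in A_1\subset\cdots\subset A_\delta$ with $|A_i|=2i+1$. For an edge $(a_1,\dots,a_{2\delta+1})\sim(a_2,\dots,a_{2\delta+2})$ (so $a_1<\cdots<a_{2\delta+2}$), the image is the pair with $\alpha=a_{\delta+1}$, $\beta=a_{\delta+2}$, $A_i=\{a_{\delta+1-i},\dots,a_{\delta+1+i}\}$ and $B_i=\{a_{\delta+2-i},\dots,a_{\delta+2+i}\}$, and one checks directly that $\alpha\ne\beta$, $\alpha\in B_1$, $\beta\in A_1$, and $A_i\subseteq B_{i+1}$, $B_i\subseteq A_{i+1}$ for $1\le i\le\delta-1$ — each containment just compares two consecutive windows of $a_1,\dots,a_{2\delta+2}$. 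Hence $\psi$ is a homomorphism and $\chi(Y(m,\delta))\ge\chi(\mathrm{Sh}_{2\delta+1}(m))\ge\log^{(2\delta)}m$.

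I expect the only genuine subtlety to be the design of $\psi$: taking the $A_i$ to be the windows \emph{centred} at the middle coordinate $a_{\delta+1}$ (rather than, say, initial segments) is precisely what makes the symmetric adjacency of $Y(m,\delta)$ — which demands $\alpha\in B_1$ \emph{and} $\beta\in A_1$ — work out, since $a_{\delta+1}$ and $a_{\delta+2}$ are consecutive and hence each lies in the radius-$1$ window around the other. Everything else is the classical Erd\H{o}s--Hajnal iterated-shift-graph bound, and the remaining work is routine: verifying the window containments without off-by-one errors, and checking that the $f$-colouring argument behaves at the boundary (e.g.\ $f(w)=\emptyset$ when $a_1=1$, which still differs from $f(w')$).
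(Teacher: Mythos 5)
The paper does not prove this statement itself; it is imported as a cited result (from Linial, with essentially equivalent forms attributed to Haramaty--Sudan and Erd\H{o}s--Hajnal), so there is no in-paper proof to compare against. Your proposal is a correct, self-contained reconstruction of the standard argument behind that citation: the iterated shift-graph chromatic lower bound $\chi(\mathrm{Sh}_k(m))\ge\log^{(k-1)}m$ via the set-colouring induction, followed by a graph homomorphism $\mathrm{Sh}_{2\delta+1}(m)\ra Y(m,\delta)$. I checked the details. In the induction, $c(a_1,\dots,a_{k+1})$ lies in $f(w')$ (take $b=a_1<a_2$) but not in $f(w)$ (since each $(b,a_1,\dots,a_k)$ with $b<a_1$ is adjacent to $(a_1,\dots,a_{k+1})$ in $\mathrm{Sh}_{k+1}$), so $f$ is proper and the $2^{(\cdot)}$ bound holds; the empty-set boundary case $a_1=1$ is harmless as you note. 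The homomorphism $\psi$ sending $(a_1<\cdots<a_{2\delta+1})$ to the centred windows $A_i=\{a_{\delta+1-i},\dots,a_{\delta+1+i}\}$ with $\alpha=a_{\delta+1}$ produces a valid vertex of $Y(m,\delta)$ (sizes $2i+1$, nesting, $\alpha\in A_1$), and along an edge one indeed gets $\alpha=a_{\delta+1}\ne a_{\delta+2}=\beta$, $\alpha\in B_1$, $\beta\in A_1$, and $A_i=\{a_{\delta+1-i},\dots,a_{\delta+1+i}\}\subseteq\{a_{\delta+1-i},\dots,a_{\delta+3+i}\}=B_{i+1}$ and symmetrically $B_i\subseteq A_{i+1}$; the index bookkeeping checks out. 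Your preliminary observation --- that Theorem~\ref{thm:reclowbnd} applied to the forgetful map $Y(m,\delta)\ra Y(m,\delta-1)$ only yields $r=1$ because $|A_\delta|-|A_{\delta-1}|=2$ while two distinct neighbours can each demand two fresh elements, and that $r=1$ loses a constant factor inside the iterated log --- is also accurate, and correctly motivates giving a direct argument. In short: the proof is right, and it is the classical route, just spelled out rather than cited.
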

Essentially equivalent statements of the above theorem can also be found in Theorem 2.7 of \cite{haramaty_deterministic_2016} and Theorem 7 of \cite{erdos_remarks_1964}.

The next lemma states a key property of the map $\phi : Z(m,R,\delta) \ra Y(m,\delta-1)$ that allows us to apply Theorem \ref{thm:reclowbnd}.
\begin{lemma}
  \label{lem:zinset}
  For any $m,R,\delta$ with $R \geq 2\delta+1$, the homomorphism $\phi : Z(m,R,\delta) \ra Y(m,\delta-1)$ satisfies the following property for $r \in \BN$ with $r \leq \frac{R-(2\delta-1)}{2}$. For any chains $\MA, \MB^{(1)}, \ldots, \MB^{(r)} \in V(Y(m,\delta-1))$, such that $(\MA, \MB^{(i)}) \in E(Y(m,\delta-1))$ for $1 \leq i \leq r$, there exists a pair $(\MA, S) \in \MF_{Y(m,\delta-1),\phi}$ such that for $1 \leq i \leq r$, $\MB^{(i)} \in S$.
\end{lemma}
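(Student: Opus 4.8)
The plan is to produce a single chain $\MC \in V(Z(m,R,\delta))$ with $\phi(\MC) = \MA$ such that $\MB^{(1)}, \dots, \MB^{(r)}$ all lie in $\phi(N(\MC))$; then $S := \phi(N(\MC))$ witnesses the claim, since $(\MA,S) = (\phi(\MC),\phi(N(\MC))) \in \MF_{Y(m,\delta-1),\phi}$. Write $\MA = \langle \alpha, A_1, \dots, A_{\delta-1} \rangle$ and $\MB^{(i)} = \langle \beta_i, B^{(i)}_1, \dots, B^{(i)}_{\delta-1} \rangle$. Because $\phi$ simply deletes the last set of a chain, any preimage of $\MA$ under $\phi$ has the form $\MC = \langle \alpha, A_1, \dots, A_{\delta-1}, A_\delta \rangle$ with $A_{\delta-1} \subseteq A_\delta \subseteq [m]$ and $|A_\delta| = R$; the whole point will be to choose $A_\delta$ so that it simultaneously contains all the sets $B^{(i)}_{\delta-1}$.

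The crux is the counting estimate $|U| \le R$, where $U := A_{\delta-1} \cup \bigcup_{i=1}^{r} B^{(i)}_{\delta-1}$. First I would observe that the edge hypothesis $(\MA,\MB^{(i)}) \in E(Y(m,\delta-1))$, which is an edge of $U(m,2\delta-1,\delta-1)$, forces $A_{\delta-2} \subseteq B^{(i)}_{\delta-1}$: this is the level-$(\delta-2)$ inclusion of the edge relation, or the containment $\alpha \in B^{(i)}_1$ when $\delta = 2$, and is vacuous with the convention $A_{-1} = \emptyset$ when $\delta = 1$. Since also $A_{\delta-2} \subseteq A_{\delta-1}$ and $|A_j| = 2j+1$ throughout $Y$, each $B^{(i)}_{\delta-1}$ meets $A_{\delta-1}$ in at least $|A_{\delta-2}| = 2\delta-3$ points, so $|B^{(i)}_{\delta-1} \setminus A_{\delta-1}| \le (2\delta-1) - (2\delta-3) = 2$. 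Summing, $|U| \le |A_{\delta-1}| + 2r = (2\delta-1) + 2r$, which is $\le R$ precisely by the hypothesis $r \le \frac{R-(2\delta-1)}{2}$. Now choose any $R$-element set $A_\delta$ with $U \subseteq A_\delta \subseteq [m]$ (possible as $|U| \le R \le m$; if $R > m$ then $Z(m,R,\delta)$ is empty and there is nothing to prove), and set $\MC = \langle \alpha, A_1, \dots, A_{\delta-1}, A_\delta \rangle$, which is a legitimate vertex of $Z(m,R,\delta)$ with $\phi(\MC) = \MA$.

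Finally I would verify that $\MB^{(i)} \in \phi(N(\MC))$ for each $i$ by exhibiting the explicit preimage-neighbour $\MD^{(i)} := \langle \beta_i, B^{(i)}_1, \dots, B^{(i)}_{\delta-1}, A_\delta \rangle$. Since $B^{(i)}_{\delta-1} \subseteq U \subseteq A_\delta$ and $|A_\delta| = R$, we have $\MD^{(i)} \in V(Z(m,R,\delta))$, and obviously $\phi(\MD^{(i)}) = \MB^{(i)}$. It remains to check $(\MC, \MD^{(i)}) \in E(Z(m,R,\delta))$; as $Z(m,R,\delta)$ is an induced subgraph of $U(m,R,\delta)$ it suffices to verify the edge conditions of $U(m,R,\delta)$. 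The conditions $\alpha \ne \beta_i$, $\alpha \in B^{(i)}_1$, $\beta_i \in A_1$, together with $A_j \subseteq B^{(i)}_{j+1}$ and $B^{(i)}_j \subseteq A_{j+1}$ for $1 \le j \le \delta-2$, are exactly what $(\MA,\MB^{(i)}) \in E(Y(m,\delta-1))$ asserts; and at level $j = \delta-1$ the two inclusions needed, $A_{\delta-1} \subseteq A_\delta$ and $B^{(i)}_{\delta-1} \subseteq A_\delta$, hold because both sets are contained in $U \subseteq A_\delta$. Hence $\MD^{(i)} \in N(\MC)$ and $\MB^{(i)} = \phi(\MD^{(i)}) \in \phi(N(\MC))$, so taking $S = \phi(N(\MC))$ finishes the proof.

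I expect the only genuinely nontrivial step to be the bound $|U| \le R$; everything else is unwinding the definitions of the chain graphs and of $\MF_{G,\phi}$. That step is where the arithmetic growth condition baked into $Y$ and $Z$ (consecutive set sizes differ by exactly $2$) is essential: a neighbour of $\MA$ in $Y$ can enlarge the top set $A_{\delta-1}$ by at most two elements, so accommodating $r$ of them costs at most $2r$ extra elements, which is exactly the budget the hypothesis $r \le \frac{R-(2\delta-1)}{2}$ grants us inside an $R$-element set.
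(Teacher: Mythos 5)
Your proof is correct and follows essentially the same route as the paper's: build the one preimage chain $\MC$ (the paper calls it $\tilde{\MA}$) of $\MA$ whose top set $A_\delta$ absorbs all the $B^{(i)}_{\delta-1}$, using the counting argument $|B^{(i)}_{\delta-1}\setminus A_{\delta-1}|\le 2$ to stay within the budget $R$, and then read off $S=\phi(N(\MC))$. The only cosmetic differences are that you derive the key bound from $A_{\delta-2}\subseteq B^{(i)}_{\delta-1}$ whereas the paper uses $B^{(i)}_{\delta-2}\subseteq A_{\delta-1}$ (both valid, same conclusion), and that you spell out the explicit neighbor $\MD^{(i)}$ of $\MC$ to verify $\MB^{(i)}\in\phi(N(\MC))$, which the paper leaves implicit.
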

\begin{proof}
  Let us write $\MA = \langle \alpha, A_1, \ldots, A_{\delta-1}\rangle$ and $\MB^{(i)} = \langle \beta^{(i)}, B_1^{(i)}, \ldots, B_{\delta-1}^{(i)} \rangle$ for $1 \leq i \leq r$. Since $B_{\delta-2}^{(i)} \subset A_{\delta-1}$ for each $i$, we have that $|B_{\delta-1}^{(i)} \backslash A_{\delta-1} | \leq 2$. Define
  $$
A_{\delta} = A_{\delta-1} \cup \bigcup_{1 \leq i \leq r} B_{\delta-1}^{(i)} \backslash A_{\delta-1},
$$
so that $|A_\delta| \leq 2\delta-1 + 2r \leq R$. If necessary, add a few arbitrary elements of $[m]$ to $A_\delta$ so that its size is exactly $R$. Now define $\tilde{\MA} := \langle \alpha, A_1, \ldots, A_{\delta-1}, A_\delta \rangle \in V(Z(m,R,\delta))$. Now indeed each $\MB^{(i)} \in \phi(N(\tilde{\MA}))$, so we may take $S = \phi(N(\tilde{\MA}))$, completing the proof.
\end{proof}

We may now derive a lower bound on $\chi(Z(m,R,\delta))$:
\begin{theorem}
\label{thm:stronglowbnd}
We have $\chi(Z(m,R,\delta)) > 2^{\frac{R-2\delta-4}{2}} \log^{(2\delta)}m + o(1)$ as $m \ra \infty$.
\end{theorem}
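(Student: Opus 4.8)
The plan is to combine the recursive lower bound of Theorem~\ref{thm:reclowbnd} with the shift-graph lower bound of Theorem~\ref{thm:shiftgraph}, using Lemma~\ref{lem:zinset} to verify the hypothesis of the former and Lemma~\ref{lem:invw} to unwind the resulting tower-type inequality. Throughout, recall that the restriction of~\eqref{eq:definephi} is a complete graph homomorphism $\phi : Z(m,R,\delta) \ra Y(m,\delta-1)$ (by Lemmas~\ref{lem:tset} and~\ref{lem:restindsub}), and that $\chi(Z(m,R,\delta))$ is a positive integer for every $m$.

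First I would set $r := \lfloor \tfrac{R - 2\delta + 1}{2} \rfloor$, which is the largest integer satisfying $r \le \tfrac{R - (2\delta - 1)}{2}$, and work in the main regime $R \ge 2\delta + 3$ so that $r \ge 2$ (the cases $R \in \{2\delta+1, 2\delta+2\}$ yield a coefficient $2^{(R-2\delta-4)/2} < 1$ in the target bound and can be disposed of separately --- for instance $Z(m,2\delta+1,\delta)$ is literally $Y(m,\delta)$, whose chromatic number is already at least $\log^{(2\delta)}m$ by Theorem~\ref{thm:shiftgraph}). By Lemma~\ref{lem:zinset}, for this $r$ and any $\MA \in V(Y(m,\delta-1))$ together with any $r$ neighbors $\MB^{(1)}, \dots, \MB^{(r)}$ of $\MA$, there is a pair $(\MA, S) \in \MF_{Y(m,\delta-1),\phi}$ with $\MB^{(i)} \in S$ for all $i$. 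This is exactly the hypothesis of Theorem~\ref{thm:reclowbnd} with $H = Z(m,R,\delta)$ and $G = Y(m,\delta-1)$.

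Next, putting $n := \chi(Z(m,R,\delta))$, Theorem~\ref{thm:reclowbnd} yields $\chi(Y(m,\delta-1)) \le 2^{2n + 2^{n/2^{r-2}}}$, while Theorem~\ref{thm:shiftgraph} (with $\delta - 1$ in place of $\delta$) gives $\chi(Y(m,\delta-1)) \ge \log^{(2\delta-2)} m$ once $m$ is large enough that $\log^{(2\delta-2)} m \ge 1$. Hence $\log^{(2\delta-2)} m \le 2^{2n + 2^{n/2^{r-2}}}$; since $x \mapsto 2^{2x + 2^{x/2^{r-2}}}$ is strictly increasing with inverse $P_r$, this rearranges to $n \ge P_r\bigl(\log^{(2\delta-2)} m\bigr)$. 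Now $\log^{(2\delta-2)} m \ra \infty$ as $m \ra \infty$, so Lemma~\ref{lem:invw} applies and gives $P_r\bigl(\log^{(2\delta-2)} m\bigr) = 2^{r-2}\log\log\bigl(\log^{(2\delta-2)} m\bigr) + o(1) = 2^{r-2}\log^{(2\delta)} m + o(1)$. Finally $r = \lfloor \tfrac{R-2\delta+1}{2} \rfloor \ge \tfrac{R - 2\delta}{2}$, so $2^{r-2} \ge 2^{(R - 2\delta - 4)/2}$, and therefore $\chi(Z(m,R,\delta)) \ge 2^{(R-2\delta-4)/2} \log^{(2\delta)} m + o(1)$, which is the assertion (the gap between $\ge$ and $>$ being absorbed into the $o(1)$).

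I do not expect a genuine conceptual obstacle, as essentially all the work has been front-loaded into Lemmas~\ref{lem:zinset} and~\ref{lem:invw} and Theorems~\ref{thm:reclowbnd} and~\ref{thm:shiftgraph}; the points needing care are bookkeeping rather than ideas. Chief among them: (i) tracking the floor and parity in the choice of $r$ precisely enough to land on the stated coefficient $2^{(R-2\delta-4)/2}$ rather than a weaker one; (ii) confirming that completeness of $\phi$ and the $r$-neighbor covering property survive passage to the induced subgraphs $Z(m,R,\delta)$ and $Y(m,\delta-1)$ (handled by Lemmas~\ref{lem:restindsub} and~\ref{lem:zinset} respectively); and (iii) ensuring the asymptotic input $\log^{(2\delta-2)} m \ra \infty$ is in force, which is what legitimizes invoking both Theorem~\ref{thm:shiftgraph} and Lemma~\ref{lem:invw}.
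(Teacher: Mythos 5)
Your argument is correct and follows the paper's own proof step for step: the same choice $r = \lfloor (R-2\delta+1)/2 \rfloor$, the same appeal to Theorem~\ref{thm:shiftgraph} for $Y(m,\delta-1)$, Lemma~\ref{lem:zinset} to supply the $r$-neighbor covering hypothesis, Theorem~\ref{thm:reclowbnd} to get $\log^{(2\delta-2)}m \leq 2^{2n+2^{n/2^{r-2}}}$, and Lemma~\ref{lem:invw} to invert it. The one thing you add --- separating off $R \in \{2\delta+1, 2\delta+2\}$, where $r=1$ falls below the base case $r=2$ of Theorem~\ref{thm:reclowbnd}'s induction --- actually patches a detail the paper's proof passes over silently, so your treatment is if anything slightly more careful.
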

\begin{proof}
  By Theorem \ref{thm:shiftgraph}, we have that $\chi(Y(m,\delta)) \geq \log^{(2\delta-2)}m$. Let us write $n = \chi(Z(m,R,\delta))$. By Theorem \ref{thm:reclowbnd} with $r = \left\lfloor \frac{R - (2\delta -1)}{2} \right\rfloor$ and Lemma \ref{lem:zinset}, we have that $\log^{(2\delta-2)}m \leq \chi(Y(m,\delta)) \leq 2^{2n + 2^{n/2^{r-2}}}$. By Lemma \ref{lem:invw}, we get that
 
$$
\chi(Z(m,R,\delta)) \geq 2^{\left \lfloor \frac{R-2\delta+1}{2} \right\rfloor- 2} \cdot \log^{(2\delta)}m + o(1),
$$
as $m \ra \infty$, as desired.
\end{proof}
Theorem \ref{thm:stronglowbnd} implies the following: given $m,R,\delta$, there exists a graph $G$ (namely, $Z(m,R,\delta)$) with a proper coloring with $m$ colors such that there are at most $R$ colors in the distance-$\delta$ neighborhood of each vertex of $G$, such that $\chi(G) >  2^{\frac{R - 2\delta - 4}{2}} \cdot \log^{(2\delta)}m + o(1)$. This generalizes Theorem 2.3 in \cite{erdos_coloring_1986} and Theorem 6 in \cite{szegedy_locality_1993}.

Since $Z(m,R,\delta)$ is a subgraph of $U(m,R,\delta)$, the following corollary of Theorem \ref{thm:stronglowbnd} is immediate:
\begin{corollary}
  \label{cor:umrdlowbnd}
We have $\chi(U(m,R,\delta)) > 2^{\frac{R-2\delta-4}{2}} \log^{(2\delta)}m + o(1)$ as $m \ra \infty$.
\end{corollary}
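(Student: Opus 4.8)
The plan is to derive this immediately from Theorem \ref{thm:stronglowbnd} via a subgraph-monotonicity argument for the chromatic number. First I would recall the definition of $Z(m,R,\delta)$: its vertex set $V(Z(m,R,\delta))$ consists of chains $\langle \alpha, A_1, \ldots, A_\delta \rangle$ with $\alpha \in A_1 \subset \cdots \subset A_\delta \subset [m]$, $|A_\delta| = R$, and $|A_i| = 2i+1$ for $1 \leq i \leq \delta - 1$, and its edge set is the restriction of $E(U(m,R,\delta))$ to this vertex set. Since $|A_\delta| = R$, every such chain has size exactly $R$, so $V(Z(m,R,\delta)) \subseteq V(U(m,R,\delta))$; and by construction the edges of $Z(m,R,\delta)$ are precisely those edges of $U(m,R,\delta)$ with both endpoints in $V(Z(m,R,\delta))$. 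Hence $Z(m,R,\delta) = U(m,R,\delta)[V(Z(m,R,\delta))]$ is an induced subgraph of $U(m,R,\delta)$.

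Next I would invoke the elementary fact that the chromatic number is monotone under taking subgraphs: if $H'$ is a subgraph of $H$, then any proper coloring of $H$ restricts to a proper coloring of $H'$ using no more colors, so $\chi(H') \leq \chi(H)$. Applying this with $H' = Z(m,R,\delta)$ and $H = U(m,R,\delta)$ gives $\chi(Z(m,R,\delta)) \leq \chi(U(m,R,\delta))$. Combining this inequality with the lower bound $\chi(Z(m,R,\delta)) > 2^{\frac{R-2\delta-4}{2}} \log^{(2\delta)}m + o(1)$ from Theorem \ref{thm:stronglowbnd} yields
$$
\chi(U(m,R,\delta)) \geq \chi(Z(m,R,\delta)) > 2^{\frac{R-2\delta-4}{2}} \log^{(2\delta)}m + o(1)
$$
as $m \ra \infty$, which is exactly the claimed bound.

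There is essentially no obstacle here; the only point requiring a moment's care is confirming that the definition of $Z(m,R,\delta)$ really makes it an \emph{induced} subgraph rather than merely a subgraph — but this is immediate from the way its edge set is defined as the restriction of $E(U(m,R,\delta))$, and in any case subgraph-monotonicity of $\chi$ does not even require the "induced" hypothesis. So the entire argument is a one-line deduction from Theorem \ref{thm:stronglowbnd}, and the $o(1)$ term carries over verbatim since it already appears in that theorem's statement.
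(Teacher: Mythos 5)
Your proposal is correct and is exactly the paper's argument: the paper derives the corollary in one line from Theorem \ref{thm:stronglowbnd} by noting that $Z(m,R,\delta)$ is a subgraph of $U(m,R,\delta)$ and applying monotonicity of the chromatic number under subgraphs.
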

We next embed $Z(m/2,R,\delta)$ in $W_r(m,\sigma,\delta)$ for appropriate choices of $R,\sigma,r$, allowing us to derive a lower bound on $\chi(W_r(m,\sigma,\delta))$ in the below corollary.
\begin{corollary}
  \label{cor:wrlowbnd}
  For $m, r, \sigma, \delta \in \BN$ with $r,\sigma \geq 2$, we have $$\chi(W_r(m,\sigma,\delta)) > 2^{\frac{r^{\sigma+\delta-2}(r-1)-5}{2}} \log^{(2\delta)} m  + o(1)$$ as $m \ra \infty$.
\end{corollary}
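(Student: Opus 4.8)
The plan is to exhibit a graph homomorphism $\psi : Z(\lfloor m/2 \rfloor, R, \delta) \ra W_r(m,\sigma,\delta)$ for a well-chosen $R$, since a homomorphism forces $\chi(W_r(m,\sigma,\delta)) \geq \chi(Z(\lfloor m/2\rfloor, R, \delta))$, after which Theorem~\ref{thm:stronglowbnd} finishes the job. Split $[m]$ into a ``data'' block $D = [\lfloor m/2\rfloor]$ (which is exactly why $m/2$ appears) and a ``padding'' block $P = \{\lfloor m/2\rfloor+1, \ldots, m\}$, and fix inside $P$ a nested family $\emptyset \subseteq P_1 \subseteq \cdots \subseteq P_\delta \subseteq P$ with $|P_i| = r^{\sigma+i-1} - (2i+1)$ for $1 \le i \le \delta-1$ and $|P_\delta| = r^{\sigma+\delta-1} - R$. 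Then define
$$
\psi(\langle \alpha, A_1, \ldots, A_\delta \rangle) = \langle \alpha,\, A_1 \cup P_1,\, \ldots,\, A_\delta \cup P_\delta \rangle .
$$
Because each $A_i \subseteq D$ is disjoint from $P_i \subseteq P$, the sets $A_i \cup P_i$ have exactly the cardinalities $r^{\sigma+i-1}$ required in $W_r(m,\sigma,\delta)$ and remain strictly nested (cardinalities strictly increase), so $\psi$ maps $V(Z(\lfloor m/2\rfloor,R,\delta))$ into $V(W_r(m,\sigma,\delta))$. It is a homomorphism: if $\MA = \langle\alpha, A_1,\ldots,A_\delta\rangle$, $\MB = \langle\beta,B_1,\ldots,B_\delta\rangle$ and $(\MA,\MB) \in E(Z(\lfloor m/2\rfloor,R,\delta))$, then $\alpha \ne \beta$, $\alpha \in B_1 \subseteq B_1 \cup P_1$, $\beta \in A_1 \subseteq A_1 \cup P_1$, and for $1 \le i \le \delta-1$ the inclusions $A_i \subseteq B_{i+1}$, $P_i \subseteq P_{i+1}$ yield $A_i \cup P_i \subseteq B_{i+1} \cup P_{i+1}$ (and symmetrically), so $(\psi(\MA),\psi(\MB)) \in E(W_r(m,\sigma,\delta))$.

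For this to work, the cardinalities $|P_i|$ must be nonnegative and nondecreasing in $i$, so that such a nested family exists inside $P$ (this is possible for $m$ large, since $|P_\delta|$ is a fixed constant while $|P| \sim m/2$). Nonnegativity of $|P_i|$ for $i \le \delta - 1$ follows from $r^{\sigma+i-1} \ge 2^{\sigma+i-1} \ge 2i+1$, using $r,\sigma \ge 2$. For $1 \le i \le \delta-2$ the increment $|P_{i+1}| - |P_i| = r^{\sigma+i-1}(r-1) - 2$ is positive, again by $r,\sigma \ge 2$. The condition that limits how large $R$ may be is $|P_\delta| \geq |P_{\delta-1}|$, which rearranges to $R \leq r^{\sigma+\delta-2}(r-1) + 2\delta - 1$. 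So I take $R := r^{\sigma+\delta-2}(r-1) + 2\delta - 1$; one checks $r^{\sigma+\delta-2}(r-1) \ge 2$, hence $R \ge 2\delta+1$ and $Z(\lfloor m/2\rfloor, R, \delta)$ is genuinely defined, and all the size requirements above hold.

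Finally I substitute into Theorem~\ref{thm:stronglowbnd} applied to a ground set of size $\lfloor m/2\rfloor$, obtaining $\chi(Z(\lfloor m/2\rfloor,R,\delta)) > 2^{(R-2\delta-4)/2}\log^{(2\delta)}(\lfloor m/2\rfloor) + o(1)$ as $m \to \infty$. With the chosen $R$ the exponent is $\frac{R-2\delta-4}{2} = \frac{r^{\sigma+\delta-2}(r-1)-5}{2}$, and since $2\delta \ge 2$, iterating $\log(x/2) = \log x - 1$ gives $\log^{(2\delta)}(\lfloor m/2\rfloor) = \log^{(2\delta)} m + o(1)$; as the leading factor $2^{(r^{\sigma+\delta-2}(r-1)-5)/2}$ is a constant for fixed $r,\sigma,\delta$, it absorbs into the error term. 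Combining with $\chi(W_r(m,\sigma,\delta)) \ge \chi(Z(\lfloor m/2\rfloor, R, \delta))$ gives $\chi(W_r(m,\sigma,\delta)) > 2^{(r^{\sigma+\delta-2}(r-1)-5)/2}\log^{(2\delta)} m + o(1)$, as claimed.

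I expect the only real subtlety to be the padding bookkeeping in the second step: recognizing that the inclusion $P_{\delta-1} \subseteq P_\delta$ is precisely what caps $R$ at $r^{\sigma+\delta-2}(r-1)+2\delta-1$ rather than at the naive $r^{\sigma+\delta-1}$, and that this cap is exactly what produces the exponent in the statement. Everything else — the homomorphism property, the existence of the nested padding family, and the $\log^{(2\delta)}(m/2)$ asymptotics — is routine.
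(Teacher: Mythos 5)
Your proof is correct and takes essentially the same approach as the paper: embed $Z(\lfloor m/2\rfloor, R, \delta)$ into $W_r(m,\sigma,\delta)$ by padding with a nested family inside $\{\lfloor m/2\rfloor + 1,\ldots,m\}$, take $R = r^{\sigma+\delta-2}(r-1)+2\delta-1$ as the largest admissible size, and apply Theorem~\ref{thm:stronglowbnd} together with $\log^{(2\delta)}(m/2) = \log^{(2\delta)}m + o(1)$. Your observation that $|P_\delta| \ge |P_{\delta-1}|$ forces $P_\delta = P_{\delta-1}$ for this choice of $R$ matches the paper's construction, which simply reuses $C_{\delta-1}$ as the padding at level $\delta$.
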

\begin{proof}
  Fix $r, \sigma, \delta$. Let $\tilde r = r^{\sigma + \delta-1} - r^{\sigma + \delta-2}$.
  For sufficiently large $m$, we define a graph homomorphism $\psi : Z(m/2, 2\delta-1 + \tilde r, \delta) \ra W_r(m, \sigma, \delta)$, as follows. First choose sets $C_1 \subseteq C_2 \subseteq \cdots \subseteq C_{\delta-1} \subseteq \{ m/2+1, \ldots, m\}$ such that $|C_i| = r^{\sigma + i-1} - (2i+1)$ for $1 \leq i \leq \delta-1$. This is certainly possible for large enough $m$.

  Now consider a chain $\MA = \langle \alpha, A_1, \ldots, A_\delta \rangle \in V(Z(m/2, 2\delta-1 + \tilde r, \delta))$. We map it to the chain $\psi(\MA) := \langle \alpha ,A_1 \cup C_1, A_2 \cup C_2, \ldots, A_{\delta-1} \cup C_{\delta-1}, A_{\delta} \cup C_{\delta-1} \rangle$. From this construction, as well as the fact that $|A_\delta \cup C_{\delta-1} | = |A_{\delta-1}| + | C_{\delta-1} | + |A_\delta \backslash A_{\delta-1}| = r^{\sigma + \delta-2} + \tilde r = r^{\sigma + \delta-1}$, we have that $\psi(\MA) \in V(W_r(m,\sigma,\delta))$. It is also evident from the construction that $\psi$ is indeed a graph homomorphism, meaning that $\chi(Z(m/2, 2\delta-1 +\tilde r, \delta)) \leq \chi(W_r(m,\sigma,\delta))$. By Theorem \ref{thm:stronglowbnd}, it follows that
  $$
\chi(W_r(m,\sigma,\delta)) \geq 2^{\frac{\tilde r - 5}{2}} \log^{(2\delta)}(m/2) + o(1) \geq 2^{\frac{r^{\sigma+\delta-2}(r-1)-5}{2}} \log^{(2\delta)} m + o(1),
$$
as desired.
\end{proof}

\section{Conclusion}
Now we compare the necessary conditions needed on the homomorphism $\phi : H \ra G$ to obtain lower (Theorem \ref{thm:reclowbnd}) and upper (Lemma \ref{lem:makegraphindep}) bounds on $\chi(H)$ if we already know $\chi(G)$. We assume that $\phi$ is complete. The requirement in Theorem \ref{thm:reclowbnd} is that for any $v \in G$ and neighbors $u_1, \ldots, u_r \in V(G)$ of $v$, there is some $(v,S) \in \mathcal F_{G,\phi}$ such that $u_1, \ldots, u_r \in S$. The requirement in Lemma \ref{lem:makegraphindep} is that for any $(v,S) \in \mathcal F_G$, we have that $|S| \leq r$. Note that in the specific case where $G = K_m$ and $H = U(m,r+1,1)$, these conditions are equivalent. However, for other values of $G$ (in particular, say $G = Z(m, R,\delta)$), we will have that for any $(v,S) \in \mathcal F_{G,\phi}$, the size of $S$ is much larger than the maximum number of neighbors $u_1, \ldots, u_r$ of $v$ for which there is some $(v,S) \in \mathcal F_{G,\phi}$ such that $u_1, \ldots, u_r \in S$. The difference between these two conditions creates the gap in the bound
\begin{equation}
  \label{eq:gapbnds}
2^{\frac{r^{\sigma+\delta-2}(r-1)-5}{2}} \log^{(2\delta)}m + o_m(1) \leq \chi(W_r(m,\sigma,\delta)) \leq 2^{2^{2+r^{\sigma+\delta-2}(r+1)}}  \cdot \log^{(2\delta)}m
\end{equation}
on $\chi(W_r(m,\sigma,\delta))$, from  Theorem \ref{thm:uppbndexp} and Corollary \ref{cor:wrlowbnd}, as well as the gap in the bound
\begin{equation}
  \label{eq:gapumr}
  2^{\frac{R - 2\delta - 4}{2}} \log^{(2\delta)}m + o_m(1) \leq \chi(U(m,R,\delta)) \leq 2^{2^{2 + 2R}} \log^{(2\delta)}m + o_m(1),
\end{equation}
from Theorem \ref{thm:uppbndgen} and Corollary \ref{cor:umrdlowbnd}. We remark that prior to our work the best known lower bound on $\chi(W_r(m,\sigma,\delta))$ and $\chi(U(m,R,\delta))$ was $\log^{(2\delta)}m$, though it is still of great interest to close the gap in (\ref{eq:gapbnds}) and (\ref{eq:gapumr}). 

The lower bound of $2^{\frac{R-2\delta-4}{2}} \log^{(2\delta)}m + o(1)$ on $\chi(U(m,R,\delta))$ from Theorem \ref{thm:stronglowbnd} casts doubt on the possibility that there is a constant-length encryption scheme with no error as described in Section 1. Recall that improving the upper bound on $\chi(U(m,R,\delta))$ from \cite{haramaty_deterministic_2016} by replacing the term $2^{O(R)}$ by a constant (independent of $R$) is sufficient for finding such a compression scheme. We have shown that it is not possible to do this for small values of $\delta$, whereas previously such a result was only known for the case $\delta = 1$ \cite{erdos_coloring_1986,szegedy_locality_1993}. 

\section{Acknowledgements}
I am grateful to Madhu Sudan for calling my attention to this problem and for helpful discussions, to Aaron Potechin for helpful suggestions and for pointing out a few minor errors in the proofs, and to an anonymous referee for helpful suggestions.

\appendix
\section{Proof of Theorem \ref{thm:uppbndexp}}
\label{apx:pfs}
Lemma \ref{lem:boundtset} uses the exponential growth of the chains $\MA \in V(W_r(m,\sigma,\delta))$ to obtain a better upper bound on $|\phi(N(\MA)) |$ than the one in Lemma \ref{lem:boundtsetgen}.
\begin{lemma}
\label{lem:boundtset}
For $\mathcal A \in V(W_r(m,\sigma,\delta))$, we have that $|\phi(N(\mathcal A))| \leq 2^{r^{\sigma+\delta-2}(r-1)}$.
\end{lemma}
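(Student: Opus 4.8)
The plan is to follow the proof of Lemma~\ref{lem:boundtsetgen} closely, but to sharpen its ``number of choices for $B_i$'' estimates using the fact that in $W_r(m,\sigma,\delta)$ every set in a chain has a prescribed size. First I would fix $\mathcal A = \langle\alpha,A_1,\dots,A_\delta\rangle \in V(W_r(m,\sigma,\delta))$ and note, exactly as in Lemma~\ref{lem:boundtsetgen}, that an element of $\phi(N(\mathcal A))$ is a truncated chain $\langle\beta,B_1,\dots,B_{\delta-1}\rangle$ arising from some neighbor $\langle\beta,B_1,\dots,B_\delta\rangle$ of $\mathcal A$, and that the constraints visible on $\langle\beta,B_1,\dots,B_{\delta-1}\rangle$ are $\beta\in A_1$, $\alpha\in B_1\subseteq A_2$, $A_{i-1}\subseteq B_i\subseteq A_{i+1}$ for $2\le i\le\delta-1$, together with the nesting $\{\alpha\}\subseteq B_1\subseteq\cdots\subseteq B_{\delta-1}$ and the size equalities $|B_i|=r^{\sigma+i-1}=|A_i|$. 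Hence $|\phi(N(\mathcal A))|$ is at most the number of tuples $\langle\beta,B_1,\dots,B_{\delta-1}\rangle$ meeting all of these conditions.

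Next I would count such tuples by choosing their coordinates in increasing order of index. There are at most $|A_1|=r^{\sigma}$ choices for $\beta$. Having chosen $B_1,\dots,B_{i-1}$, set $D_0=\{\alpha\}$ and $D_{i-1}=B_{i-1}\cup A_{i-1}$; then $D_{i-1}\subseteq A_i$ and $|D_{i-1}|\ge|A_{i-1}|$, and $B_i$ must satisfy $D_{i-1}\subseteq B_i\subseteq A_{i+1}$ (reading $A_\delta$ for $A_{\delta+1}$) and $|B_i|=|A_i|$, so the number of choices for $B_i$ is at most
$$
\binom{|A_{i+1}|-|D_{i-1}|}{\,|A_i|-|D_{i-1}|\,}\ \le\ \binom{|A_{i+1}|-|A_{i-1}|}{\,|A_i|-|A_{i-1}|\,}\ =\ \binom{r^{\sigma+i-2}(r^2-1)}{\,r^{\sigma+i-2}(r-1)\,}.
$$
The reason for keeping this binomial coefficient, rather than bounding it by $2^{|A_{i+1}|-|A_{i-1}|}$ as one does for $U(m,R,\delta)$, is that the equality $|B_i|=|A_i|$ forces $B_i$ to differ from $A_i$ only inside $A_i\setminus A_{i-1}$ on one side and inside $A_{i+1}\setminus A_i$ on the other, so the part of $B_i$ that is not already dictated has size at most $|A_i\setminus A_{i-1}|=r^{\sigma+i-2}(r-1)$, which is far smaller than $|A_{i+1}|$.

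Then I would multiply these estimates over $i=1,\dots,\delta-1$ (together with the factor $r^{\sigma}$ for $\beta$), take logarithms, and collapse the resulting sum by telescoping, just as the analogous product was collapsed in the proof of Lemma~\ref{lem:boundtsetgen}. Because $|A_i|=r^{\sigma+i-1}$ grows geometrically, the contribution of the last index should dominate; the goal is to show the total is at most $|A_\delta|-|A_{\delta-1}| = r^{\sigma+\delta-1}-r^{\sigma+\delta-2} = r^{\sigma+\delta-2}(r-1)$, which gives $|\phi(N(\mathcal A))|\le 2^{r^{\sigma+\delta-2}(r-1)}$ as claimed. This bound is what is substituted into Theorem~\ref{thm:homoupp} in the proof of Theorem~\ref{thm:uppbndexp}.

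The hard part will be exactly this telescoping step --- pinning the exponent to $r^{\sigma+\delta-2}(r-1)$. If one bounds each factor above crudely by $2^{|A_{i+1}|-|A_{i-1}|}$ and telescopes precisely as in Lemma~\ref{lem:boundtsetgen}, the exponent obtained is $|A_{\delta-1}|+|A_\delta| = r^{\sigma+\delta-2}(r+1)$: this already improves on Lemma~\ref{lem:boundtsetgen}'s exponent $2r^{\sigma+\delta-1}$, but it overshoots the stated bound by an additive $2r^{\sigma+\delta-2}$. Removing this slack demands that one genuinely exploit both the size equalities $|B_i|=|A_i|$ and the nesting $B_{i-1}\subseteq B_i$: the idea is to parametrize each $B_i$ by its symmetric difference with $A_i$, to observe that once $B_{i-1}$ has been fixed this symmetric difference is confined to a set of size on the order of $r^{\sigma+i-2}(r-1)$, and to arrange the per-index accounting so that these small increments telescope down to the single top term $r^{\sigma+\delta-2}(r-1)$. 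Making that amortization airtight --- and, in particular, verifying that it collapses to that exact term and not to a slightly larger constant --- is where essentially all of the difficulty lies; the reduction in the first paragraph and the level-by-level count in the second are routine.
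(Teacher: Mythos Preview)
Your ``crude'' telescoping bound with exponent $r^{\sigma+\delta-2}(r+1)$ is exactly the paper's proof, and it is the intended bound: the exponent $r^{\sigma+\delta-2}(r-1)$ in the lemma's statement is a typo for $r^{\sigma+\delta-2}(r+1)$. The remark immediately following the lemma rewrites the bound as $2^{sz(\mathcal A)\cdot (r+1)/r}$, which with $sz(\mathcal A)=r^{\sigma+\delta-1}$ is $2^{r^{\sigma+\delta-2}(r+1)}$; and the proof of Theorem~\ref{thm:uppbndexp} sets $\rho = 2^{r^{\sigma+\delta-2}(r+1)}$ and uses that value throughout. So your first two paragraphs already reproduce the paper's argument in full.

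The sharper bound you are trying to reach in your last two paragraphs is in fact false. Take $r=2$, $\sigma=2$, $\delta=2$, so that $|A_1|=4$ and $|A_2|=8$. An element of $\phi(N(\mathcal A))$ is a pair $\langle\beta,B_1\rangle$ with $\beta\in A_1\setminus\{\alpha\}$, $\{\alpha,\beta\}\subseteq B_1\subseteq A_2$, and $|B_1|=4$; every such pair extends to a genuine neighbor because $|A_1\cup B_1|\le 8 = |B_2|$ always leaves room for a valid $B_2$. There are $3$ choices for $\beta$ and $\binom{6}{2}=15$ choices for $B_1$, giving $|\phi(N(\mathcal A))|=45$, whereas the stated bound would be $2^{r^{\sigma}(r-1)}=2^{4}=16$. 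So the amortization you outline cannot succeed; stop at the $(r+1)$ exponent and you are done.
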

\begin{proof}
  Exactly as in Lemma \ref{lem:boundtsetgen},
using the fact that for all $\langle \beta, B_1, \ldots, B_{\delta-1} \rangle \in \phi(N(\mathcal A))$, we must have $\beta \in A_1$, $\alpha \in B_1$, $B_1 \subseteq A_2$, and $A_{i-1} \subseteq B_i \subseteq A_{i+1}$ for $2 \leq i \leq \delta-1$, the number of elements in $\phi(N(\mathcal A))$ is at most
\begin{equation}
  2^{|A_1| + \cdots + |A_\delta| - (|A_1| + \cdots + |A_{\delta-2}|)} = 2^{r^{\sigma + \delta-2}(1+r)}.\nonumber
\end{equation}
\end{proof}
Note that the bound in Lemma \ref{lem:boundtset} can be written as $| \phi(N(\MA)) | \leq 2^{sz(\MA) \cdot \frac{r+1}{r}}$ which is better than the bound $|\phi(N(\MA)) | \leq 2^{2\cdot sz(\MA) }$ from Lemma \ref{lem:boundtsetgen}. 

\begin{proof}[Proof of Theorem \ref{thm:uppbndexp}]
  We use induction on $\delta$. For the base case $\delta = 0$, we have that $W_r(m,\sigma,\delta) = K_m$, the complete graph on $m$ vertices. Then clearly $\chi(K_m) = m = \log^{(0)}m$.

  Now suppose the result is true for $\delta-1$. 
  Let $G = W_r(m,\sigma,\delta-1)$, so that $\chi(G) \leq 2^{2^{2+r^{\sigma+\delta-2}(r+1)}} \cdot \log^{(2\delta-2)}m$. Let $H = W_r(m,\sigma,\delta)$. Next, (\ref{eq:definephi}) defines a graph homomorphism $\phi : H \ra G$, by restricting to the induced subgraph $H = W_r(m,\sigma,\delta)$ of $U(m,r^{\sigma + \delta-1}, \delta)$. 
By Lemma \ref{lem:boundtset}, for each pair $(v,S) \in \MF_{G,\phi}$, we have that $|S| \leq 2^{r^{\sigma+\delta-2}(r+1)}$.

Let $\rho = 2^{r^{\sigma + \delta-2}(r+1)}$. By Theorem \ref{thm:homoupp}, we have that $$\chi(W_r(m,\sigma,\delta)) \leq \lceil \rho 2^\rho \cdot  \log \log \chi(W_r(m,\sigma,\delta-1))\rceil.$$
If $2^{2^{2+r^{\sigma+\delta-3}(r+1)}} \leq \log^{(2\delta-2)}m$, then the above equation implies that
\begin{eqnarray*}
  \chi(W_r(m,\sigma,\delta)) &\leq& \lceil \rho 2^\rho (1 + \log^{(2\delta)}m)\rceil \\
  &\leq&
         2^{2 + r^{\sigma+\delta-2}(r+1)} \cdot 2^{2^{r^{\sigma+\delta-2}(r+1)}} \cdot \log^{(2\delta)}m  \\
  &\leq& 2^{2^{2+r^{\sigma+\delta-2}(r+1)}} \cdot \log^{(2\delta)}m.
\end{eqnarray*}

If $2^{2^{2+r^{\sigma+\delta-3}(r+1)}} > \log^{(2\delta-2)}m$, then since the function $\frac{\log \log n}{n}$ is a decreasing function of integers $n$ for $n \geq 4$, we have that
$$
\frac{\log^{(2\delta)}m}{\log^{(2\delta-2)}m} > \frac{2 + r^{\sigma+\delta-3}(r+1)}{2^{2^{2+r^{\sigma+\delta-3}(r+1)}}},
$$
which implies that
$$
\frac{\log^{(2\delta)}m \cdot 2^{2^{2+r^{\sigma+\delta-2}(r+1)}}}{\log^{(2\delta-2)}m \cdot 2^{2^{2+r^{\sigma+\delta-3}(r+1)}}} \geq \frac{\left(2+r^{\sigma+\delta-3}(r+1)\right) \cdot 2^{2^{2+r^{\sigma+\delta-2}(r+1)}}}{\left(2^{2^{2+r^{\sigma+\delta-3}(r+1)}}\right)^2} > 1,
$$
where the last inequality follows from $2 + r^{\sigma + \delta-2} (r+1) \geq 3 + r^{\sigma + \delta - 3}(r+1)$,
so we may use the trivial bound
$$
\chi(W_r(m,\sigma,\delta)) \leq \chi(W_r(m,\sigma,\delta-1)) \leq \log^{(2\delta - 2)}m \cdot 2^{2^{2 + r^{\sigma + \delta -3}(r+1)}}\leq 2^{2^{2+r^{\sigma+\delta-2}(r+1)}} \cdot \log^{(2\delta)}m.
$$
\end{proof}



\end{document}